\date{ }
\numberwithin{equation}{section}
\numberwithin{figure}{section}
\numberwithin{table}{section}
\theoremstyle{plain}
\newtheorem{thm}{Theorem}[section]
\newtheorem{lem}{Lemma}[section]
\theoremstyle{remark}
\newtheorem{rem}{Remark}[section]
\newcommand{\bu}{{\mathbf u}}
\def\bu{\mathbf{u}}
\def\bv{\mathbf{v}}
\def\bn{\mathbf{n}}
\def\bx{\mathbf{x}}
\def\ba{\mathbf{a}}
\def\bI{\mathbf{I}}
\def\half{\frac{1}{2}}
\def\bu{\mathbf{u}}
\def\bv{\mathbf{v}}
\def\bw{\mathbf{w}}
\def\bV{\mathbf{V}}
\def\bH{\mathbf{H}}
\def\cG{\mathcal{G}}
\def\cE{\mathcal{E}}
\newcommand{\ben}{\begin{eqnarray}}
\newcommand{\een}{\end{eqnarray}}
\newcommand{\beq}{\begin{equation}}
\newcommand{\eeq}{\end{equation}}
\newcommand{\bea}{\begin{array}}
\newcommand{\eea}{\end{array}}
\newcommand{\bef}{\begin{figure}[H]}
\newcommand{\eef}{\end{figure}}
\newcommand{\bse}{\begin{subequations}}
\newcommand{\ese}{\end{subequations}}
\newtheorem{scheme}{Scheme}[section]
\begin{document}
\title{Second-order Decoupled Energy-stable Schemes for Cahn-Hilliard-Navier-Stokes equations}
\author[J. Zhao]{
Jia Zhao\affil{1}\comma \corrauth}
\address{\affilnum{1}\ Department of Mathematics \& Statistics, Utah State University, Logan, UT, USA }
\email{ {\tt jia.zhao@usu.edu.} (J.~Zhao)}

\begin{abstract}
The Cahn-Hilliard-Navier-Stokes (CHNS) equations represent the fundamental building blocks of hydrodynamic phase-field models for multiphase fluid flow dynamics. Due to the coupling between the Navier-Stokes equation and the Cahn-Hilliard equation, the CHNS system is non-trivial to solve numerically. Traditionally, a numerical extrapolation for the coupling terms is used. However, such brute-force extrapolation usually destroys the intrinsic thermodynamic structures of this CHNS system. This paper proposes a new strategy to reformulate the CHNS system into a constraint gradient flow formation. Under the new formulation, the reversible and irreversible structures are clearly revealed. This guide us to propose operator splitting schemes. The operator splitting schemes have several advantageous properties.
First of all, the proposed schemes lead to several decoupled systems in smaller sizes to be solved at each time marching step. This significantly reduces the computational costs. Secondly, the proposed schemes still guarantee the thermodynamic laws of the CHNS system at the discrete level. These are known as structure-preserving schemes. This structure-preserving property is desired. It ensures the thermodynamic laws, the accuracy and stability for the numerical solutions. In addition, unlike the recently populated IEQ or SAV approach using auxiliary variables, our resulting energy laws are formulated in the original variables. This is a significant improvement, as the modified energy laws with auxiliary variables sometimes deviate from the original energy law.  Our proposed framework lays out a foundation to design decoupled and energy stable numerical algorithms for hydrodynamic phase-field models.
Furthermore, given different splitting steps, various numerical algorithms can be obtained, making this framework rather general. The proposed numerical algorithms are implemented. Their second-order accuracy in time is verified numerically. Some numerical examples and benchmark problems are calculated to verify the effectiveness of the proposed schemes.
\end{abstract}

\ams{}
\keywords{Phase Field; Decoupled Scheme; Energy Stable; Cahn-Hilliard-Navier-Srtokes; Hydrodynamics.}

\maketitle

\section{Background}

Multiphase interfacial problems are ubiquitously in nature and industrial processes. As one of the most widely used approaches, phase field methods/models \cite{Cahn&H1958} have been broadly utilized in various fields to investigate interfacial problems. The phase-field method's major advantage is that the evolving interface is captured intrinsically, instead of explicitly like other interface tracking methods. This dramatically simplifies the modeling and computational processes. Mainly, one needs to introduce phase-field variables that could either be the volume fractions or labels for the phases. Then continuous partial differential equations (PDEs) are proposed in the computational domain. Once the PDEs are solved, the interfaces will be retrieved through level sets of the phase variables. The phase-field models are also known as diffuse interface models, since an artificial diffuse interface is usually introduced to regularize the phase variables which are the  PDEs' solutions. Due to its simplicity in the theoretical formulation and numerical implementation, the phase-field method has been widely used in the fields where multiple material phases are involved.
When the material systems are fluids, the velocity fields shall be considered since the interactions between kinetics energy and free energy are not ignorable. Thus, the hydrodynamic equations for fluid flows will be proposed along with the phase-field equations for the phase variables. The well-known Cahn-Hilliard-Navier-Stokes equations are the fundamental system  for the hydrodynamic phase-field models. 
It has been widely used in interfacial problems for incompressible two fluid mixture. 

Consider the domain $\Omega$ and time $t \in (0, T]$, and denote $\Omega_T = \Omega \times (0, T]$. Here we use $\phi \in [-1, 1]$ as the phase variable, with $\phi=1$ to label one phase, $\phi=-1$ to label the other phase, and $\phi \in (-1, 1)$ representing the interface.
The total energy of the two phase fluid-mixture system $\mathcal{E}$ include the Helmholtz free energy $F$ and the kinetic energy $E$, i.e.
\beq \label{eq:energy}
\cE(\bu, \phi)  = F(\phi) + E(\bu), \quad F(\phi)= \int_\Omega  \gamma \Big( \frac{\varepsilon}{2} | \nabla \phi |^2 + \frac{1}{\varepsilon} f(\phi)  \Big) d\bx, \quad E(\bu) = \int_\Omega \frac{\rho}{2}|\bu|^2d \bx.
\eeq
where $\gamma$ is the surface tension between the two fluid phases, 
$\varepsilon$ is an artificial parameter controlling the interfacial thickness. $f(\phi)$ is the bulk free energy for the two phase material. $\bu$ the volume-averaged velocity, and $\rho$ is the volume-averaged density. In this paper, we assume the fluid mixture is incompressible, and both phases have the same density.  The double-well potential
\beq \label{eq:bulk_potential}
f(\phi) = \frac{1}{4} (\phi^2-1)^2
\eeq 
will be used as the bulk potential in the rest of this paper. Other cases, such as the Flory-Huggins bulk potential, could be treated similarly with our proposed algorithms in this paper.
Notice that our proposed methodology can be easily applied to compressible or quasi-compressible CHNS models. These topics will be investigated in our later research. but will not be pursued in this paper.

Then, the Cahn-Hilliard-Navier-Stokes (CHNS) equations are proposed as
\beq \label{eq:CHNS}
\left\{
\bea{l}
\rho \Big( \partial_t \bu + \bu \cdot \nabla \bu \Big) = - \nabla p + \eta \nabla^2 \bu -   \phi \nabla \mu,  \quad   (\bx, t) \in \Omega_T, \\
\nabla \cdot \bu  =  0,  \quad  (\bx, t) \in \Omega_T,  \\
\partial_t \phi +\nabla \cdot (\bu \phi) =  \nabla \cdot( M(\phi)  \nabla  \mu), \quad  (\bx, t) \in \Omega_T, \\
\mu = - \gamma \varepsilon \Delta \phi + \frac{\gamma}{\varepsilon} f'(\phi), \quad (\bx, t) \in \Omega_T, \\
\eea
\right.
\eeq
where $\eta$ is the viscosity parameter, $M(\phi) \geq 0$ is the mobility operator, $p$ is the hydrodynamic pressure, and $\mu = \frac{\delta F}{\delta \phi}$ is the chemical potential. 
The boundary conditions are not unique. In this paper, if not otherwise specified, we focus on the physical boundary conditions:
\beq  \label{eq:CHNS-boundary}
\bu(\bx, t) = 0, \quad \nabla \mu(\bx, t) \cdot \bn = 0 , \quad \nabla \phi(\bx, t) \cdot \bn =0, \quad (\bx, t) \in \partial \Omega \times (0, T],
\eeq 
with $\bn$ the outward normal vector at the boundary.
The CHNS system  in \eqref{eq:CHNS}-\eqref{eq:CHNS-boundary} is known to satisfy the second law of thermodynamics, with the energy dissipation rate calculated as
\beq \label{eq:energy-law-continous}
\frac{d\cE}{dt} = - \int_\Omega \Big( |\sqrt{M(\phi)}\nabla \mu|^2 + \eta |\nabla \bu|^2  \Big) d\bx.
\eeq 

One principle in developing numerical algorithms for solving the CHNS system is to guarantee that the numerical solutions also satisfy the energy law in \eqref{eq:energy-law-continous}. A numerical scheme also guarantees the monotone property of the energy is known as an energy stable scheme. The energy-stable scheme usually can allow large time marching steps while preserving accuracy and stability.
Given the significant role it played in the phase-field models, the CHNS system has drawn a considerable amount of attention. Many seminal works have been published investigating different aspects. Here we briefly review some relevant results in numerical analysis. There are several pieces of existing works on fully discrete schemes for the hydrodynamic phase-field model or its simplified versions  \cite{WiseJSC2010, Chen&S2016, Guo&Lin&Lowengrub&Wise2017, Han&WangJCP2015}, all of them are either only first-order in time or nonlinear. In comparison, our decoupled schemes can be more efficient in implementation and computational costs.
Han and Wang introduce a second-order velocity projection method for the CHNS system in \cite{Han&WangJCP2015}. The resulted numerical scheme is second-order accurate in time. This is one of the earliest works to introduce second-order numerical algorithms for the CHNS system while preserving energy stability.    Later Gong et al. manage to improve the scheme by using the energy quadratization (EQ) idea, such that only linear systems need to be solved in each time step \cite{Gong&Zhao&WangSISC2}.  Another linear and energy stable scheme is introduced in \cite{Chen&ZhaoJCP2020}, where a stabilized leap-frog type time-marching strategy is adapted. However, the velocity field and phase variables have to be solved simultaneously in all the schemes just mentioned. This requires either a Newton iteration method or a fixed-point iteration method. This drawback shall not be ignored since the solution's existence and uniqueness for the iteration method usually have substantial requirements on the time step, making the unconditional stability of the original numerical algorithms less appealing.  We also note that some second-order (linear) energy stable schemes have been developed for thermodynamic phase-field equations in recently years \cite{Gomez&HughesJCP2011, Lee&ShinCMAME2017, SAV-2, WangChengCMS2015, Feng&Wang&Wise&Zhang2017}, which may potentially be applicable to hydrodynamic phase-field models.  

Due to the coupling between the hydrodynamic equation and the phase-field equation, it is desirable to develop accurate and efficient numerical algorithms that can decouple the two equations. It is preferred that, in each time marching step, only a smaller size of problems shall be solved sequentially. This is the primary motivation of this paper. Seeking decoupled numerical algorithms that can solve the velocity field $\bu$ and phase variable $\phi$ independently has been an active research topic. Minjeaud realized a first-order relaxation on the velocity field could uncouple the velocity field and phase variable such that decoupled numerical algorithms for triphasic Cahn-Hilliard-Navier-Stoke model can be developed \cite{MinjeaudS13}. This idea has been further populated to investigate various hydrodynamic phase field models \cite{Shen&Yang2014SISC, Zhao&Yang&Shen&WangJCP2016, Zhao&Wang&YangJSC2017}, for which one can get a linearly decoupled scheme such that the velocity field, phase variable, and pressure can be solved sequentially. Each of the sub-problems is an elliptic-type equation so that fast and efficient solvers can be applied. But due to the unavoidable  first-order modification on the velocity field, the accuracy is restricted to first-order, which is not much attractive for practical application. In parallel, the idea of the scalar auxiliary variable (SAV) has been used to decouple the phase field and hydrodynamic systems \cite{Li&ShenDecoupled2020, YangDecoupled2021,YangDecoupled-1,YangDecoupled-2}, from which second-order decoupled schemes could be developed. However, such strategies usually introduce new variables so that discrete energy laws are modified using the auxiliary variables, making the connections with the energy law with original variables less clear.

In this paper, we come up with a novel approach to overcome all these difficulties mentioned above. Mainly, we propose an elegant strategy to develop decoupled and energy stable numerical algorithms for the CHNS system in a confined geometry subject to physical boundary conditions.  Instead of designing algorithms by a trial-and-error approach, we first reformulate the CHNS system into a constraint gradient flow system. This reformulation provides insights into the numerical algorithm design. With the constraint gradient flow formation, we propose several variants of second-order operator splitting schemes. All the schemes uncouple the velocity field and phase variables so that only smaller systems need to be solved in each time marching step. Besides the efficiency, all the schemes are rigorously shown to be energy stable, i.e., satisfy the discrete energy law.  We emphasize that our proposed schemes hold the discrete energy laws in the original variables, which differs from the EQ or SAV approaches. 

The rest of the paper is organized as follows. We first reformulate the CHNS system into a constraint gradient flow formulation in Section \ref{sect:model-reformulation}. Then we discretize the space with a second-order finite difference on staggered grids in Section \ref{sect:space}. Afterward, we introduce the second-order splitting techniques to design second-order accurate in time and decoupled numerical algorithms for solving the CHNS system in Section \ref{sect:time}. The energy stable property of the proposed schemes is also rigorously proved. Then, in Section \ref{sect:results}, we present the time-step convergence tests and several benchmark problems. The numerical results highlight the effectiveness of our proposed decoupled numerical schemes. In the end, we give a brief conclusion.

\section{Model Reformulation of Cahn-Hilliard-Navier-Stokes equations} \label{sect:model-reformulation}
\subsection{Model reformulation}
First of all, we illustrate the reformation of incompressible CHNS system. We emphasize that the pressure $p$ in \eqref{eq:CHNS} is a Lagrangian multiplier to enforce the in-compressibility of the velocity field $\bu$. With this in mind, we can reformulate the CHNS system into a constraint gradient flow form. This will guide us in designing decoupled numerical algorithms.  We follow the notations in \cite{OttingerPRE1997A,OttingerPRE1997B}.
Let $P_\bu$ be a functional space defined by
\beq
P_\bu = \left\{ \bu(x, t): \bu \in \bV; \quad \nabla \cdot \bu =0  \mbox{ in } \Omega, \quad \bu = 0 \mbox{ on } \partial \Omega   \right\},
\eeq 
with $\bV$ being the space of three-dimensional vector fields. $\Pi_\bu$ denotes a projection operator defined as
\beq \label{eq:projection-operator}
\Pi_\bu(\ba) =
\left\{
\bea{l}
\ba - \nabla p, \quad \mbox{in } \Omega / \partial \Omega, \\
0 \quad \mbox{ on } \partial \Omega,
\eea 
\right.
\eeq
where $p$ satisfies a Poisson condition with a Neumann-type boundary condition, i.e. 
$$
\left\{
\bea{l}
\Delta p = \nabla \cdot \ba, \quad  \mbox{ in } \Omega, \\
\frac{\partial p}{\partial \bn} = \ba \cdot \bn, \quad  \mbox{ on } \partial \Omega.
\eea 
\right.
$$
With the projection operator in \eqref{eq:projection-operator}, we denote the constraint variational derivative of the kinetic energy in \eqref{eq:energy} with respect to the velocity field as
\beq \label{eq:projection-u}
\frac{\delta E}{\delta \bu} = \Pi_\bu \frac{\partial E}{\partial \bu}.
\eeq

Next, we illustrate the reformulation of the convection term in the Navier-Stokes equation. We rewrite the nonlinear convection term as
\beq \label{eq:reformulation-convection}
B(\bv, \bu) = \frac{1}{2} \Big[  \bv \cdot \nabla \bu + \nabla \cdot (\bv \bu) \Big].
\eeq 
In addition, the skew-symmetric form $B(\bv,\bu)$ induce a trilinear form $b$ defined as \cite{Han&WangJCP2015}
\beq
b(\bv,\bu,\bw) = \Big( B(\bv,\bu), \bw \Big) = \frac{1}{2} \left[ \Big( \bv \cdot \nabla \bu, \bw \Big) - \Big( \bv \cdot \nabla \bw, \bu \Big)  \right],  \forall \bu, \bv, \bw \in \bH_0^1(\Omega).
\eeq 
It follows immediately that 
\beq
b(\bv, \bu, \bu) = 0, \quad \forall \bu, \bv \in \bH_0^1(\Omega).
\eeq 

\begin{rem}
The reformulation of the convection term is not unique. We can also write the convection term in a skew-symmetric form as \cite{Han&WangJCP2015}
\beq 
B(\bv,\bu) = (\bv \cdot \nabla )\bu + \frac{1}{2} (\nabla \cdot \bv) \bu.
\eeq 
\end{rem}

We can easily see that 
$B(\bu, \bu) = \bu \cdot \nabla \bu$, given that $\nabla \cdot \bu =0$. Hence, the incompressible CHNS system in \eqref{eq:CHNS}  can be rewritten as
\beq
\rho (\partial_t \bu + B(\bu, \bu))  =  -\nabla p + \eta \Delta \bu - \phi \nabla \mu.
\eeq 

With the constraint variation in \eqref{eq:projection-u} and reformulation of the convection term in \eqref{eq:reformulation-convection}, we are ready to rewrite the CHNS equation in \eqref{eq:CHNS} as a constraint gradient flow form
\beq  \label{eq:CHNS-gradient-flow}
\Lambda \partial_t \Psi = \cG\frac{\delta E}{\delta \Psi}, \quad \Lambda = 
\begin{bmatrix}
\rho  & 0 \\
0 &  1 \\
\end{bmatrix}, \quad 
\Psi = \begin{bmatrix}
\bu \\ \phi 
\end{bmatrix}, \quad \cG= \cG_a + \cG_s,
\eeq 
with proper boundary conditions and initial values.
Here $\frac{\delta E}{\delta \bu}: =\Pi_\bu \frac{\partial E}{\partial \bu}$ is defined using the projection operator as shown in \eqref{eq:projection-u}. The mobility operator $\cG=\cG_a + \cG_s$ is defined as
\beq \label{eq:Splitting-Case1}
\cG_a = 
\begin{pmatrix}
0 & -\phi \nabla \bullet  \\
-\nabla \cdot (\bullet \phi) & 0 \\
\end{pmatrix}, 
\quad 
\cG_s =
\begin{pmatrix}
\eta \Delta \bullet - B(\bu, \bullet ) & 0 \\
0 &  \nabla \cdot ( M(\phi) \nabla \bullet)
\end{pmatrix}.
\eeq 
Here $\cG_a$ controls the reversible dynamics, representing the energy exchanges between the kinetic energy and the Helmholtz free energy, and $\cG_s$ controls the irreversible dynamics, representing the energy dissipation.

\begin{rem}
This constraint gradient flow reformulation is not limited to the Cahn-Hilliard-Navier-Stokes system. Other thermodynamic consistent hydrodynamic models could be reformulated in a similar manner. We will not elaborate on it due to space limitations.
\end{rem}

\section{Spatial Discretization on Staggered Grids} \label{sect:space}
In this section, we present the spatial discretization of the CHNS system in \eqref{eq:CHNS} with physical boundary conditions in \eqref{eq:CHNS-boundary}.

\subsection{Notations  for spatial discretization}\label{sect:Notations}

To simplify the presentation, we introduce some finite difference notations for spatial discretization. Although these notations can also be found in \cite{WiseJSC2010,Shen&Wang&Wise2012,Wise&Kim&LowengrubJCP2007,Chen&S2016,Gong&Zhao&WangSISC2},  we summarize them in this section to make this paper reader-friendly.  

We consider a rectangular spatial domain  $\Omega = [0,L_x]\times[0,L_y]$  with $L_x$ and $L_y$ two positive numbers. The domain is discretized into uniform rectangular meshes with mesh size $h_x = \frac{L_x}{N_x}$ and $h_y = \frac{L_y}{N_y}$. Here $N_x$ and $N_y$ are two positive integers. We define the following 1D sets for grid points
\ben
&& E_x = \{x_{i+\frac{1}{2}}|i=0,1,\ldots,N_x\},\quad C_x = \{x_i|i=1,2,\ldots,N_x\},\quad C_{\overline{x}} = \{x_i|i=0,1,\ldots,N_x+1\},\nonumber\\
&& E_y = \{y_{j+\frac{1}{2}}|j=0,1,\ldots,N_y\},\quad C_y = \{y_j|j=1,2,\ldots,N_y\},\quad C_{\overline{y}} = \{y_j|j=0,1,\ldots,N_y+1\},\nonumber
\een
where $x_l = (l-\frac{1}{2})h_x,~y_l = (l-\frac{1}{2})h_y,$ and $l$ can take on either integer or half-integer values. $E_x$ is called a uniform partition of $[0,L_x]$ of size $N_x$, and its elements are called edge-centered points. The elements of $C_x$ and $C_{\overline{x}}$ are called cell-centered points. The two points belonging to $C_{\overline{x}}\setminus C_x$ are called ghost points. Analogously, the set $E_y$ is a uniform partition of $[0,L_y]$ of size $N_y$, called edge-centered points, and $C_y$ and $C_{\overline{y}}$ contain the cell-centered points of the interval $[0,L_y]$.

We define the following discrete function spaces
\begin{eqnarray*}
&& \mathcal{C}_{x\times y} = \{\phi:C_x\times C_y\rightarrow \mathbb{R}\},\quad \mathcal{C}_{\overline{x}\times y} = \{\phi:C_{\overline{x}}\times C_y\rightarrow \mathbb{R}\},\quad \mathcal{C}_{x\times \overline{y}} = \{\phi:C_x\times C_{\overline{y}}\rightarrow \mathbb{R}\},\\
&& \mathcal{C}_{\overline{x}\times \overline{y}} = \{\phi:C_{\overline{x}}\times C_{\overline{y}}\rightarrow \mathbb{R}\},\quad \mathcal{E}^{ew}_{x\times y} = \{u:E_x\times C_y\rightarrow \mathbb{R}\},\quad \mathcal{E}^{ew}_{x\times \overline{y}} = \{u:E_x\times C_{\overline{y}}\rightarrow \mathbb{R}\},\\
&& \mathcal{E}^{ns}_{x\times y} = \{v:C_x\times E_y\rightarrow \mathbb{R}\},\quad \mathcal{E}^{ns}_{\overline{x}\times y} = \{v:C_{\overline{x}}\times E_y\rightarrow \mathbb{R}\},\quad \mathcal{V}_{x\times y} = \{f:E_x\times E_y\rightarrow \mathbb{R}\}.
\end{eqnarray*}

Throughout this paper, we denote the cell-centered, edge-centered and vertex-centered discrete functions as follows:
\begin{eqnarray*}
\textrm{cell~centered~functions:}&&\phi,\psi,\mu \in \mathcal{C}_{x\times y} \cup \mathcal{C}_{\overline{x}\times y} \cup \mathcal{C}_{x\times \overline{y}} \cup \mathcal{C}_{\overline{x}\times \overline{y}},\\
\textrm{east~west~edge~centered~functions:}&&u,r \in \mathcal{E}^{ew}_{x\times y} \cup \mathcal{E}^{ew}_{x\times \overline{y}},\\
\textrm{north~south~edge~centered~functions:}&&v,w \in \mathcal{E}^{ns}_{x\times y} \cup \mathcal{E}^{ns}_{\overline{x}\times y},\\
\textrm{vertex~centered~functions:}&&f,g\in \mathcal{V}_{x\times y}.
\end{eqnarray*}

We define the discrete function spaces with homogeneous Dirichlet boundary conditions as follows:
\begin{eqnarray*}
&& \mathcal{E}^{ew0}_{x\times y} = \{u\in \mathcal{E}^{ew}_{x\times y} \cup \mathcal{E}^{ew}_{x\times \overline{y}}\big|u_{\frac{1}{2},j}=u_{N_x+\frac{1}{2},j}=0,~j=1,2,\ldots,N_y\},\\
&& \mathcal{E}^{ew0}_{x\times \overline{y}} = \{u\in \mathcal{E}^{ew}_{x\times \overline{y}}\big|u_{\frac{1}{2},j}=u_{N_x+\frac{1}{2},j}=0,~j=0,1,\ldots,N_y+1\},\\
&& \mathcal{E}^{ns0}_{x\times y} = \{v\in \mathcal{E}^{ns}_{x\times y} \cup \mathcal{E}^{ns}_{\overline{x}\times y}\big|v_{i,\frac{1}{2}}=v_{i,N_y+\frac{1}{2}}=0,~i=1,2,\ldots,N_x\},\\
&& \mathcal{E}^{ns0}_{\overline{x}\times y} = \{v\in \mathcal{E}^{ns}_{\overline{x}\times y}\big|v_{i,\frac{1}{2}}=v_{i,N_y+\frac{1}{2}}=0,~i=0,1,\ldots,N_x+1\},\\
&& \mathcal{V}^{ew0}_{x\times y} = \{f\in\mathcal{V}_{x\times y}\big|f_{\frac{1}{2},j+\frac{1}{2}}=f_{N_x+\frac{1}{2},j+\frac{1}{2}}=0,~j=0,1,\ldots,N_y\},\\
&& \mathcal{V}^{ns0}_{x\times y} = \{f\in\mathcal{V}_{x\times y}\big|
f_{i+\frac{1}{2},\frac{1}{2}}=f_{i+\frac{1}{2},N_y+\frac{1}{2}}=0,~i=0,1,\ldots,N_x\},\\
&& \mathcal{V}^0_{x\times y} = \mathcal{V}^{ew0}_{x\times y}\cap \mathcal{V}^{ns0}_{x\times y}.
\end{eqnarray*}
Note that $\mathcal{E}^{ew0}_{x\times \overline{y}}\subset\mathcal{E}^{ew0}_{x\times y}$ and $\mathcal{E}^{ns0}_{\overline{x}\times y}\subset\mathcal{E}^{ns0}_{x\times y}.$
The east-west-edge-to-center average and difference operators are defined as
$a_x,~d_x:\mathcal{E}^{ew}_{x\times \overline{y}}\cup \mathcal{V}_{x\times y}\rightarrow \mathcal{C}_{x\times \overline{y}} \cup \mathcal{E}^{ns}_{x\times y}$  in component-wise forms:
\begin{eqnarray*}
&& a_x u_{i,j} = \frac{1}{2}(u_{i+\frac{1}{2},j}+u_{i-\frac{1}{2},j}),\quad  d_x u_{i,j} = \frac{1}{h_x}(u_{i+\frac{1}{2},j}-u_{i-\frac{1}{2},j}),\quad a_x u, d_x u\in \mathcal{C}_{x\times \overline{y}},\\
&& a_x f_{i,j+\frac{1}{2}} = \frac{1}{2}(f_{i+\frac{1}{2},j+\frac{1}{2}}+f_{i-\frac{1}{2},j+\frac{1}{2}}),\quad  d_x f_{i,j+\frac{1}{2}} = \frac{1}{h_x}(f_{i+\frac{1}{2},j+\frac{1}{2}}-f_{i-\frac{1}{2},j+\frac{1}{2}}),\quad a_x f, d_x f\in \mathcal{E}^{ns}_{x\times y}.
\end{eqnarray*}
The north-south-edge-to-center average and difference operators are defined as $a_y,~d_y:\mathcal{E}^{ns}_{\overline{x}\times y}\cup \mathcal{V}_{x\times y}\rightarrow \mathcal{C}_{\overline{x}\times y} \cup \mathcal{E}^{ew}_{x\times y}$  in component-wise forms:
\begin{eqnarray*}
&& a_y v_{i,j} = \frac{1}{2}(v_{i,j+\frac{1}{2}}+v_{i,j-\frac{1}{2}}),\quad  d_y v_{i,j} = \frac{1}{h_y}(v_{i,j+\frac{1}{2}}-v_{i,j-\frac{1}{2}}),\quad a_y v, d_y v\in \mathcal{C}_{\overline{x}\times y},\\
&& a_y f_{i+\frac{1}{2},j} = \frac{1}{2}(f_{i+\frac{1}{2},j+\frac{1}{2}}+f_{i+\frac{1}{2},j-\frac{1}{2}}),\quad  d_y f_{i+\frac{1}{2},j} = \frac{1}{h_y}(f_{i+\frac{1}{2},j+\frac{1}{2}}-f_{i+\frac{1}{2},j-\frac{1}{2}}),\quad a_y f, d_y f\in \mathcal{E}^{ew}_{x\times y}.
\end{eqnarray*}
The center-to-east-west-edge average and difference operators are defined as $A_x,~D_x:\mathcal{C}_{\overline{x}\times \overline{y}}\cup \mathcal{E}^{ns}_{\overline{x}\times y}\rightarrow \mathcal{E}^{ew}_{x\times \overline{y}} \cup \mathcal{V}_{x\times y}$    in component-wise forms:
\begin{eqnarray*}
&& A_x \phi_{i+\frac{1}{2},j} = \frac{1}{2}(\phi_{i+1,j}+\phi_{i,j}),\quad  D_x \phi_{i+\frac{1}{2},j} = \frac{1}{h_x}(\phi_{i+1,j}-\phi_{i,j}),\quad A_x \phi, D_x \phi\in \mathcal{E}^{ew}_{x\times \overline{y}},\\
&& A_x v_{i+\frac{1}{2},j+\frac{1}{2}} = \frac{1}{2}(v_{i+1,j+\frac{1}{2}}+v_{i,j+\frac{1}{2}}),\quad  D_x v_{i+\frac{1}{2},j+\frac{1}{2}} = \frac{1}{h_x}(v_{i+1,j+\frac{1}{2}}-v_{i,j+\frac{1}{2}}),\quad A_x v, D_x v\in \mathcal{V}_{x\times y}.
\end{eqnarray*}
The center-to-north-south-edge average and difference operators are defined as $A_y,~D_y:\mathcal{C}_{\overline{x}\times \overline{y}}\cup \mathcal{E}^{ew}_{x\times \overline{y}}\rightarrow \mathcal{E}^{ns}_{\overline{x}\times y} \cup \mathcal{V}_{x\times y}$ in component-wise forms:
\begin{eqnarray*}
&& A_y \phi_{i,j+\frac{1}{2}} = \frac{1}{2}(\phi_{i,j+1}+\phi_{i,j}),\quad  D_y \phi_{i,j+\frac{1}{2}} = \frac{1}{h_y}(\phi_{i,j+1}-\phi_{i,j}),\quad A_y \phi, D_y \phi\in \mathcal{E}^{ns}_{\overline{x}\times y},\\
&& A_y u_{i+\frac{1}{2},j+\frac{1}{2}} = \frac{1}{2}(u_{i+\frac{1}{2},j+1}+u_{i+\frac{1}{2},j}),\quad  D_y u_{i+\frac{1}{2},j+\frac{1}{2}} = \frac{1}{h_y}(u_{i+\frac{1}{2},j+1}-u_{i+\frac{1}{2},j}),\quad A_y u, D_y u\in \mathcal{V}_{x\times y}.
\end{eqnarray*}
The discrete Laplacian operator $\Delta_h:\mathcal{C}_{\overline{x}\times \overline{y}}\rightarrow \mathcal{C}_{x\times y}$ is defined as
$$\Delta_h \phi = d_x(D_x \phi)+d_y(D_y \phi).$$

We discretize the physical variables that satisfy Neumann boundary conditions at the cell-center and the ones that satisfy Dirichlet boundary conditions at the edge-center. So, the cell-centered functions $\phi,\mu\in \mathcal{C}_{\overline{x}\times \overline{y}}$ satisfy homogeneous Neumann boundary conditions if and only if
\ben
&& \phi_{0,j} = \phi_{1,j},\quad \phi_{N_x,j} = \phi_{N_x+1,j},\quad \mu_{0,j} = \mu_{1,j},\quad \mu_{N_x,j} = \mu_{N_x+1,j},\quad j = 1,2,\ldots,N_y,\label{dis-BD1}\\
&& \phi_{i,0} = \phi_{i,1},\quad \phi_{i,N_y} = \phi_{i,N_y+1},\quad \mu_{i,0} = \mu_{i,1},\quad \mu_{i,N_y} = \mu_{i,N_y+1},\quad i = 0,1,\ldots,N_x+1.\label{dis-BD2}
\een

The velocity $\bu = (u, v)$ (for $u \in\mathcal{E}^{ew}_{x\times \overline{y}},~v \in\mathcal{E}^{ns}_{\overline{x}\times y}$) satisfies the no-slip (Dirichlet) boundary conditions $\bu|_{\Omega}=0$ if and only if
\ben
{u}_{\frac{1}{2},j} = {u}_{N_x+\frac{1}{2},j} = 0,&& ~j = 1,2,\ldots,N_y,\label{dis-BD3}\\
A_y {u}_{i+\frac{1}{2},\frac{1}{2}} = A_y {u}_{i+\frac{1}{2},N_y+\frac{1}{2}} = 0,&& ~i = 0,1,\ldots,N_x,\label{dis-BD4}\\
{v}_{i,\frac{1}{2}} = {v}_{i,N_y+\frac{1}{2}} = 0,&& ~i = 1,2,\ldots,N_x,\label{dis-BD5}\\
A_x {v}_{\frac{1}{2},j+\frac{1}{2}} = A_x {v}_{N_x+\frac{1}{2},j+\frac{1}{2}} = 0,&& ~j = 0,1,\ldots,N_y.\label{dis-BD6}
\een
It is easy to show that
\beq\label{dis-BD}
D_x \phi,D_x \mu,u \in \mathcal{E}^{ew0}_{x\times \overline{y}},~ D_y \phi,D_y \mu,v \in \mathcal{E}^{ns0}_{\overline{x}\times y},~ A_y u ,A_x v \in \mathcal{V}^{0}_{x\times y},~ D_y u \in \mathcal{V}^{ew0}_{x\times y},~ D_x v \in \mathcal{V}^{ns0}_{x\times y}.
\eeq

Based on the above definitions, we define the following discrete 2D weighted inner-products:
\begin{eqnarray*}
&& (\phi,\psi)_2= h_x h_y \sum\limits_{i=1}^{N_x}\sum\limits_{j=1}^{N_y}\phi_{i,j}\psi_{i,j}, \\
&& [u,r]_{ew} = (a_x(u r),1)_2,\quad [v,w]_{ns} = (a_y(v w),1)_2,\quad \langle f,g \rangle_{vc} = \left(a_x\big(a_y(f g)\big),1\right)_2,
\end{eqnarray*}
and the corresponding discrete norms:
$$\|\phi\|_2 = (\phi,\phi)_2^{\frac{1}{2}},\quad \|u\|_{ew} = [u,u]_{ew}^{\frac{1}{2}},\quad \|v\|_{ns} = [v,v]_{ns}^{\frac{1}{2}},\quad \|f\|_{vc} = \langle f,f \rangle_{vc}^{\frac{1}{2}}.$$
Specially, we have
\begin{eqnarray*}
&& [u,r]_{ew} = h_x h_y \sum\limits_{i=1}^{N_x-1}\sum\limits_{j=1}^{N_y}u_{i+\frac{1}{2},j} r_{i+\frac{1}{2},j},~\textrm{if}~ u\in \mathcal{E}^{ew0}_{x\times y},\\
&& [v,w]_{ns} = h_x h_y \sum\limits_{i=1}^{N_x}\sum\limits_{j=1}^{N_y-1}v_{i,j+\frac{1}{2}} w_{i,j+\frac{1}{2}},~\textrm{if}~ v\in \mathcal{E}^{ns0}_{x\times y},\\
&& \langle f,g \rangle_{vc} = h_x h_y \sum\limits_{i=1}^{N_x-1}\sum\limits_{j=1}^{N_y-1}f_{i+\frac{1}{2},j+\frac{1}{2}} g_{i+\frac{1}{2},j+\frac{1}{2}},~\textrm{if}~ f\in \mathcal{V}^{0}_{x\times y}.
\end{eqnarray*}
For the edge-centered function $\bu = (u , v),~u \in \mathcal{E}^{ew}_{x\times \overline{y}}, v \in \mathcal{E}^{ns}_{\overline{x}\times y}$ and the cell-centered function $\phi\in C_{\overline{x}\times \overline{y}},$ we define the following norms
$$\|\bu\|_2 := \sqrt{\|u\|_{ew}^2+\|v\|_{ns}^2}, \quad \|\nabla \phi\|_2:= \sqrt{\|D_x \phi\|_{ew}^2+\|D_y \phi\|_{ns}^2}.$$

Next, we introduce some useful lemmas \cite{Gong&Zhao&WangSISC2}.

\begin{lem}\label{lem-1}
For $\phi\in \mathcal{C}_{\overline{x}\times \overline{y}},~u \in \mathcal{E}^{ew0}_{x\times y},~v \in \mathcal{E}^{ns0}_{x\times y},$ there exist the following identities:
\ben
&& [A_x \phi,u]_{ew} = (\phi,a_x u)_2,\quad [D_x \phi,u]_{ew} + (\phi,d_x u)_2 = 0,\label{SumPart1}\\
&& [A_y \phi,v]_{ns} = (\phi,a_y v)_2,\quad [D_y \phi,v]_{ns} + (\phi,d_y v)_2 = 0.\label{SumPart2}
\een
\end{lem}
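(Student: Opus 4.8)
The plan is to prove each of the four identities by unfolding the definitions of the weighted inner products into explicit double sums, performing a single index shift in one of the sums (a discrete summation-by-parts / Abel summation in the $x$- or $y$-direction), and invoking the homogeneous Dirichlet boundary conditions on $u$ and $v$ to discard the would-be boundary contributions. The four identities split into an $x$-pair \eqref{SumPart1} and a $y$-pair \eqref{SumPart2} that are proved identically up to relabeling, and within each pair the average version ($A_x$, $a_x$) and the difference version ($D_x$, $d_x$) differ only by replacing $\frac12(\phi_{i+1,j}+\phi_{i,j})$ with $\frac{1}{h_x}(\phi_{i+1,j}-\phi_{i,j})$ and tracking one sign; so it suffices to carry out the argument once, carefully.

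Concretely, for the first identity in \eqref{SumPart1} I would start from the right-hand side $(\phi,a_x u)_2 = h_x h_y \sum_{i=1}^{N_x}\sum_{j=1}^{N_y}\phi_{i,j}\cdot\frac12\bigl(u_{i+\frac12,j}+u_{i-\frac12,j}\bigr)$ and split it into two single sums over $i$. In the first sum the term at $i=N_x$ involves $u_{N_x+\frac12,j}$, which vanishes because $u\in\mathcal{E}^{ew0}_{x\times y}$; in the second sum I shift $i\mapsto i+1$, after which the term at $i=0$ involves $u_{\frac12,j}$, again zero by the boundary condition. Both remaining sums then run over $i=1,\dots,N_x-1$, and combining them regroups the summand as $\frac12(\phi_{i,j}+\phi_{i+1,j})\,u_{i+\frac12,j}=(A_x\phi)_{i+\frac12,j}\,u_{i+\frac12,j}$, which is exactly $[A_x\phi,u]_{ew}$ in the reduced form recorded just before the lemma for functions in $\mathcal{E}^{ew0}_{x\times y}$. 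For the second identity in \eqref{SumPart1} the same manipulation applied to $(\phi,d_x u)_2$ produces the summand $-\frac{1}{h_x}(\phi_{i+1,j}-\phi_{i,j})\,u_{i+\frac12,j}$, i.e.\ $-[D_x\phi,u]_{ew}$, giving the stated cancellation $[D_x\phi,u]_{ew}+(\phi,d_x u)_2=0$. The identities in \eqref{SumPart2} follow verbatim by interchanging the roles of $x$ and $y$, of $u$ and $v$, and of the $ew$- and $ns$-inner products, using instead $v\in\mathcal{E}^{ns0}_{x\times y}$ and the vanishing of $v_{i,\frac12}$ and $v_{i,N_y+\frac12}$.

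There is no genuine obstacle here; the only point requiring care is the bookkeeping of the staggered indices — keeping straight that $\phi$ lives on cell centers $i=1,\dots,N_x$ while $u$ lives on edges $i+\frac12$, so that the two pieces of the split sum have mismatched ranges that align only after the index shift, and that it is precisely the boundary values $u_{\frac12,j}$, $u_{N_x+\frac12,j}$ (resp.\ $v_{i,\frac12}$, $v_{i,N_y+\frac12}$) that must vanish for the end terms to drop. One should also note that the reduced summation formulas for $[\cdot,\cdot]_{ew}$ and $[\cdot,\cdot]_{ns}$ quoted just above the lemma are what make the final comparison immediate; alternatively one may keep the full ranges $i=1,\dots,N_x$ throughout and simply observe that the extra end terms carry the factor $u_{\frac12,j}=u_{N_x+\frac12,j}=0$.
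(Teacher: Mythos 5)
Your proof is correct. Note that the paper itself does not prove this lemma --- it imports it from the cited reference \cite{Gong&Zhao&WangSISC2} --- and your argument (expanding the weighted inner products, shifting the index in one of the split sums, and using $u_{\frac12,j}=u_{N_x+\frac12,j}=0$, resp.\ $v_{i,\frac12}=v_{i,N_y+\frac12}=0$, to kill the end terms, then matching against the reduced summation formulas for $[\cdot,\cdot]_{ew}$ and $[\cdot,\cdot]_{ns}$) is exactly the standard discrete summation-by-parts computation that the cited source relies on, with the staggered-index bookkeeping handled correctly.
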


\begin{lem}\label{lem-2}
For $f\in \mathcal{V}^0_{x\times y}$, $u \in \mathcal{E}^{ew}_{x\times \overline{y}},$ $v \in \mathcal{E}^{ns}_{\overline{x}\times y},$  there exists the identities:
\ben
&&\langle f,A_y u\rangle_{vc} = [a_y f,u]_{ew},\quad \langle f,D_y u\rangle_{vc} + [d_y f,u]_{ew} = 0,\label{SumPart3}\\
&&\langle f,A_x v\rangle_{vc} = [a_x f,v]_{ns},\quad \langle f,D_x v\rangle_{vc} + [d_x f,v]_{ns} = 0.\label{SumPart4}
\een
\end{lem}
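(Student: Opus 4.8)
The plan is to establish all four identities in \eqref{SumPart3}--\eqref{SumPart4} by the same discrete summation-by-parts argument used for Lemma \ref{lem-1}, the only genuine input being that every grid value of $f$ sitting on $\partial\Omega$ vanishes because $f\in\mathcal{V}^0_{x\times y}=\mathcal{V}^{ew0}_{x\times y}\cap\mathcal{V}^{ns0}_{x\times y}$. By the left-right / top-bottom symmetry of the definitions of $a_x,d_x,A_x,D_x$ versus $a_y,d_y,A_y,D_y$, it suffices to prove \eqref{SumPart3}; \eqref{SumPart4} then follows verbatim after interchanging the roles of the two coordinate directions, with $u$ replaced by $v$ and $\mathcal{E}^{ew}$ by $\mathcal{E}^{ns}$.

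First I would unfold the vertex-centered inner product. Since $f$ vanishes on the boundary vertices, $\langle f,g\rangle_{vc}=h_xh_y\sum_{i=1}^{N_x-1}\sum_{j=1}^{N_y-1}f_{i+\frac12,j+\frac12}\,g_{i+\frac12,j+\frac12}$ for every $g\in\mathcal{V}_{x\times y}$, so both $\langle f,A_yu\rangle_{vc}$ and $\langle f,D_yu\rangle_{vc}$ become explicit double sums after substituting $(A_yu)_{i+\frac12,j+\frac12}=\frac{1}{2}(u_{i+\frac12,j+1}+u_{i+\frac12,j})$ and $(D_yu)_{i+\frac12,j+\frac12}=\frac{1}{h_y}(u_{i+\frac12,j+1}-u_{i+\frac12,j})$. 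For the average identity I would split the inner sum into the two pieces coming from $u_{i+\frac12,j+1}$ and $u_{i+\frac12,j}$, shift the summation index by one in the first piece, and recombine; the two end terms produced by the shift carry the factors $f_{i+\frac12,\frac12}$ and $f_{i+\frac12,N_y+\frac12}$, which are zero, so the recombined expression equals $h_xh_y\sum_{i=1}^{N_x-1}\sum_{j=1}^{N_y}\frac{1}{2}\big(f_{i+\frac12,j+\frac12}+f_{i+\frac12,j-\frac12}\big)u_{i+\frac12,j}=h_xh_y\sum_{i=1}^{N_x-1}\sum_{j=1}^{N_y}(a_yf)_{i+\frac12,j}\,u_{i+\frac12,j}$. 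Because $f\in\mathcal{V}^0_{x\times y}$ forces $a_yf\in\mathcal{E}^{ew0}_{x\times y}$, this last sum is precisely $[a_yf,u]_{ew}$ by the reduced form of $[\cdot,\cdot]_{ew}$ recorded in Section \ref{sect:Notations}, so $\langle f,A_yu\rangle_{vc}=[a_yf,u]_{ew}$.

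For the difference identity I would instead perform an Abel (discrete-by-parts) summation in $j$ on $\langle f,D_yu\rangle_{vc}=h_xh_y\sum_{i,j}f_{i+\frac12,j+\frac12}\frac{1}{h_y}(u_{i+\frac12,j+1}-u_{i+\frac12,j})$: reindexing the $u_{i+\frac12,j+1}$ term and collecting yields $-h_xh_y\sum_{i=1}^{N_x-1}\sum_{j=1}^{N_y}\frac{1}{h_y}\big(f_{i+\frac12,j+\frac12}-f_{i+\frac12,j-\frac12}\big)u_{i+\frac12,j}$ together with boundary contributions proportional to $f_{i+\frac12,\frac12}$ and $f_{i+\frac12,N_y+\frac12}$, which vanish; the surviving sum equals $-[d_yf,u]_{ew}$ (again using $d_yf\in\mathcal{E}^{ew0}_{x\times y}$), i.e.\ $\langle f,D_yu\rangle_{vc}+[d_yf,u]_{ew}=0$. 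Carrying out both computations with $x\leftrightarrow y$ gives \eqref{SumPart4}, completing the proof.

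The only delicate point — and the step I expect to be the main bookkeeping obstacle — is keeping the index ranges honest across the shifts: one must verify that for $u\in\mathcal{E}^{ew}_{x\times\overline{y}}$ the ghost columns $j=0$ and $j=N_y+1$ never actually contribute (they are always paired with a boundary value of $f$), and symmetrically that $a_yf$ and $d_yf$ genuinely lie in $\mathcal{E}^{ew0}_{x\times y}$, so that the reduced summation formulas of Section \ref{sect:Notations} are legitimate to invoke. Once that accounting is in place the four identities drop out immediately.
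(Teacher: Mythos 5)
Your argument is correct, and it is exactly the standard summation-by-parts computation one would expect here; the paper itself does not prove Lemma \ref{lem-2} but simply imports it from \cite{Gong&Zhao&WangSISC2}, so there is no in-paper proof to diverge from. You correctly identify the two points that actually need checking --- that the vanishing of $f$ at the boundary vertices both kills the end terms created by the index shift and guarantees $a_y f,\, d_y f \in \mathcal{E}^{ew0}_{x\times y}$ (resp.\ $a_x f,\, d_x f \in \mathcal{E}^{ns0}_{x\times y}$), which legitimizes the reduced form of $[\cdot,\cdot]_{ew}$ and $[\cdot,\cdot]_{ns}$ --- and the $x$--$y$ symmetry reduction to \eqref{SumPart3} is sound.
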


\begin{lem}\label{lem-3}
For $f\in \mathcal{V}_{x\times y}$, $u \in \mathcal{E}^{ew}_{x\times \overline{y}},$ $v \in \mathcal{E}^{ns}_{\overline{x}\times y}$ and $A_y u,A_x v \in \mathcal{V}^0_{x\times y},$ there exist the identities:
\ben
&&\langle f,D_y u\rangle_{vc} + [d_y f,u]_{ew} = 0,\label{SumPart5}\\
&&\langle f,D_x v\rangle_{vc} + [d_x f,v]_{ns} = 0.\label{SumPart6}
\een
\end{lem}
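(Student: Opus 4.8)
The plan is to prove both identities directly by discrete summation by parts; since the two are mirror images of each other, it suffices to establish the first, $\langle f, D_y u\rangle_{vc} + [d_y f, u]_{ew} = 0$, and then recover the second by interchanging the roles of the $x$- and $y$-directions. First I would unfold $\langle f, D_y u\rangle_{vc}$ and $[d_y f, u]_{ew}$ using the definitions of the pairings $\langle\cdot,\cdot\rangle_{vc}$ and $[\cdot,\cdot]_{ew}$ together with the operators $a_x, a_y, d_y, D_y$, writing everything as explicit double sums over cell indices. The outer $x$-average $a_x(\cdot)$ present in both pairings can then be redistributed onto the vertical grid lines by a one-dimensional telescoping in the $i$-index; because of the no-slip boundary behavior of $u$ (equivalently, by \eqref{dis-BD}, $D_y u \in \mathcal{V}^{ew0}_{x\times y}$), the contributions at the east and west walls $i+\tfrac12 = \tfrac12, N_x+\tfrac12$ drop out, and the claim decouples into a one-dimensional identity in the $j$-index on each fixed vertical line.

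Next, on a fixed line I would Abel-sum in $j$ the term coming from $\langle f, D_y u\rangle_{vc}$, using $D_y u_{i+\frac12,j+\frac12} = \tfrac1{h_y}(u_{i+\frac12,j+1}-u_{i+\frac12,j})$, and compare with the term from $[d_y f,u]_{ew}$, where $d_y f_{i+\frac12,j} = \tfrac1{h_y}(f_{i+\frac12,j+\frac12}-f_{i+\frac12,j-\frac12})$. The interior contributions cancel in pairs, exactly as in the proof of Lemma \ref{lem-2}, and only endpoint terms at $j=0$ and $j=N_y$ survive. The key point is that this leftover boundary contribution regroups precisely as $-\,f_{i+\frac12,\frac12}\,(A_y u)_{i+\frac12,\frac12} + f_{i+\frac12,N_y+\frac12}\,(A_y u)_{i+\frac12,N_y+\frac12}$: the half-weights generated by the vertical average $a_y$ in $\langle\cdot,\cdot\rangle_{vc}$ pair the ghost value $u_{i+\frac12,0}$ (resp.\ $u_{i+\frac12,N_y+1}$) with the adjacent interior value $u_{i+\frac12,1}$ (resp.\ $u_{i+\frac12,N_y}$) into exactly $(A_y u)_{i+\frac12,\frac12}$ (resp.\ $(A_y u)_{i+\frac12,N_y+\frac12}$). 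Invoking the hypothesis $A_y u \in \mathcal{V}^0_{x\times y} \subset \mathcal{V}^{ns0}_{x\times y}$, these two values are zero, so the line-by-line identity holds; summing over $i$ yields \eqref{SumPart5}. The same argument with $x \leftrightarrow y$, $u \leftrightarrow v$, and the assumption $A_x v \in \mathcal{V}^0_{x\times y}$ gives \eqref{SumPart6}.

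I expect the main obstacle to be the careful bookkeeping of ghost and boundary values: one has to track the half-weights produced by the two averagings in $\langle\cdot,\cdot\rangle_{vc}$, the exact index ranges (ghost rows and columns included) occupied by each discrete function, and which boundary condition annihilates which endpoint term. In particular, unlike Lemma \ref{lem-2}, where the endpoint terms vanished because $f\in\mathcal{V}^0_{x\times y}$, here $f$ is unconstrained on the boundary and it is instead the hypothesis $A_y u, A_x v \in \mathcal{V}^0_{x\times y}$ that kills them. Once one recognizes the regrouping that a ghost value plus the adjacent interior value equals the boundary vertex value of $A_y u$ (resp.\ $A_x v$), the remainder is routine, and most of the interior cancellation can be borrowed verbatim from the proof of Lemma \ref{lem-2}.
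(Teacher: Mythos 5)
Your argument is correct. The paper itself states Lemma \ref{lem-3} without proof (it is imported from the cited reference), so there is no in-paper argument to compare against; your summation-by-parts computation is the standard one and it checks out. In particular, the decisive bookkeeping step is right: for fixed $i$ the interior terms of $\langle f,D_y u\rangle_{vc}$ and $[d_y f,u]_{ew}$ cancel in pairs, and the residue is exactly
$-f_{i+\frac12,\frac12}\,\tfrac12\bigl(u_{i+\frac12,0}+u_{i+\frac12,1}\bigr)+f_{i+\frac12,N_y+\frac12}\,\tfrac12\bigl(u_{i+\frac12,N_y}+u_{i+\frac12,N_y+1}\bigr)
=-f_{i+\frac12,\frac12}(A_yu)_{i+\frac12,\frac12}+f_{i+\frac12,N_y+\frac12}(A_yu)_{i+\frac12,N_y+\frac12}$,
which the hypothesis $A_yu\in\mathcal{V}^0_{x\times y}\subset\mathcal{V}^{ns0}_{x\times y}$ annihilates; the mirror statement gives \eqref{SumPart6}. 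One small inaccuracy worth flagging: you do not need $D_yu\in\mathcal{V}^{ew0}_{x\times y}$ (a property of the discretized velocity from \eqref{dis-BD}, not a hypothesis of the lemma) to reduce to vertical lines. The single $a_x$ average appears identically in both pairings, so each line $i+\tfrac12$ simply carries the same weight ($\tfrac12$ at the east and west walls, $1$ in the interior) on both sides, and the one-dimensional identity in $j$ holds on every line, boundary lines included; nothing needs to ``drop out'' in the $x$-direction. With that remark removed, the proof is complete as written.
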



\subsection{Spatial discretization}
With the notation above, we are ready to explain the spatial discretization of the CHNS on staggered grids. To write it in a clear manner, we write the spatial discretization in the following component form
\begin{subequations}\label{model-CF}
\begin{align}
&\label{model-CF1} \rho  \partial_t u + \frac{\rho}{2} \Big( u \partial_x u + \partial_x (u u) +  v \partial_y u + \partial_y (v u)\Big)  = -\partial_x p +  \eta \partial_x \partial_x u - \phi \partial_x \mu,\\
&\label{model-CF2} \rho  \partial_t  v +\frac{\rho}{2} \Big( u \partial_x v + \partial_x (u v) + v \partial_y v + \partial_y (v v)\Big)  = -\partial_y p +  \eta \partial_y \partial_y  v - \phi \partial_y \mu,\\
&\label{model-CF3} \partial_x u + \partial_y v  =  0,\\
&\label{model-CF4} \partial_t \phi + \partial_x (\phi u) + \partial_y (\phi v)  = \partial_x \Big( M(\phi) \partial_x \mu \Big)  + \partial_y \Big( M(\phi) \partial_y \mu \Big),\\
&\label{model-CF5} \mu = -\gamma \varepsilon \Delta \phi +\frac{\gamma}{\varepsilon} f'(\phi),
\end{align}
\end{subequations}
where $\bu=(u, v)$. Then, we apply the finite difference spatial discretization on a staggered grid on the system \eqref{model-CF}.

\begin{scheme}[Semi-discrete Scheme] \label{sch:semi-discrete}
Applying staggered-grid finite differences in space to the system \eqref{model-CF} with boundary conditions \eqref{eq:CHNS-boundary}, we obtain a semi-discrete scheme as follows:
\begin{subequations}\label{semi-discrete}
\begin{align}
&\label{semi1} \Big\{\rho \frac{d}{dt} u +  \frac{\rho}{2} \Big(u D_x (a_x  u) + A_x \big(d_x(u u)\big) + a_y (A_x v D_y u) + d_y (A_y u A_x v)\Big)  \\
&\nonumber \qquad =  -D_x p + \eta  D_x  d_x u  - A_x \phi D_x \mu \Big\}_{i+\frac{1}{2},j},\\
&\label{semi2} \Big\{\rho \frac{d}{dt} v + \frac{\rho}{2} \Big(a_x (A_y u D_x v) + d_x \big(A_y u A_x v \big) + v D_y(a_y v) + A_y \big(d_y (v v)\big)\Big)  \\
&\nonumber \qquad =  -D_y p +  + \eta  D_y d_y v - A_y \phi D_y \mu\Big\}_{i,j+\frac{1}{2}},\\
&\label{semi3} \Big\{d_x u + d_y v  =  0\Big\}_{i,j},\\
&\label{semi4} \Big\{\frac{d}{dt} \phi + d_x (A_x\phi u) + d_y (A_y\phi v)  = d_x \big(M(A_x\phi) D_x \mu\big) + d_y \big(M(A_y\phi) D_y \mu\big)\Big\}_{i,j},\\
&\label{semi5} \Big\{\mu = -\gamma \varepsilon \Delta_h\phi  +\frac{\gamma}{\varepsilon} f'(\phi) \Big\}_{i,j},
\end{align}
\end{subequations}
where $u, v, \phi, \mu$ satisfy the discrete boundary conditions \eqref{dis-BD1}-\eqref{dis-BD6}, and $i=1,\ldots,N_x-1,j=1,\ldots,N_y$ for Eq. \eqref{semi1}, $i=1,\ldots,N_x,j=1,\ldots,N_y-1$ for Eq. \eqref{semi2}, $i=1,\ldots,N_x,j=1,\ldots,N_y$ for Eqs. \eqref{semi3}-\eqref{semi5}.
\end{scheme}

We can easily show that the semi-discrete scheme \ref{sch:semi-discrete} have the following two properties.
\begin{thm}\label{t-semiDis-EDL}
The semi-discrete scheme \ref{sch:semi-discrete} preserves the discrete mass conservation law given by
\beq\label{semiDis-MCL}
\frac{d}{dt} (\phi,1)_2 = 0.
\eeq
\end{thm}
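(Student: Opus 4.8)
The plan is to test the discrete phase equation \eqref{semi4} against the constant function $1\in\mathcal{C}_{x\times y}$ in the inner product $(\cdot,\cdot)_2$ and to show that every convective and diffusive flux contribution cancels under discrete summation by parts. Pairing \eqref{semi4} with $1$ and interchanging $\frac{d}{dt}$ with the finite spatial sum yields
\[
\frac{d}{dt}(\phi,1)_2 = -\big(d_x(A_x\phi\, u),1\big)_2 - \big(d_y(A_y\phi\, v),1\big)_2 + \big(d_x(M(A_x\phi)D_x\mu),1\big)_2 + \big(d_y(M(A_y\phi)D_y\mu),1\big)_2,
\]
so it suffices to prove that each of the four terms on the right-hand side vanishes.

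The key observation is that Lemma \ref{lem-1}, applied with the constant test function $\phi\equiv 1$ (so that $D_x 1 = D_y 1 = 0$), gives $(d_x w,1)_2 = 0$ for every $w\in\mathcal{E}^{ew0}_{x\times y}$ and $(d_y z,1)_2 = 0$ for every $z\in\mathcal{E}^{ns0}_{x\times y}$. Hence I only need to verify that the four edge-centered fields appearing inside $d_x$ and $d_y$ above lie in these homogeneous-boundary spaces. For the convective fluxes this is a consequence of the no-slip condition: by \eqref{dis-BD3}--\eqref{dis-BD6} and \eqref{dis-BD} one has $u\in\mathcal{E}^{ew0}_{x\times\overline{y}}\subset\mathcal{E}^{ew0}_{x\times y}$ and $v\in\mathcal{E}^{ns0}_{\overline{x}\times y}\subset\mathcal{E}^{ns0}_{x\times y}$, and multiplying by the cell-to-edge average $A_x\phi$ (resp. $A_y\phi$) preserves the vanishing of the relevant boundary edge values, so $A_x\phi\, u\in\mathcal{E}^{ew0}_{x\times y}$ and $A_y\phi\, v\in\mathcal{E}^{ns0}_{x\times y}$. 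For the diffusive fluxes one instead uses the discrete Neumann conditions \eqref{dis-BD1}--\eqref{dis-BD2} on $\mu$, which by \eqref{dis-BD} give $D_x\mu\in\mathcal{E}^{ew0}_{x\times\overline{y}}$ and $D_y\mu\in\mathcal{E}^{ns0}_{\overline{x}\times y}$; since these vanish on the relevant boundary edges, so do the pointwise products $M(A_x\phi)D_x\mu$ and $M(A_y\phi)D_y\mu$, regardless of the size or sign of $M$. Feeding these four memberships into the two identities from the first sentence of this paragraph annihilates all four terms, and \eqref{semiDis-MCL} follows.

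The argument is essentially bookkeeping with summation by parts, and I do not anticipate a genuine obstacle. The one point needing a little care is the function-space membership of the fluxes: it is the no-slip condition on $\bu$ --- not any condition on $\phi$ --- that makes the convective fluxes admissible test arguments for Lemma \ref{lem-1}, and the discrete Neumann condition on $\mu$ --- not on $\phi$ --- that does the same for the diffusive fluxes. Once these memberships are pinned down, the conclusion is immediate.
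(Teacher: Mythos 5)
Your proposal is correct and follows essentially the same route as the paper's proof: pair \eqref{semi4} with the constant function $1$, then use the discrete boundary memberships in \eqref{dis-BD} together with the summation-by-parts identities of Lemma \ref{lem-1} to annihilate both the convective and diffusive flux terms. You merely spell out in more detail the flux-space verifications that the paper leaves implicit, so the argument matches the intended one.
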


\begin{proof}
This can be easily verified. IN fact, we can compute the discrete inner product of \eqref{semi4} with constant function $1$, and use \eqref{dis-BD} and Lemma \ref{lem-1}. Then, this leads us to \eqref{semiDis-MCL}.
\end{proof}

\begin{thm}
The semi-discrete scheme \ref{sch:semi-discrete} preserves the  discrete energy dissipation law 
\beq
\label{semiDis-EDL} 
\frac{d}{dt}E_h + \Big(\eta, (d_x u)^2+(d_y v)^2\Big)_2  + \big[M(A_x\phi), (D_x \mu)^2\big]_{ew} + \big[M(A_y\phi), (D_y \mu)^2\big]_{ns} = 0,
\eeq
where $E_h$ is the discrete energy functional defined as
\beq\label{semiDis-energy}
E_h = \frac{1}{2}\|\bu\|_2^2
+\frac{\gamma \varepsilon}{2}\|\nabla\phi\|_2^2+\frac{\gamma}{\varepsilon} (f(\phi), 1)_2.
\eeq
\end{thm}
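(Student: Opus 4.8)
The plan is to reproduce the continuous estimate \eqref{eq:energy-law-continous} at the semi-discrete level by pairing each equation of Scheme \ref{sch:semi-discrete} with its natural dual variable and summing. Concretely, I would take $[\cdot,\cdot]_{ew}$ of \eqref{semi1} with $u$, $[\cdot,\cdot]_{ns}$ of \eqref{semi2} with $v$, $(\cdot,\cdot)_2$ of \eqref{semi5} with $\frac{d}{dt}\phi$, and $(\cdot,\cdot)_2$ of \eqref{semi4} with $\mu$, add the resulting identities, and invoke the discrete incompressibility \eqref{semi3}. Since $\rho$ is a constant, pairing the inertial terms produces $\frac{d}{dt}$ of the kinetic part of $E_h$ in \eqref{semiDis-energy} (using $\|\bu\|_2^2=\|u\|_{ew}^2+\|v\|_{ns}^2$). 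The term $(\frac{d}{dt}\phi,\mu)_2$ coming from \eqref{semi4} is evaluated via \eqref{semi5}: writing $\mu=-\gamma\varepsilon\Delta_h\phi+\frac{\gamma}{\varepsilon}f'(\phi)$, applying Lemma \ref{lem-1} to $(\frac{d}{dt}\phi,\Delta_h\phi)_2=(\frac{d}{dt}\phi,d_x(D_x\phi))_2+(\frac{d}{dt}\phi,d_y(D_y\phi))_2$ (legitimate since $D_x\phi\in\mathcal{E}^{ew0}_{x\times\overline{y}}$, $D_y\phi\in\mathcal{E}^{ns0}_{\overline{x}\times y}$ by \eqref{dis-BD}) gives $-\frac{1}{2}\frac{d}{dt}\big(\|D_x\phi\|_{ew}^2+\|D_y\phi\|_{ns}^2\big)=-\frac{1}{2}\frac{d}{dt}\|\nabla\phi\|_2^2$, and the discrete chain rule gives $(\frac{d}{dt}\phi,f'(\phi))_2=\frac{d}{dt}(f(\phi),1)_2$; hence $(\frac{d}{dt}\phi,\mu)_2=\frac{d}{dt}\big(\frac{\gamma\varepsilon}{2}\|\nabla\phi\|_2^2+\frac{\gamma}{\varepsilon}(f(\phi),1)_2\big)$. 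Together these assemble $\frac{d}{dt}E_h$.

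Next I would dispose of the remaining terms, which either cancel in pairs or produce the dissipation. The pressure contributions $-[D_xp,u]_{ew}-[D_yp,v]_{ns}$ collapse, by the second identities of Lemma \ref{lem-1}, to $(p,d_xu+d_yv)_2$, which vanishes by \eqref{semi3}. The viscous terms $\eta[D_xd_xu,u]_{ew}+\eta[D_yd_yv,v]_{ns}$ become $-(\eta,(d_xu)^2)_2-(\eta,(d_yv)^2)_2$, again by Lemma \ref{lem-1}. The Cahn--Hilliard fluxes on the right of \eqref{semi4}, paired with $\mu$, similarly become $-[M(A_x\phi),(D_x\mu)^2]_{ew}-[M(A_y\phi),(D_y\mu)^2]_{ns}$. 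Finally, the capillary forcing $-[A_x\phi\,D_x\mu,u]_{ew}-[A_y\phi\,D_y\mu,v]_{ns}$ from the momentum equations and the advection $\big(d_x(A_x\phi\,u)+d_y(A_y\phi\,v),\mu\big)_2$ from the phase equation --- together the discrete realization of the skew-adjoint block $\cG_a$ of \eqref{eq:Splitting-Case1} --- cancel: Lemma \ref{lem-1} rewrites $\big(d_x(A_x\phi\,u),\mu\big)_2=-[A_x\phi\,u,D_x\mu]_{ew}=-[A_x\phi\,D_x\mu,u]_{ew}$ (pointwise edge products commute), and likewise in $y$, so these two blocks enter the summed identity with opposite signs and drop. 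What remains is exactly \eqref{semiDis-EDL}, provided the convection terms vanish.

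The genuine obstacle is showing that the nonlinear convection contributions vanish --- the discrete counterpart of the identity $b(\bv,\bu,\bu)=0$ recorded in Section \ref{sect:model-reformulation}. After pairing \eqref{semi1} with $u$ and \eqref{semi2} with $v$, one must establish
\[
\Big[\,u\,D_x(a_x u)+A_x\big(d_x(u u)\big)+a_y(A_x v\,D_y u)+d_y(A_y u\,A_x v),\,u\,\Big]_{ew}
+\Big[\,a_x(A_y u\,D_x v)+d_x(A_y u\,A_x v)+v\,D_y(a_y v)+A_y\big(d_y(v v)\big),\,v\,\Big]_{ns}=0 .
\]
This skew-symmetry is deliberately built into the staggered-grid discretization of $B(\bu,\bu)$. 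I would prove it by repeatedly transferring the average/difference operators $a_x,a_y,d_x,d_y$ and their center-to-edge duals $A_x,A_y,D_x,D_y$ between the $[\cdot,\cdot]_{ew}$, $[\cdot,\cdot]_{ns}$ and $\langle\cdot,\cdot\rangle_{vc}$ pairings via Lemmas \ref{lem-1}--\ref{lem-3}; this is precisely where the no-slip conditions \eqref{dis-BD3}--\eqref{dis-BD6} enter, annihilating every boundary contribution. One expects the mixed pieces carrying $A_x v$ or $A_y u$ to pair off between the two brackets, and the pure pieces $[\,u\,D_x(a_x u)+A_x(d_x(uu)),u\,]_{ew}$ and $[\,v\,D_y(a_y v)+A_y(d_y(vv)),v\,]_{ns}$ to self-cancel after a single summation by parts; bookkeeping the many average/difference operators without sign errors is the only real difficulty. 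With this identity in hand the summed estimate collapses to \eqref{semiDis-EDL}; since $M(A_x\phi),M(A_y\phi)\ge 0$ and $\eta\ge 0$, it also shows $E_h$ is non-increasing, which completes the argument.
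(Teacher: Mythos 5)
Your proposal is correct and follows essentially the same route as the paper: test the momentum equations with $u,v$, the phase equation with $\mu$, identify $\frac{d}{dt}E_h$ via \eqref{semi5}, and dispose of the pressure, capillary/advection, and convection blocks by summation by parts using Lemmas \ref{lem-1}--\ref{lem-3} together with the discrete no-slip conditions. The one small inaccuracy is your guess about how the convection terms cancel: the mixed pieces do not pair off between the two momentum brackets but cancel internally within each one, e.g.\ $\big[a_y(A_x v\,D_y u)+d_y(A_y u\,A_x v),\,u\big]_{ew}=\langle A_x v\,D_y u,\,A_y u\rangle_{vc}-\langle A_y u\,A_x v,\,D_y u\rangle_{vc}=0$ by Lemma \ref{lem-2}, just as the pure pieces self-cancel by Lemma \ref{lem-1}.
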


\begin{proof}

Noticing that $u \in \mathcal{E}^{ew0}_{x\times y}$ and using Lemma \ref{lem-1}, we have
\beq\label{semi-r-v1-1}
\big[u D_x (a_x u) + A_x \big(d_x(u u)\big), u\big]_{ew} = -\big(a_x u ,d_x(u u )\big)_{2} + (d_x(u u),a_x u)_{2} = 0.
\eeq
Eq. \eqref{dis-BD} implies that $ u \in \mathcal{E}^{ew0}_{x\times \overline{y}},~ v \in \mathcal{E}^{ns0}_{\overline{x}\times y},$ and thus $A_x v D_y u, A_y u A_x v \in \mathcal{V}^{0}_{x\times y}.$ According to Lemma \ref{lem-2}, we then have
\beq\label{semi-r-v1-2}
 \Big[a_y (A_x v D_y u) + d_y (A_y u A_x v), u\Big]_{ew} = \langle A_x v D_y u,A_y u\rangle_{vc}-\langle A_y u A_x v,D_y u\rangle_{vc}=0.
 \eeq
Similarly, we can deduce
\ben
&&\Big[a_x (A_y u D_x v) + d_x \big(A_y u A_x v\big),v\Big]_{ns} = 0,~ \Big[v D_y(a_y v) + A_y \big(d_y (v v)\big),v_2\Big]_{ns} = 0,\label{semi-r-v2}\\
&& [D_x p,u]_{ew}+[D_y p,v]_{ns} = -(p,d_x u +d_y v)_2 = 0,\label{semi-r-p}\\
&& \Big[\eta D_x\big( d_x u\big),u\Big]_{ew}+\Big[\eta  D_y\big( d_y v\big),v\Big]_{ns} = -\Big(\eta, (d_x u)^2+(d_y v)^2\Big)_2,\\
&& [A_x \phi D_x\mu,u]_{ew}+[A_y \phi D_y\mu,v]_{ns} = -\big(\mu,d_x(A_x \phi u)+d_y(A_y \phi v)\big)_2,\label{semi-r-mu-phi-bv}\\
&& \nonumber \Big(d_x \big(M(A_x\phi) D_x \mu\big) + d_y \big(M(A_y\phi) D_y \mu\big),\mu\Big)_2 = \\
&& \quad - \big[M(A_x\phi), (D_x \mu)^2\big]_{ew} - \big[M(A_y\phi), (D_y \mu)^2\big]_{ns}.\label{semi-r-lap-mu}
\een
Computing the discrete inner product of \eqref{semi1}, \eqref{semi2} and \eqref{semi4} with $u,$ $v$ and $\mu,$ respectively, and using \eqref{semi-r-v1-1}-\eqref{semi-r-lap-mu}, we have
\begin{multline}\label{semi-bv+phi-inner}
 [u,{u}_t]_{ew}+[v,{v}_t]_{ns}+(\mu,\phi_t)_2 =\\
  - \Big(\eta, (d_x u)^2+(d_y v)^2\Big)_2  - \big[M(A_x\phi), (D_x \mu)^2\big]_{ew} - \big[M(A_y\phi), (D_y \mu)^2\big]_{ns}. 
\end{multline}
In addition, it is not hard to calculate
\begin{align*}
\frac{d}{dt}E_h &= [u,{u}_t]_{ew}+[v,{v}_t]_{ns}
+\gamma \varepsilon ([D_x\phi,D_x\phi_t]_{ew}+[D_y\phi,D_y\phi_t]_{ns}) + (\frac{\gamma}{\varepsilon}f(\phi), \phi_t)_2\\
&= [u,{u}_t]_{ew}+[v,{v}_t]_{ns}
- (\gamma \varepsilon \Delta_h \phi,\phi_t)_2+(\frac{\gamma}{\varepsilon} f(\phi),\phi_t)_2\\
&= [u,{u}_t]_{ew}+[v,{v}_t]_{ns}+(\mu,\phi_t)_2.
\end{align*}

Adding the two equations above, will lead us to \eqref{semiDis-EDL}. This completes the proof.
\end{proof}

\section{Decoupled Time Discretization} \label{sect:time}
With the semi-discrete scheme \ref{sch:semi-discrete}, we are ready to introduce the temporal discretization. Notice the fact the spatial discretization and the temporal discretization are independent.  To simply our notations, we apply the temporal discretization directly on the continuous CHNS system \eqref{eq:CHNS}, instead of on the semi-discrete scheme in \eqref{semi1}-\eqref{semi5}. However, we emphasize by using the same time discretization on the semi-discrete scheme \ref{sch:semi-discrete}, the full discrete scheme will be immediately obtained.

\subsection{Notations for temporal discretization}
To better explain the proposed numerical algorithms, we introduce some notations for the temporal discretization. 
Consider the time domain $t\in [0, T]$. We discretize it into equally distanced intervals $0=t_0 < t_1 < t_2 < \cdots < t_N = T$, with $\delta t = \frac{T}{N}$, and $t_i = i \delta t$, $i=0,1,2\cdots, N$. 
Following the notations in our previous work, we introduce
\ben
&& \phi^{n+\frac{1}{2}} = \frac{1}{2}( \phi^{n+1} + \phi^n), \quad  \overline{\phi}^{n+\frac{1}{2}} = \frac{1}{2}( 3 \phi^n - \phi^{n-1}),  \\
&& \overline{\phi}^{n+\frac{1}{4}} = \frac{5}{4} \phi^n - \frac{1}{4} \phi^{n-1}, \quad \overline{ \phi}^{n+\frac{3}{4}} = \frac{7}{4} \phi^n - \frac{3}{4} \phi^{n-1}, \\
&& \tilde{\bu}^{n+\frac{1}{2}} = \frac{1}{2} ( 3 \bu^n - \bu^{n-1}),  \quad  \overline{\bu}^{n+\frac{1}{4}} = \frac{5}{4}  \bu^n -\frac{1}{4} \bu^{n-1},  \quad  \overline{\bu}^{n+\frac{3}{4}} = \frac{7}{4} \bu^n - \frac{3}{4} \bu^{n-1}.
\een

For any $f, g \in [L^2(\Omega)]^d$ with $d$ the vector dimension, we denote the inner product and induced $L^2$ norm as
\beq
(f, g)  = \int_\Omega \sum_{i=1}^d f_i g_i d\bx, \quad \| f \| = \sqrt{ (f, f)}.
\eeq 

Recall the bulk potential in this paper \eqref{eq:bulk_potential}. For the semi-implicit discretization of $f'$ in the interval $[t_n, t_{n+1}]$, we denote it as $\frac{\delta f}{\delta(\phi^n, \phi^{n+1})}$. 
We use the classical difference quotient 
\beq \label{eq:f_prime_case_1}
\frac{\delta f}{\delta(\phi^n, \phi^{n+1})} =  \Big[\frac{ (\phi^n)^2 + (\phi^{n+1})^2}{2} - 1\Big] \frac{ \phi^n + \phi^{n+1}}{2}.
\eeq 
It has the property that
\beq \label{eq:bulk_potential_requirement}
f(\phi^{n+1}) - f(\phi^n) = (\phi^{n+1} - \phi^n) \frac{\delta f}{\delta(\phi^n, \phi^{n+1})}.
\eeq 

We emphasis that the property in \eqref{eq:bulk_potential_requirement} is a rather strong requirement.
 In general, we could require a weaker property
\beq
f(\phi^{n+1}) - f(\phi^n) \leq (\phi^{n+1} - \phi^n) \frac{\delta f}{\delta(\phi^n, \phi^{n+1})}.
\eeq 
The advantage of the choice in \eqref{eq:f_prime_case_1} is that the requirement in \eqref{eq:bulk_potential_requirement} is automatically satisfied. Meanwhile, the major disadvantage is that it is nonlinear, such that each time step, a nonlinear problem has to be solved. 

\begin{rem}
Note that the choice of $\frac{\delta f}{\delta(\phi^n, \phi^{n+1})}$  is not unique.   If we can assume
\beq
\max_{\phi \in \mathbb{R}} |f'(\phi) | \leq L,
\eeq 
with $L$ a constant, we can utilize the semi-implicit stabilized discretization to linearize it as
\beq \label{eq:f_prime_case_2}
\frac{\delta f}{\delta(\phi^n, \phi^{n+1})} = f'(\frac{3}{2} \phi^n-\frac{1}{2}\phi^{n-1}) - A \delta t \Delta (\phi^{n+1} - \phi^n) + B (\phi^{n+1} - 2\phi^n + \phi^{n-1}).
\eeq 
where $A$ and $B$ are stabilization constants \cite{Wang&YuJSC2018}. It can be shown that the scheme will be energy stable, given $A$ and $B$ is big enough.
\end{rem}

\begin{rem}
Or as a simple case, we may use the linearized approximation
\beq \label{eq:f_prime_case_3}
\frac{\delta f}{\delta(\phi^n, \phi^{n+1})} =f'(\frac{3}{2} \phi^n-\frac{1}{2}\phi^{n-1})  + C\delta t(\phi^{n+1}-\phi^n),
\eeq 
with $C$ a stabilization constant.  
\end{rem}

\begin{rem}
The possible choices in \eqref{eq:f_prime_case_1}, \eqref{eq:f_prime_case_2} and \eqref{eq:f_prime_case_3} all have their advantages and disadvantages. Once can even further simply this by introducing auxiliary variables to result in linear systems.  However, the major focus of this paper is to design decoupled numerical schemes, such that the CHNS system can be easily solved.
\end{rem}

\begin{scheme}[Second-Order Splitting Scheme]   \label{Scheme:sch-general}
To solve the CHNS system of \eqref{eq:CHNS-gradient-flow} in the time interval $[t_n, t_{n+1}]$, we use the  (second-order) Strang-Marchuk operator splitting method for $\cG_a$ and $\cG_s$. Then each time marching step will require the following three sub-steps.
\begin{itemize}
\item Step 1: In the interval $[t_n, t_{n+\half}]$ , solve the problem
$$\Lambda \partial_t \Psi = \cG_s \frac{\delta E}{\delta \Psi}, \mbox{ with } \Psi(t=t_n) = \Psi^n,$$
and get $\Psi_{\star} = \Psi(t=t_{n+\half})$.
\item Step 2: In the interval $[t_n, t_{n+1}]$, solve the problem
$$
\Lambda \partial_t \Psi = \cG_a \frac{\delta E}{\delta \Psi} \mbox{ with } \Psi(t=t_n) =\Psi_{\star},
$$
and get $\Psi_{\star \star} = \Psi(t=t_{n+1})$.
\item Step 3: In the interval $[t_{n+\half}, t_{n+1}]$, solve the problem
$$
\Lambda \partial_t \Psi = \cG_s \frac{\delta E}{\delta \Psi}, \mbox{   with } \Psi(t=t_{n+\half}) = \Psi_{\star \star}.$$
and get $\Psi^{n+1} = \Psi(t_{n+1})$.
\end{itemize}
\end{scheme}

The scheme above is a second-order operator splitting algorithm. Given specific operators $\cG_s$ and $\cG_a$, the proposed scheme \ref{Scheme:sch-general} can be specified. In the rest of this section, we will discuss several variants of decoupled numerical schemes.

\subsection{Second-order decoupled time-marching scheme for the CHNS model}
If we plug the mobility operator splitting formula in \eqref{eq:Splitting-Case1}  into the general scheme \ref{Scheme:sch-general}, we obtain the second-order numerical schemes below.
\begin{scheme} \label{Scheme:sch-1}
Given $(\bu^n, \phi^n)$, $(\bu^{n-1}, \phi^{n-1})$ and $\bu^n \cdot \bn=0$, we can obtain $(\bu^{n+1}, \phi^{n+1})$ in the following three steps:
\begin{itemize}
\item Step 1: in $[t_n, t_{n+\frac{1}{2}}]$, we solve $(\bu_{\star}, \phi_\star)$ via the following two decoupled sub-steps.
\begin{itemize}
\item Step 1.1: solve $\bu_\star$ via
\begin{subequations} \label{eq:sch1-s1}
\begin{align}
&\label{eq:sch1-s1-1} \rho \frac{\bu_\star - \bu^n}{\delta t/2}  + B(\bar{\bu}^{n+\frac{1}{4}}, \frac{\bu_\star + \bu^n}{2})= -\nabla p + \eta \Delta \frac{\bu_\star + \bu^n}{2} , \\
&\label{eq:sch1-s1-2} \nabla \cdot \frac{\bu_\star + \bu^n}{2} = 0, \\
&\label{eq:sch1-s1-3} \bu_\star =0, \quad \mbox{ on } \partial \Omega.
\end{align}
\end{subequations}
\item Step 1.2: solve $\phi_\star$ via
\begin{subequations} \label{eq:sch1-s2}
\begin{align}
&  \label{eq:sch1-s2-s1} \frac{\phi_\star - \phi^n}{\delta t/2} =  \nabla (M(\overline{\phi}^{n+\frac{1}{4}}) \nabla \mu^{n+\frac{1}{4}}),  \\
& \label{eq:sch1-s2-s2} \mu^{n+\frac{1}{4}} = -\frac{\gamma \varepsilon}{2}(\Delta \phi_\star + \Delta \phi^n) + \frac{\delta f}{\delta (\phi_\star, \phi^n)},\\
& \label{eq:sch1-s2-s3} \nabla \mu^{n+\frac{1}{4}} \cdot \bn =0, \quad \nabla \phi_\star \cdot \bn = 0, \quad \mbox{ on } \partial \Omega.
\end{align}
\end{subequations}
\end{itemize}

\item  Step 2, In $[t_n, t_{n+1}]$, we solve $(\bu_{\star\star}, \phi_{\star\star})$ via two sub-steps.
\begin{itemize}
\item Step 2.1, Solve $(\phi_{\star \star}, p)$ via the following system
\begin{subequations} \label{eq:sch1-s2X}
\begin{align}
&\frac{1}{\delta t}( \phi_{\star\star} - \phi_\star) + \nabla \cdot ( \bu_\star \overline{\phi}^{n+\frac{1}{2}}) = \nabla \cdot ( \frac{\delta t}{2 \rho}  (\overline{\phi}^{n+\frac{1}{2}})^2  \nabla \mu^{n+\frac{1}{2}}) 
+ \frac{\delta t}{2\rho} \nabla \cdot (\overline{\phi}^{n+\frac{1}{2}} \nabla p), \\
&\mu^{n+\frac{1}{2}} = -\frac{\gamma \varepsilon}{2}(\Delta \phi_{\star\star} + \Delta \phi_{\star}) +\frac{\gamma}{\varepsilon} \frac{\delta f}{\delta (\phi_{\star \star}, \phi_{\star})},\\
&-\Delta p - \nabla \cdot ( \overline{\phi}^{n+\frac{1}{2}} \nabla \mu^{n+\frac{1}{2}}) = 0, \\
&\nabla \mu^{n+\frac{1}{2}} \cdot \bn =0, \quad \nabla \phi_{\star \star}  \cdot \bn = 0,  \quad \nabla p \cdot \bn = 0, \quad \mbox{ on } \partial \Omega.
\end{align}
\end{subequations}
\item Step 2.2, Update $\bu_{\star \star}$ via
\beq  \label{eq:sch1-s2Y}
\bu_{\star \star} = \bu_\star - \frac{\delta t}{\rho} (\nabla p + \overline{\phi}^{n+\frac{1}{2}} \nabla \mu^{n+\frac{1}{2}}),
\eeq 
with $\mu^{n+\frac{1}{2}}$ defined in \eqref{eq:sch1-s2X}.
\end{itemize}

\item Step 3,  In $[t_{n+\frac{1}{2}}, t_{n+1}]$, we solve $(\bu^{n+1}, \phi^{n+1})$ via the following decoupled two sub-steps.
\begin{itemize}
\item Step 3.1, Solve $\bu^{n+1}$ via
\begin{subequations} \label{eq:sch1-s3}
\begin{align}
\label{eq:sch1-s3-1}&\rho \frac{\bu^{n+1}- \bu_{\star \star}}{\delta t/2}  + B(\bar{\bu}^{n+\frac{3}{4}}, \frac{\bu^{n+1} + \bu_{\star\star}}{2})= -\nabla p + \eta \Delta \frac{\bu_{\star \star} + \bu^{n+1}}{2} , \\
\label{eq:sch1-s3-2}&\nabla \cdot \frac{\bu^{n+1} + \bu_{\star\star}}{2}= 0, \\
\label{eq:sch1-s3-3}&\bu^{n+1} =0, \quad \mbox{ on } \partial \Omega.
\end{align}
\end{subequations}
\item Step 3.2, Solve $\phi^{n+1}$ via
\begin{subequations}
\begin{align}
&\label{eq:sch1-s3-4} \frac{\phi^{n+1}  - \phi_{\star \star}}{\delta t/2} =  \nabla \cdot( M(\overline{\phi}^{n+\frac{3}{4}}) \nabla  \mu^{n+\frac{3}{4}}),  \\
&\label{eq:sch1-s3-5} \mu^{n+\frac{3}{4}} = -\frac{\gamma \varepsilon}{2}(\Delta \phi^{n+1} + \Delta \phi_{\star\star}) +\frac{\gamma}{\varepsilon} \frac{\delta f}{\delta (\phi^{n+1}, \phi_{\star\star})},\\
&\label{eq:sch1-s3-6} \nabla \mu^{n+\frac{3}{4}} \cdot \bn =0, \quad \nabla \phi^{n+1} \cdot \bn = 0, \quad \mbox{ on } \partial \Omega.
\end{align}
\end{subequations}
\end{itemize}

\end{itemize}
\end{scheme}

The scheme above is second-order accurate in time. Moreover, in each step, only problems with smaller sizes need to be solved. Here are several remarks.

\begin{rem}
Note that Step 2 in \eqref{eq:sch1-s2X}-\eqref{eq:sch1-s2Y} comes from  the problem
\begin{subequations}\label{sch1:step2}
\begin{align}
\label{sch1:step2-1}  &\rho \frac{\bu_{\star \star}  - \bu_\star}{\delta t} = -\nabla p - \bar{\phi}^{n+\frac{1}{2}} \nabla \mu^{n+\frac{1}{2}}, \\
\label{sch1:step2-2} &\mu^{n+\frac{1}{2}} = -\frac{\gamma \varepsilon}{2}(\Delta \phi_{\star\star} + \Delta \phi_{\star}) + \frac{\gamma}{\varepsilon} \frac{\delta f}{\delta (\phi_{\star \star}, \phi_{\star})},\\
\label{sch1:step2-3} &\nabla \cdot \frac{ \bu_{\star \star} + \bu_\star}{2}= 0, \\
\label{sch1:step2-4}& \frac{\phi_{\star \star} - \phi_\star }{\delta t} = -\nabla \cdot ( \frac{ \bu_{\star \star} + \bu_\star}{2} \bar{\phi}^{n+\frac{1}{2}}), \\
\label{sch1:step2-5} &\nabla \mu^{n+\frac{1}{2}} \cdot \bn =0, \quad \nabla \phi_{\star\star} \cdot \bn = 0, \quad \bu_{\star\star} =0, \quad \mbox{ on } \partial \Omega.
\end{align}
\end{subequations}
Notice  $\bu_{\star \star}$ and $\phi_{\star\star}$ can be decoupled, by realizing the following equality
$$
\frac{\bu_{\star \star} + \bu_{\star}}{2} = \bu_{\star} - \frac{\delta t}{2\rho} (\nabla p + \overline{\phi}^{n+\frac{1}{2}} \nabla \mu^{n+\frac{1}{2}}).
$$ 
Then we rewrite the problem in Step 2 as two decoupled sub-steps in \eqref{eq:sch1-s2X}-\eqref{eq:sch1-s2Y}.
The coupled system of $(\phi_{\star \star}, p)$ is significantly reduced compared with the original coupled CHNS system in \eqref{eq:CHNS}. Also, unlike the coupling in \eqref{eq:CHNS}, its dimension complexity does not increase with the dimension of the problem.
\end{rem}

\begin{rem} \label{rem:Relaxation}
Furthermore, we can even relax the in-compressibility constraint in Step 2. Then the problem of \eqref{sch1:step2} in Step 2 is reduced to
\beq
\left\{
\bea{l}
\rho \frac{\bu_{\star \star}  - \bu_\star}{\delta t} =  - \bar{\phi}^{n+\frac{1}{2}} \nabla \mu^{n+\frac{1}{2}}, \\
\mu^{n+\frac{1}{2}} = -\frac{\gamma \varepsilon}{2}(\Delta \phi_{\star\star} + \Delta \phi_{\star}) + \frac{\gamma}{\varepsilon} \frac{\delta f}{\delta (\phi_{\star \star}, \phi_{\star})},\\
\frac{\phi_{\star \star} - \phi_\star }{\delta t} = -\nabla \cdot ( \frac{ \bu_{\star \star} + \bu_\star}{2} \bar{\phi}^{n+\frac{1}{2}}), \\
\nabla \mu^{n+\frac{1}{2}} \cdot \bn =0, \quad \nabla \phi_{\star\star} \cdot \bn = 0, \quad \bu_{\star\star} =0, \quad \mbox{ on } \partial \Omega.
\eea 
\right.
\eeq 
Hence, Step 2 can be solved by the following two sub-steps:
\begin{itemize}
\item Step 2.1, solve $\phi_{\star \star}$ via
\beq
\left\{
\bea{l}
\frac{1}{\delta t}( \phi_{\star \star} - \phi_\star) + \nabla \cdot ( \bu_\star \overline{\phi}^{n+\frac{1}{2}}) = \nabla \cdot (\frac{\delta t}{2 \rho}  (\overline{\phi}^{n+\frac{1}{2}})^2 \nabla \mu^{n+\frac{1}{2}}), \\
\mu^{n+\frac{1}{2}} = -\frac{\gamma \varepsilon}{2}(\Delta \phi_{\star\star} + \Delta \phi_{\star}) + \frac{\gamma}{\varepsilon} \frac{\delta f}{\delta (\phi_{\star \star}, \phi_{\star})},\\
\nabla \mu^{n+\frac{1}{2}} \cdot \bn =0, \quad \nabla \phi_{\star\star} \cdot \bn = 0, \quad \mbox{ on } \partial \Omega.
\eea 
\right.
\eeq 
\item Step 2.2, update $\bu_{\star \star}$ via
\beq
\bu_{\star \star} = \bu_\star - \frac{\delta t}{\rho} \overline{\phi}^{n+\frac{1}{2}} \nabla \mu^{n+\frac{1}{2}}.
\eeq 
\end{itemize}
Since scheme \ref{Scheme:sch-1} is already easy to solve, we don't attempt this relaxation strategy in this paper.  Interested readers are encouraged to further explore it.
\end{rem}

\begin{thm}[Energy Stability]  \label{thm:sch1}
The proposed scheme \ref{Scheme:sch-1} is energy stable, in the sense that
\begin{multline}
\cE(\bu^{n+1}, \phi^{n+1}) - \cE(\bu^n , \phi^n) \leq  - \frac{\delta t}{2}  \Big[ \eta \| \nabla \frac{\bu_\star + \bu^n}{2}\|^2 +\eta  \| \nabla \frac{ \bu^{n+1} + \bu_{\star \star}}{2}\|^2 \\
 +  \| \sqrt{M(\overline{\phi}^{n+\frac{1}{4}})} \nabla \mu^{n+\frac{1}{4}}\|^2 +  \| \sqrt{M(\overline{\phi}^{n+\frac{3}{4}})} \nabla \mu^{n+\frac{3}{4}} \|^2 \Big], 
\end{multline}
where the energy is defined as
\beq
\cE(\bu, \phi) = E(\bu) + F(\phi), \quad E(\bu) = \int_\Omega \frac{\rho}{2} | \bu|^2 d\bx, \quad F(\phi) =\gamma \int_\Omega  \frac{\varepsilon}{2} |\nabla \phi|^2 + \frac{1}{\varepsilon} f(\phi) d\bx.
\eeq 
\end{thm}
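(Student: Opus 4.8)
The plan is to follow the total energy $\cE$ through the three sub-steps of Scheme \ref{Scheme:sch-1} and exploit that the Strang--Marchuk splitting isolates the dissipative operator $\cG_s$ in Steps~1 and~3 and the conservative operator $\cG_a$ in Step~2. Concretely, I would establish three relations and then add them: for Step~1, $\cE(\bu_\star,\phi_\star)-\cE(\bu^n,\phi^n)\le-\frac{\delta t}{2}\big(\eta\|\nabla\frac{\bu_\star+\bu^n}{2}\|^2+\|\sqrt{M(\overline{\phi}^{n+\frac{1}{4}})}\nabla\mu^{n+\frac{1}{4}}\|^2\big)$; for Step~2, $\cE(\bu_{\star\star},\phi_{\star\star})=\cE(\bu_\star,\phi_\star)$; and for Step~3, $\cE(\bu^{n+1},\phi^{n+1})-\cE(\bu_{\star\star},\phi_{\star\star})\le-\frac{\delta t}{2}\big(\eta\|\nabla\frac{\bu^{n+1}+\bu_{\star\star}}{2}\|^2+\|\sqrt{M(\overline{\phi}^{n+\frac{3}{4}})}\nabla\mu^{n+\frac{3}{4}}\|^2\big)$. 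Summing, the $\cG_a$-step contributes nothing and the two $\cG_s$-steps assemble exactly the four dissipation terms in the statement.

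For Step~1 I would treat the two sub-steps separately, since $\phi$ is frozen in Step~1.1 and $\bu$ in Step~1.2. Taking the $L^2$ inner product of the momentum equation \eqref{eq:sch1-s1-1} with $\frac{\bu_\star+\bu^n}{2}$ turns the discrete time derivative into $\frac{2}{\delta t}\big(E(\bu_\star)-E(\bu^n)\big)$, annihilates the convection term via the identity $b(\bv,\bu,\bu)=0$ with $\bu=\frac{\bu_\star+\bu^n}{2}\in\bH_0^1(\Omega)$, annihilates the pressure term after integration by parts using \eqref{eq:sch1-s1-2} and the no-slip condition, and leaves $-\eta\|\nabla\frac{\bu_\star+\bu^n}{2}\|^2$. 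Taking the inner product of the Cahn--Hilliard equation \eqref{eq:sch1-s2-s1} with $\mu^{n+\frac{1}{4}}$ gives $\frac{2}{\delta t}(\phi_\star-\phi^n,\mu^{n+\frac{1}{4}})=-\|\sqrt{M(\overline{\phi}^{n+\frac{1}{4}})}\nabla\mu^{n+\frac{1}{4}}\|^2$ after integrating by parts with $\nabla\mu^{n+\frac{1}{4}}\cdot\bn=0$; substituting the definition \eqref{eq:sch1-s2-s2} of $\mu^{n+\frac{1}{4}}$ and integrating the Laplacian term by parts (boundary terms vanish by $\nabla\phi_\star\cdot\bn=\nabla\phi^n\cdot\bn=0$) reproduces $\frac{\gamma\varepsilon}{2}(\|\nabla\phi_\star\|^2-\|\nabla\phi^n\|^2)$, while the nonlinear part equals $\frac{\gamma}{\varepsilon}\int_\Omega\big(f(\phi_\star)-f(\phi^n)\big)\,d\bx$ by the discrete chain-rule property \eqref{eq:bulk_potential_requirement}; together these equal $F(\phi_\star)-F(\phi^n)$, so adding the two sub-steps yields the Step~1 relation. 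Step~3 is the same argument, testing \eqref{eq:sch1-s3-1} against $\frac{\bu^{n+1}+\bu_{\star\star}}{2}$ (legitimate because $\bu_{\star\star}=0$ on $\partial\Omega$ by \eqref{sch1:step2-5}) and \eqref{eq:sch1-s3-4} against $\mu^{n+\frac{3}{4}}$.

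For Step~2 I would work with its equivalent coupled formulation \eqref{sch1:step2}, the decoupled rewriting \eqref{eq:sch1-s2X}--\eqref{eq:sch1-s2Y} having the same solution. Testing \eqref{sch1:step2-1} against $\frac{\bu_{\star\star}+\bu_\star}{2}$ produces $\frac{1}{\delta t}\big(E(\bu_{\star\star})-E(\bu_\star)\big)$ on the left (the pressure term drops by \eqref{sch1:step2-3} and the boundary condition, and there is no viscosity in this step) and $-\big(\overline{\phi}^{n+\frac{1}{2}}\nabla\mu^{n+\frac{1}{2}},\frac{\bu_{\star\star}+\bu_\star}{2}\big)$ on the right. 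Testing the transport equation \eqref{sch1:step2-4} against $\mu^{n+\frac{1}{2}}$ produces $\frac{1}{\delta t}\big(F(\phi_{\star\star})-F(\phi_\star)\big)$ on the left (again via \eqref{sch1:step2-2} and \eqref{eq:bulk_potential_requirement}) and, after integration by parts (the boundary integral vanishes because $\bu_{\star\star}=\bu_\star=0$ on $\partial\Omega$), $+\big(\overline{\phi}^{n+\frac{1}{2}}\frac{\bu_{\star\star}+\bu_\star}{2},\nabla\mu^{n+\frac{1}{2}}\big)$ on the right. Adding the two identities, the coupling terms cancel exactly and $E(\bu_{\star\star})+F(\phi_{\star\star})=E(\bu_\star)+F(\phi_\star)$.

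The main obstacle is Step~2: one must verify that the discretization of the reversible operator $\cG_a$ retains the exact antisymmetry between the momentum forcing $-\overline{\phi}^{n+\frac{1}{2}}\nabla\mu^{n+\frac{1}{2}}$ and the transport flux $\nabla\cdot\big(\frac{\bu_{\star\star}+\bu_\star}{2}\overline{\phi}^{n+\frac{1}{2}}\big)$. This succeeds only because the same extrapolant $\overline{\phi}^{n+\frac{1}{2}}$ and the same midpoint velocity $\frac{\bu_{\star\star}+\bu_\star}{2}$ appear in both equations, and because the incompressibility constraint is imposed on precisely that midpoint velocity so that the pressure Lagrange multiplier does no work. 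The remaining bookkeeping---vanishing of boundary integrals under the no-slip and no-flux conditions \eqref{eq:CHNS-boundary}, and conversion of $(\phi^{\mathrm{new}}-\phi^{\mathrm{old}},\,\delta f/\delta(\cdot,\cdot))$ into an increment of $\int_\Omega f$---is routine; if one only has the weaker inequality in place of \eqref{eq:bulk_potential_requirement}, the Step~1 and Step~3 relations become inequalities in the correct direction, which is exactly why the theorem is stated with $\le$. Adding the three relations completes the proof.
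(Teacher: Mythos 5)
Your proposal is correct and follows essentially the same route as the paper: test each sub-step against the appropriate midpoint velocity or chemical potential, use the skew-symmetry of $B$, the incompressibility of the midpoint velocity, the boundary conditions, and the discrete chain rule \eqref{eq:bulk_potential_requirement} to obtain the Step~1 and Step~3 dissipation identities and the exact energy conservation of the reversible Step~2 via the equivalent coupled form \eqref{sch1:step2}, then sum. Your observation that the antisymmetric cancellation in Step~2 hinges on the same extrapolant $\overline{\phi}^{n+\frac{1}{2}}$ and midpoint velocity appearing in both equations, and that a weaker inequality in place of \eqref{eq:bulk_potential_requirement} still yields the stated $\leq$, matches the paper's argument.
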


\begin{proof}
If we take inner product of \eqref{eq:sch1-s1-1} with $\frac{\delta t}{2} \frac{\bu_\star + \bu^n}{2}$ and apply the boundary conditions in \eqref{eq:sch1-s1-3}, we will have
\beq \label{eq:proof-tmp1}
E(\bu_\star) - E(\bu^n) = -\frac{\delta t}{2} (\frac{\bu_\star + \bu^n}{2}, \nabla p) -\frac{\delta t}{2}   \eta \| \nabla \frac{\bu_\star + \bu^n}{2}\|^2.
\eeq 
If we take inner product of \eqref{eq:sch1-s1-2} with $\frac{\delta t}{2} p$, we have
\beq \label{eq:proof-tmp2}
\frac{\delta t}{2} (p, \nabla \cdot  \frac{\bu_\star + \bu^n}{2}) = 0.
\eeq 
Then, adding the two equations in \eqref{eq:proof-tmp1} and \eqref{eq:proof-tmp2} above, we get
\beq
E(\bu_\star) - E(\bu^n) = - \frac{\delta t}{2} \eta \| \nabla \frac{\bu_\star + \bu^n}{2}\|^2,
\eeq 
by noticing the boundary condition \eqref{eq:sch1-s1-3} and $\bu^n \cdot \bn =0$.

Similarly, if we take inner product of \eqref{eq:sch1-s2-s1} with $\frac{\delta t}{2} \mu^{n+\frac{1}{4}}$, and inner product of  \eqref{eq:sch1-s2-s2} with $\phi_\star - \phi^n$, apply the boundary condition in \eqref{eq:sch1-s2-s3}, and use the constraint \eqref{eq:bulk_potential_requirement}, we have
\beq
F(\phi_\star) - F(\phi^n) =- \frac{\delta t}{2}  \| \sqrt{M(\overline{\phi}^{n+\frac{1}{4}})} \nabla \mu^{n+\frac{1}{4}} \|^2.
\eeq 
The two equations above give us
\beq \label{eq:tmp-add1}
\cE(\bu_\star , \phi_\star) - \cE(\bu^n, \phi^n) = -\frac{\delta t}{2} \Big[ \eta  \| \nabla \frac{\bu_\star + \bu^n}{2}\|^2 +  \| \sqrt{M(\overline{\phi}^{n+\frac{1}{4}})} \nabla \mu^{n+\frac{1}{4}} \|^2 \Big].
\eeq 

Given that \eqref{eq:sch1-s2X}-\eqref{eq:sch1-s2Y} is equivalent to \eqref{sch1:step2}. We take inner product of \eqref{sch1:step2-1} with $\delta t\frac{ \bu_{\star\star} + \bu_\star}{2}$, inner product of \eqref{sch1:step2-2} with $(\phi_{\star\star} - \phi_\star)$, inner product of \eqref{sch1:step2-3} with $p$ and inner product of \eqref{sch1:step2-4} with $\delta t \mu^{n+\frac{1}{2}}$, and utilize the boundary conditions in \eqref{sch1:step2-5}, we will get
\beq \label{eq:tmp-add2}
\cE(\bu_{\star \star} , \phi_{\star \star}) - \cE(\bu_\star, \phi_\star) = 0.
\eeq 

In a similar manner, we take inner product of \eqref{eq:sch1-s3-1} with $\frac{\delta t}{2} \frac{\bu^{n+1} + \bu_{\star \star}}{2}$,  \eqref{eq:sch1-s3-2} with $p$,  we will get
\beq \label{eq:proof1-s3-1}
E(\bu^{n+1}) - E(\bu_{\star \star})  = - \frac{\delta t}{2} \eta \| \nabla \frac{ \bu^{n+1} + \bu_{\star \star}}{2}\|^2 .
\eeq 
And if we take inner product of  \eqref{eq:sch1-s3-4} with $\frac{\delta t}{2} \mu^{n+\frac{3}{4}}$ and \eqref{eq:sch1-s3-5} with $\phi^{n+1} - \phi_{\star \star}$, we will obtain
\beq \label{eq:proof1-s3-2}
F(\phi^{n+1} ) - F(\phi_{\star \star}) = -\frac{\delta t}{2}  \|\sqrt{M(\overline{\phi}^{n+\frac{3}{4}})} \nabla \mu^{n+\frac{3}{4}} \|^2.
\eeq 
Adding \eqref{eq:proof1-s3-1} and \eqref{eq:proof1-s3-2}, we have
\beq \label{eq:tmp-add3}
\cE(\bu^{n+1}, \phi^{n+1}) - \cE(\bu_{\star \star}, \phi_{\star \star}) = -\frac{\delta t}{2} \Big[ \eta \| \nabla \frac{ \bu^{n+1} + \bu_{\star \star}}{2}\|^2 +  \|\sqrt{M(\overline{\phi}^{n+\frac{3}{4}})} \nabla \mu^{n+\frac{3}{4}} \|^2  \Big].
\eeq 

Overall, if we add the three equations in \eqref{eq:tmp-add1}, \eqref{eq:tmp-add2} and \eqref{eq:tmp-add3} together, the discrete energy law is obtained as
\begin{multline}
\cE(\bu^{n+1}, \phi^{n+1}) - \cE(\bu^n , \phi^n) \leq  - \frac{\delta t}{2}  \Big[ \eta \| \nabla \frac{\bu_\star + \bu^n}{2}\|^2 +\eta  \| \nabla \frac{ \bu^{n+1} + \bu_{\star \star}}{2}\|^2\\
  +  \| \sqrt{M(\overline{\phi}^{n+\frac{1}{4}})} \nabla \mu^{n+\frac{1}{4}}\|^2 + \|\sqrt{M(\overline{\phi}^{n+\frac{3}{4}})} \nabla \mu^{n+\frac{3}{4}} \|^2 \Big].
\end{multline}
\end{proof}

\subsection{An alternative  second-order decoupled numerical scheme based on a different operator splitting strategy}

Notice that the splitting of the mobility operator $\cG= \cG_a + \cG_s$ is not unique. Different splitting strategies will lead to different numerical algorithms. In particular, we can also introduce the following splitting
\beq \label{eq:Splitting-Case2}
\cG_a = 
\begin{pmatrix}
0 & -\phi \nabla \bullet  \\
-\nabla \cdot (\bullet \phi) & \nabla \cdot ( M(\phi) \nabla \bullet )\\
\end{pmatrix}, 
\quad 
\cG_s =
\begin{pmatrix}
\eta \Delta \bullet - B(\bu, \bullet ) & 0 \\
0 & 0
\end{pmatrix}.
\eeq 

In the meanwhile, if we plug in the splitting operators defined in \eqref{eq:Splitting-Case2}, we obtain the following second-order operator splitting scheme.

\begin{scheme} \label{Scheme:sch-2}
Given $(\bu^n, \phi^n)$, $(\bu^{n-1}, \phi^{n-1})$ and $\bu^n \cdot \bn=0$, we can obtain $(\bu^{n+1}, \phi^{n+1})$ in the following three steps:
\begin{itemize}
\item Step 1: In $[t_n, t_{n+\frac{1}{2}}]$, we set $\phi_\star = \phi^n$, and  solve $\bu_{\star}$ via 
\begin{subequations}  \label{eq:sch2-s1}
\begin{align}
&\label{eq:sch2-s1-1}\rho \frac{\bu_\star - \bu^n}{\delta t/2}  + B(\bar{\bu}^{n+\frac{1}{4}}, \frac{\bu_\star + \bu^n}{2})= -\nabla p + \eta \Delta \frac{\bu_\star + \bu^n}{2} , \\
&\label{eq:sch2-s1-2}\nabla \cdot \frac{\bu_\star +\bu^n}{2} = 0, \\
&\label{eq:sch2-s1-3}\bu_\star =0, \quad \mbox{ on } \partial \Omega.
\end{align}
\end{subequations}
\item Step 2, In $[t_n, t_{n+1}]$, we solve $(\bu_{\star\star}, \phi_{\star\star})$ via the following two sub-steps.
\begin{itemize}
\item Step 2.1, Solve $(\phi_{\star \star}, p)$ via the following system
\begin{subequations} \label{eq:sch2-s2X}
\begin{align}
&\frac{1}{\delta t}( \phi_{\star\star} - \phi_\star) + \nabla \cdot ( \bu_\star \overline{\phi}^{n+\frac{1}{2}}) = \\
& \nonumber  \nabla \cdot ((M(\overline{\phi}^{n+\frac{1}{2}}) + \frac{\delta t}{2 \rho}  (\overline{\phi}^{n+\frac{1}{2}})^2)  \nabla \mu^{n+\frac{1}{2}}) 
+ \frac{\delta t}{2\rho} \nabla \cdot (\overline{\phi}^{n+\frac{1}{2}} \nabla p), \\
&\mu^{n+\frac{1}{2}} = -\frac{\gamma \varepsilon}{2}(\Delta \phi_{\star\star} + \Delta \phi_{\star}) +\frac{\gamma}{\varepsilon} \frac{\delta f}{\delta (\phi_{\star \star}, \phi_{\star})},\\
&-\Delta p - \nabla \cdot ( \overline{\phi}^{n+\frac{1}{2}} \nabla \mu^{n+\frac{1}{2}}) = 0, \\
&\nabla \mu^{n+\frac{1}{2}} \cdot \bn =0, \quad \nabla \phi^{n+1} \cdot \bn = 0, \quad  \nabla p \cdot \bn = 0, \quad \mbox{ on } \partial \Omega.
\end{align}
\end{subequations}
\item Step 2.2, Update $\bu_{\star \star}$ via
\beq  \label{eq:sch2-s2Y}
\bu_{\star \star} = \bu_\star - \frac{\delta t}{\rho} (\nabla p + \overline{\phi}^{n+\frac{1}{2}} \nabla \mu^{n+\frac{1}{2}}).
\eeq
\end{itemize}

\item Step 3, In $[t_{n+\frac{1}{2}}, t_{n+1}]$, we set $\phi^{n+1} = \phi_{\star \star}$, and solve $\bu^{n+1}$ via
\begin{subequations} \label{eq:sch2-s3}
\begin{align}
&\label{eq:sch2-s3-1} \rho \frac{\bu^{n+1}- \bu_{\star \star}}{\delta t/2}  + B(\bar{\bu}^{n+\frac{3}{4}}, \frac{\bu^{n+1} + \bu_{\star\star}}{2})= -\nabla p + \eta \Delta \frac{\bu_{\star \star} + \bu^{n+1}}{2} , \\
&\label{eq:sch2-s3-2}\nabla \cdot \bu^{n+1}= 0, \\
&\label{eq:sch2-s3-3} \bu^{n+1} =0, \quad \mbox{ on } \partial \Omega.
\end{align}
\end{subequations}
\end{itemize}
\end{scheme}

The scheme \ref{Scheme:sch-2} is computationally efficient than the scheme \ref{Scheme:sch-1} since the phase-field equation only needs to be solved once.  In the scheme \ref{Scheme:sch-1}, the phase-field equations have to be solved three times.

\begin{rem}
Notice that   \eqref{eq:sch2-s2X}-\eqref{eq:sch2-s2Y} is derived from the problem
\begin{subequations} \label{eq:sch2-s2}
\begin{align} 
&\label{eq:sch2-s2-1} \rho \frac{\bu_{\star \star}  - \bu_\star}{\delta t} = -\nabla p - \bar{\phi}^{n+\frac{1}{2}} \nabla \mu^{n+\frac{1}{2}}, \\
&\label{eq:sch2-s2-2} \nabla \cdot \bu_{\star \star}= 0, \\
&\label{eq:sch2-s2-3} \frac{\phi_{\star \star} - \phi_\star }{\delta t} = -\nabla \cdot ( \frac{ \bu_{\star \star} + \bu_\star}{2} \bar{\phi}^{n+\frac{1}{2}}) + \nabla \cdot ( M(\overline{\phi}^{n+\frac{1}{2}}) \nabla  \mu^{n+\frac{1}{2}}), \\
&\label{eq:sch2-s2-4} \mu^{n+\frac{1}{2}} = -\frac{\gamma \varepsilon}{2}(\Delta \phi_{\star\star} + \Delta \phi_{\star}) +\frac{\gamma}{\varepsilon} \frac{\delta f}{\delta (\phi_{\star \star}, \phi_{\star})},\\
&\label{eq:sch2-s2-5} \nabla \mu^{n+\frac{1}{2}} \cdot \bn =0, \quad \nabla \phi_{\star\star} \cdot \bn = 0, \quad \bu_{\star\star} =0, \quad \mbox{ on } \partial \Omega.
\end{align}
\end{subequations}
\end{rem}

\begin{rem}
Similarly, as discussed in Remark \ref{rem:Relaxation},  we can introduce relaxation in Step 2. As a minor modification, we do not necessarily need to restrict the in-compressibility during the operator splitting. With that in mind, we can obtain a relaxed version of Step 2, such that a simplified numerical scheme can be proposed. This idea will not be further elaborated due to space limitation.
\end{rem}

\begin{thm}  \label{thm:sch2}
The proposed scheme \ref{Scheme:sch-2} is energy stable, in the sense that
\begin{multline}
\cE(\bu^{n+1}, \phi^{n+1}) - \cE(\bu^n , \phi^n) \leq \\
  - \frac{\delta t}{2}  \Big[ \eta \| \nabla \frac{\bu_\star + \bu^n}{2}\|^2 +\eta  \| \nabla \frac{ \bu^{n+1} + \bu_{\star \star}}{2}\|^2  + 2  \| \sqrt{M(\overline{\phi}^{n+\frac{1}{2}})} \nabla \mu^{n+\frac{1}{2}} \|^2 \Big], 
\end{multline}
where the energy is defined as
\beq
\cE(\bu, \phi) = E(\bu) + F(\phi), \quad E(\bu) = \int_\Omega \frac{\rho}{2} | \bu|^2 d\bx, \quad F(\phi) =\gamma \int_\Omega  \frac{\varepsilon}{2} |\nabla \phi|^2 + \frac{1}{\varepsilon} f(\phi) d\bx.
\eeq 
\end{thm}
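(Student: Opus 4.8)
The plan is to reuse the energy-ledger argument from the proof of Theorem~\ref{thm:sch1}: I would march through the three Strang--Marchuk sub-steps in order, test each sub-problem against the multiplier that turns its left-hand side into an exact increment of $\cE$, and then add the three resulting relations. The feature distinguishing Scheme~\ref{Scheme:sch-2} from Scheme~\ref{Scheme:sch-1} is that the splitting \eqref{eq:Splitting-Case2} places the Cahn--Hilliard mobility into the reversible block $\cG_a$, so the phase field is frozen in Steps~1 and~3 and the \emph{entire} $\sqrt{M}\nabla\mu$ dissipation must be squeezed out of Step~2. This makes Steps~1 and~3 strictly easier than in Scheme~\ref{Scheme:sch-1}, at the price of concentrating the bookkeeping in Step~2.

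For Step~1 I would take the $L^2$ inner product of \eqref{eq:sch2-s1-1} with $\frac{\delta t}{2}\frac{\bu_\star+\bu^n}{2}$ and of \eqref{eq:sch2-s1-2} with $\frac{\delta t}{2}p$, then add. The convection term drops because $b(\bar{\bu}^{n+\frac{1}{4}},\frac{\bu_\star+\bu^n}{2},\frac{\bu_\star+\bu^n}{2})=0$; the pressure term drops because $\nabla\cdot\frac{\bu_\star+\bu^n}{2}=0$ together with $\bu_\star|_{\partial\Omega}=0$ and $\bu^n\cdot\bn=0$; integration by parts converts the viscous term into $-\frac{\delta t}{2}\eta\|\nabla\frac{\bu_\star+\bu^n}{2}\|^2$. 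Since $\phi_\star=\phi^n$ the free energy $F$ does not change, so $\cE(\bu_\star,\phi_\star)-\cE(\bu^n,\phi^n)=-\frac{\delta t}{2}\eta\|\nabla\frac{\bu_\star+\bu^n}{2}\|^2$. Step~3 is the identical computation with $(\bu^n,\bu_\star,\bar{\bu}^{n+\frac{1}{4}})$ replaced by $(\bu_{\star\star},\bu^{n+1},\bar{\bu}^{n+\frac{3}{4}})$; the pressure term there vanishes via $\nabla\cdot\bu^{n+1}=0$ from \eqref{eq:sch2-s3-2} and $\nabla\cdot\bu_{\star\star}=0$ from \eqref{eq:sch2-s2-2}, and because $\phi^{n+1}=\phi_{\star\star}$ one gets $\cE(\bu^{n+1},\phi^{n+1})-\cE(\bu_{\star\star},\phi_{\star\star})=-\frac{\delta t}{2}\eta\|\nabla\frac{\bu^{n+1}+\bu_{\star\star}}{2}\|^2$.

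Step~2 carries the real content, and I would argue through the equivalent coupled formulation \eqref{eq:sch2-s2} rather than the decoupled presentation \eqref{eq:sch2-s2X}--\eqref{eq:sch2-s2Y}. I would test \eqref{eq:sch2-s2-1} with $\delta t\,\frac{\bu_{\star\star}+\bu_\star}{2}$, \eqref{eq:sch2-s2-4} with $\phi_{\star\star}-\phi_\star$, \eqref{eq:sch2-s2-2} with $p$, and \eqref{eq:sch2-s2-3} with $\delta t\,\mu^{n+\frac{1}{2}}$, then add the four. The left-hand sides assemble into $(E(\bu_{\star\star})-E(\bu_\star))+(F(\phi_{\star\star})-F(\phi_\star))$, where the bulk part of $F$ is produced by the discrete chain rule \eqref{eq:bulk_potential_requirement} (with only its weaker one-sided form this identity relaxes to the stated inequality) and the gradient part by integration by parts. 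The pressure contribution dies because $\nabla\cdot\bu_{\star\star}=0$ and $\nabla\cdot\bu_\star=0$, the latter inherited inductively from $\nabla\cdot\bu^n=0$ and \eqref{eq:sch2-s1-2}. The decisive cancellation is that the advection term of the phase equation, $\delta t\,(\overline{\phi}^{n+\frac{1}{2}}\nabla\mu^{n+\frac{1}{2}},\frac{\bu_{\star\star}+\bu_\star}{2})$, is exactly the negative of the $\overline{\phi}\nabla\mu$ forcing produced by the momentum equation; this is the reversible kinetic--free-energy exchange encoded in the antisymmetry of $\cG_a$, and making this cancellation transparent is the one place I expect to need care. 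What remains is the mobility term, which integration by parts turns into $-\delta t\,\|\sqrt{M(\overline{\phi}^{n+\frac{1}{2}})}\nabla\mu^{n+\frac{1}{2}}\|^2$, giving $\cE(\bu_{\star\star},\phi_{\star\star})-\cE(\bu_\star,\phi_\star)\le-\delta t\,\|\sqrt{M(\overline{\phi}^{n+\frac{1}{2}})}\nabla\mu^{n+\frac{1}{2}}\|^2$.

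Finally I would add the three relations: the intermediate energies $\cE(\bu_\star,\phi_\star)$ and $\cE(\bu_{\star\star},\phi_{\star\star})$ telescope, and noting $-\delta t=-\frac{\delta t}{2}\cdot 2$ for the mobility term reproduces the claimed bound verbatim. As in Theorem~\ref{thm:sch1}, nothing here is analytically deep once the multipliers are chosen correctly; the only real pitfall is mislabeling a multiplier so that the pressure term or the $\overline{\phi}\nabla\mu$ coupling fails to cancel, and there is no subtlety beyond the summation-by-parts identities and property \eqref{eq:bulk_potential_requirement}.
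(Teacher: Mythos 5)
Your proposal is correct and follows essentially the same route as the paper's proof: testing each sub-step with the multipliers $\frac{\delta t}{2}\frac{\bu_\star+\bu^n}{2}$, $p$, $\delta t\,\mu^{n+\frac{1}{2}}$, $\phi_{\star\star}-\phi_\star$, exploiting the skew-symmetry of $B$, the cancellation of the $\overline{\phi}^{n+\frac{1}{2}}\nabla\mu^{n+\frac{1}{2}}$ exchange terms in Step 2, the discrete chain rule \eqref{eq:bulk_potential_requirement}, and then telescoping the three energy identities. Your explicit inductive justification that $\nabla\cdot\bu_\star=0$ (needed to kill the pressure term in Step 2) is a point the paper leaves implicit, but it does not change the argument.
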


\begin{proof}
The proof is similar to the proof in Theorem \ref{thm:sch1}. Here we only show the major steps. If we take inner product of \eqref{eq:sch2-s1-1} with $\frac{\delta t}{2} \frac{ \bu_{\star} + \bu^n}{2}$,  we will have
\beq \label{eq:sch2-tmpA}
E(\bu_\star) - E(\bu^n) = -\frac{\delta t}{2} (\nabla p, \frac{\bu_\star + \bu^n}{2}) -\frac{\delta t}{2} \eta \| \nabla \frac{\bu_\star + \bu^n}{2}\|^2.
\eeq  
If we take the inner product of \eqref{eq:sch2-s1-2} with $\frac{\delta t}{2} p$, we have
\beq \label{eq:sch2-tmpB}
\frac{\delta t}{2} (p , \nabla \cdot \frac{ \bu_\star + \bu^n}{2}) = 0.
\eeq 
Adding the equations \eqref{eq:sch2-tmpA} and \eqref{eq:sch2-tmpB} above, we get
\beq \label{eq:sch2-tmp1}
E(\bu_\star)  - E(\bu^n) = -\frac{\delta t}{2} \eta \| \nabla \frac{\bu_\star + \bu^n}{2}\|^2.
\eeq 
Given the notations $\phi_\star = \phi^n$,  \eqref{eq:sch2-tmp1} is equivalent to
\beq \label{eq:sch2-E1}
\cE(\bu_\star, \phi_\star) - \cE(\bu^n, \phi^n) = -\frac{\delta t}{2} \eta \| \nabla \frac{\bu_\star + \bu^n}{2}\|^2.
\eeq  
Similarly, if we take inner product of  \eqref{eq:sch2-s2-1} with $\delta t \frac{\bu_{\star \star} + \bu_\star}{2}$, we will have
\beq \label{eq:sch2-s2-tmp1}
E(\bu_{\star \star}) - E(\bu_{\star}) = -\frac{\delta t}{2} (\frac{\bu_{\star \star} + \bu_{\star}}{2}, \nabla p) -  \delta t ( \frac{\bu_{\star \star} + \bu_{\star}}{2}, \overline{\phi}^{n+\frac{1}{2}} \nabla \mu^{n+\frac{1}{2}}).
\eeq 
If we take inner product of \eqref{eq:sch2-s2-2} with $\frac{\delta t}{2} p$, we have
\beq\label{eq:sch2-s2-tmp2}
\frac{\delta t}{2} (p, \nabla \cdot  \frac{\bu_{\star \star} + \bu_{\star}}{2}) =0.
\eeq 
If we take inner product of \eqref{eq:sch2-s2-3} with $\delta t \mu^{n+\frac{1}{2}}$,  and \eqref{eq:sch2-s2-4} with $\delta t (\phi_{\star \star} - \phi_\star)$, we have
\beq \label{eq:sch2-s2-tmp3}
F(\phi_{\star \star}) - F(\phi_\star) = -\delta t (\nabla \cdot ( \frac{\bu_{\star \star} + \bu_{\star}}{2} \overline{\phi}^{n+\frac{1}{2}}), \mu^{n+\frac{1}{2}}) - \delta t  \| \sqrt{M(\overline{\phi}^{n+\frac{1}{2}})}\nabla \mu^{n+\frac{1}{2}} \|^2.
\eeq 
Adding the equations \eqref{eq:sch2-s2-tmp1}, \eqref{eq:sch2-s2-tmp2} and \eqref{eq:sch2-s2-tmp3}  above, we have
\beq \label{eq:sch2-E2}
\cE(\bu_{\star \star}, \phi_{\star \star}) - \cE(\bu_{\star}, \phi_{\star}) = - \delta t  \|\sqrt{M(\overline{\phi}^{n+\frac{1}{2}})} \nabla \mu^{n+\frac{1}{2}} \|^2.
\eeq 

Similarly, for Step 3, if we take inner product of \eqref{eq:sch2-s3-1} with $\frac{\delta t}{2} \frac{ \bu^{n+1} + \bu_{\star \star}}{2}$,  and take inner product of \eqref{eq:sch2-s3-2} with $\delta p$, we have
\beq \label{eq:sch2-E3}
\cE(\bu^{n+1}, \phi^{n+1}) - \cE(\bu_{\star \star}, \phi_{\star\star}) = - \frac{\delta t}{2} \eta \| \nabla \frac{ \bu^{n+1} + \bu_{\star \star}}{2}\|^2.
\eeq 

Adding the equations \eqref{eq:sch2-E1}, \eqref{eq:sch2-E2} and \eqref{eq:sch2-E3} together, we finally obtain the energy dissipation law
\begin{multline}
\cE(\bu^{n+1}, \phi^{n+1}) - \cE(\bu^n , \phi^n) \leq  \\
 - \frac{\delta t}{2}  \Big[ \eta \| \nabla \frac{\bu_\star + \bu^n}{2}\|^2 +\eta  \| \nabla \frac{ \bu^{n+1} + \bu_{\star \star}}{2}\|^2  + 2  \| \sqrt{M(\overline{\phi}^{n+\frac{1}{2}})}\nabla \mu^{n+\frac{1}{2}} \|^2 \Big].
\end{multline}

\end{proof}

\subsection{Strategies to solve the Navier-Stokes equation}
This sub-section further discusses how the Navier-Stoke portion of the proposed scheme in Step 1 and Step 3 can be solved appropriately. 
Recall the 
Crank-Nicolson (CN) type  scheme for the Navier-Stokes equation
\beq \label{eq:NS-part}
\left\{
\bea{l}
\rho \frac{\bu^{n+1} - \bu^n}{\delta t} + \rho B(\overline{\bu}^{n+\frac{1}{2}}, \bu^{n+\frac{1}{2}})  =  - \nabla p^{n+\frac{1}{2}} + \eta \Delta \bu^{n+\frac{1}{2}} +f^{n+\frac{1}{2}}, \\
\nabla \cdot \bu^{n+\frac{1}{2}}  =  0.
\eea 
\right.
\eeq 
Here $f$ is the external force term.
This paper introduces two strategies: (1) the preconditioner method; (2) the velocity projection method.

\subsubsection{Projection method as a preconditioner for the Naver-Stokes equation}

First of all, we discuss the precondition strategy. 
Notice $p^{n+\frac{1}{2}}$ is a Lagrangian multiplier, so it is not advisable to do time marching for $p$, saying $p^{n+\frac{1}{2}}=\frac{1}{2}(p^n + p^{n+1})$. Instead, we treat $p^{n+\frac{1}{2}}$ sa a variable and solve it directly.  Therefore, the Navier-Stokes portion in \eqref{eq:NS-part} can be written as
\beq
\left[
\bea{ll}
\frac{2\rho}{\delta t}- \eta \Delta + \rho B^{n+1} &  \nabla  \\
\nabla \cdot   &  0
\eea 
\right]
\left(
\bea{ll}
\bu^{n+\frac{1}{2}} \\
p^{n+\frac{1}{2}}
\eea
\right)
=
\left[
\bea{ll}
\frac{2\rho}{\delta t}+ \rho B^n &  0  \\
0  &  0
\eea 
\right]
\left(
\bea{ll}
\bu^n \\
0
\eea
\right)
+
\left[
\bea{l}
f^{n+\frac{1}{2}} \\
0
\eea 
\right].
\eeq 
Here $B^{n+1}$ and $B^n$ are schematically to represent the implicit and explicit parts in the convection operator $B(\overline{\bu}^{n+\frac{1}{2}}, \bu^{n+\frac{1}{2}})$. To solve the system above, one need an efficient preconditioner.  In this paper, we utilize the idea in \cite{GriffithJCP2009} for solving scheme \ref{Scheme:sch-1} and scheme \ref{Scheme:sch-2}. Here we briefly explain how the preconditioner can be constructed.

Recall the decoupled projection scheme for the Navier-Stokes equation in two steps:
\begin{itemize}
\item Step 1, solve the intermediate velocity field via
\beq
2\rho  \frac{\tilde{\bu}^{n+\frac{1}{2}}  - \bu^n}{\delta t}   + \rho B(\overline{\bu}^{n+\frac{1}{2}}, \tilde{\bu}^{n+\frac{1}{2}})=\eta \Delta \tilde{\bu}^{n+\frac{1}{2}} + f^{n+\frac{1}{2}},
\eeq 
where $\tilde{\bu}^{n+\frac{1}{2}} = \frac{1}{2}( \tilde{\bu}^{n+1}+\bu^n)$.
\item Step 2, solve the velocity field via the projection
\beq
\bea{rcl}
&& 2\rho \frac{ \bu^{n+\frac{1}{2}}- \tilde{\bu}^{n+\frac{1}{2}}}{\delta t} = - \nabla \psi, \\
&&  \nabla \cdot \bu^{n+1} = 0.
\eea
\eeq 
\end{itemize}
We can further rewrite Step 1 as 
\beq
\left[
\bea{ll}
\frac{2\rho}{\delta t}-\eta \Delta + \rho B^{n+1} &  0  \\
0   &  \bI
\eea 
\right]
\left(
\bea{ll}
\tilde{\bu}^{n+\frac{1}{2}} \\
0
\eea
\right)
=
\left[
\bea{ll}
\frac{2\rho}{\delta t} + \rho B^n &  0  \\
0   &  0
\eea 
\right]
\left(
\bea{ll}
\bu^n \\
0
\eea
\right)
+
\left[
\bea{l}
f^{n+\frac{1}{2}} \\
0
\eea 
\right],
\eeq 
For Step 2, it can be decomposed into
\begin{subequations} 
\begin{align} 
&-\Delta \psi = - \frac{2\rho}{\delta t} \nabla \cdot  \tilde{\bu}^{n+\frac{1}{2}}, \quad \\
& \bu^{n+\frac{1}{2}} = \tilde{\bu}^{n+\frac{1}{2}} - \frac{\delta t}{2\rho} \nabla \psi, \quad \\
& \nabla p^{n+\frac{1}{2}} = \nabla \psi - \frac{\delta t}{\rho} \frac{\eta}{2} \Delta \nabla \psi. 
\end{align}
\end{subequations} 
Then the corresponding operator forms are given as
\beq
\left[
\bea{ll}
\bI &  0  \\
0 &  -\Delta 
\eea 
\right]
\left(
\bea{ll}
\tilde{\bu}^{n+\frac{1}{2}} \\
\psi
\eea
\right)
=
\left[
\bea{ll}
\bI &  0  \\
-\frac{2\rho}{\delta t}\nabla \cdot    &  -\frac{\rho}{\delta t}
\eea 
\right]
\left(
\bea{ll}
\tilde{\bu}^{n+\frac{1}{2}} \\
0
\eea
\right),
\eeq

\beq
\left(
\bea{ll}
\bu^{n+1} \\
p^{n+\frac{1}{2}}
\eea
\right)
=
\left[
\bea{ll}
\bI &  -\frac{\delta t}{ 2\rho} \nabla   \\
0  &  \bI - \frac{\delta t}{\rho} \frac{\eta}{2} \Delta
\eea 
\right]
\left(
\bea{ll}
\tilde{\bu}^{n+1} \\
\psi
\eea
\right).
\eeq

Therefore, the two-step procedure can be written in an operator form as 
\beq
\mathcal{A} 
\left(
\bea{ll}
\bu^{n+\frac{1}{2}} \\
p^{n+\frac{1}{2}}
\eea
\right)
=\left[
\bea{ll}
\frac{2\rho}{\delta t} + \rho B^{n} & 0  \\
0  &  0
\eea 
\right]
\left(
\bea{ll}
\bu^n \\
0
\eea
\right) +
\left[
\bea{l}
f^{n+\frac{1}{2}} \\
0
\eea 
\right].
\eeq 
with the inverse of the linear operator given by
\beq \label{eq:A-inverse}
\mathcal{A}^{-1} = 
\left[
\bea{ll}
\bI &  -\frac{\delta t}{2 \rho} \nabla   \\
0  &  \bI - \frac{\delta t}{\rho}\frac{\eta}{2} \Delta 
\eea 
\right]
\left[
\bea{ll}
\bI &  0  \\
0   &   -\Delta ^{-1}
\eea 
\right]
\left[
\bea{ll}
\bI &  0  \\
-\frac{2\rho}{\delta t}\nabla \cdot    &  -\frac{\rho}{\delta t}
\eea 
\right] 
\left[
\bea{ll}
\Big( \frac{2\rho}{\delta t}-\eta \Delta + \rho B^{n+1} \Big)^{-1} &  0  \\
0   &  \bI
\eea 
\right].
\eeq

In other words, the operator in \eqref{eq:A-inverse} is a good preconditioner. Since the inverse of $B$ is non-trivial, we use the following operator 
\beq
\left[
\bea{ll}
\bI &  -\frac{\delta t}{2 \rho} \nabla   \\
0  &  \bI - \frac{\delta t}{\rho}\frac{\eta}{2} \Delta 
\eea 
\right]
\left[
\bea{ll}
\bI &  0  \\
0   &   -\Delta ^{-1}
\eea 
\right]
\left[
\bea{ll}
\bI &  0  \\
-\frac{2\rho}{\delta t}\nabla \cdot    &  -\frac{\rho}{\delta t}
\eea 
\right] 
\left[
\bea{ll}
\Big( \frac{2\rho}{\delta t}-\eta \Delta  \Big)^{-1} &  0  \\
0   &  \bI
\eea 
\right].
\eeq 
as the preconditioner for solving the Navier-Stokes equations in Step 1 and Step 3 of schemes \ref{Scheme:sch-1} and \ref{Scheme:sch-2}. And it turns out to be effective numerically.

\subsubsection{Velocity projection to decouple the Navier-Stokes equation}
In our second approach, we can further decouple the velocity and pressure fields in Step 1 and Step 3, using the classical velocity projection method. And the energy stability still holds. Specifically, for the CN scheme of the NS equation, we can instead, introduce the projection strategy, by solving several Poisson-type equation sequentially. The corresponding energy law can also be derived. The equation \eqref{eq:NS-part} can be approximated by the the following two steps with second-order accuracy.
\begin{itemize}
\item Step 1: solve the intermediate velocity field
\beq
\rho \frac{\tilde{\bu}^{n+1} - \bu^n}{\delta t} + \rho B(\overline{\bu}^{n+\frac{1}{2}}, \tilde{\bu}^{n+\frac{1}{2}})  =  - \nabla p^n + \eta \Delta \tilde{\bu}^{n+\frac{1}{2}} +f^{n+\frac{1}{2}}. \\
\eeq 
\item Step 2: update the velocity field
\beq
\bea{l}
\frac{\rho}{\delta t} (\bu^{n+1} - \tilde{\bu}^{n+1}) = -\frac{1}{2} \nabla (p^{n+1} - p^n). \\
\nabla \cdot \bu^{n+1} = 0.
\eea 
\eeq 
\end{itemize}

This leads to a new variant of second-order decoupled numerical schemes for the CHNS system in \eqref{eq:CHNS}-\eqref{eq:CHNS-boundary}.

\begin{scheme} \label{Scheme:sch-3}
Given $(\bu^n, \phi^n)$ and $(\bu^{n-1}, \phi^{n-1})$, we can obtain $(\bu^{n+1}, \phi^{n+1})$ in the following three steps:
\begin{itemize}
\item Step 1: in $[t_n, t_{n+\frac{1}{2}}]$, we set $\phi_\star = \phi^n$ and solve $\bu_{\star}$ via 

\begin{itemize}
\item Step 1.1, get $\tilde{\bu}_\star$ from
\begin{subequations}   \label{eq:sch3-s1X}
\begin{align}
&\label{eq:sch3-s1-1} \rho \frac{\tilde{\bu}_\star - \bu^n}{\delta t/2}  + B(\bar{\bu}^{n+\frac{1}{4}}, \frac{\bu_\star + \bu^n}{2})= -\nabla p^n + \eta \Delta \frac{\tilde{\bu}_\star + \bu^n}{2}, \\
&\label{eq:sch3-s1-2} \tilde{\bu}_\star =0, \quad \mbox{ on } \partial \Omega.
\end{align}
\end{subequations}
\item Step 1.2, get $p_\star$ from
\begin{subequations}    \label{eq:sch3-s1Y}
\begin{align}
& \label{eq:sch3-s1-3} \Delta p_\star = \frac{4\rho}{\delta t} \nabla \cdot \tilde{\bu}_\star + \Delta p^n, \\
& \label{eq:sch3-s1-4} \nabla p_\star \cdot \bn = 0, \mbox{ on } \partial \Omega.
\end{align}
\end{subequations}
\item Step 1.3, get $\bu_\star$  from
\beq \label{eq:sch3-s1Z}
\bu_\star = \tilde{\bu}_\star - \frac{\delta t}{4\rho}\nabla (p_\star - p^n).
\eeq 
\end{itemize}

\item Step 2, in $[t_n, t_{n+1}]$, we solve $(\bu_{\star\star}, \phi_{\star\star})$ via two sub-steps.
\begin{itemize}
\item Step 2.1, solve $(\phi_{\star \star}, p)$ via the following system
\begin{subequations} \label{eq:sch3-s2X}
\begin{align}
&\frac{\phi_{\star \star} - \phi_\star}{\delta t}+ \nabla \cdot ( \bu_\star \overline{\phi}^{n+\frac{1}{2}}) = \\
& \nonumber \nabla \cdot ((M(\overline{\phi}^{n+\frac{1}{2}}) + \frac{\delta t (\overline{\phi}^{n+\frac{1}{2}})^2}{2 \rho}  )  \nabla \mu^{n+\frac{1}{2}}) 
+ \nabla \cdot ( \frac{\delta t \overline{\phi}^{n+\frac{1}{2}} }{2\rho}\nabla p), \\
&\mu^{n+\frac{1}{2}} = -\frac{\gamma \varepsilon}{2}(\Delta \phi_{\star\star} + \Delta \phi_{\star}) +\frac{\gamma}{\varepsilon} \frac{\delta f}{\delta (\phi_{\star \star}, \phi_{\star})},\\
&-\Delta p - \nabla \cdot ( \overline{\phi}^{n+\frac{1}{2}} \nabla \mu^{n+\frac{1}{2}}) = 0, \\
&\nabla \mu^{n+\frac{3}{4}} \cdot \bn =0, \quad \nabla \phi_{\star \star} \cdot \bn = 0,  \quad  \nabla p \cdot \bn = 0, \quad \mbox{ on } \partial \Omega.
\end{align}
\end{subequations}
\item Step 2.2, update $\bu_{\star \star}$ via
\beq  \label{eq:sch3-s2Y}
\bu_{\star \star} = \bu_\star - \frac{\delta t}{\rho} (\nabla p + \overline{\phi}^{n+\frac{1}{2}} \nabla \mu^{n+\frac{1}{2}}).
\eeq
\end{itemize}

\item  Step 3, in $[t_{n+\frac{1}{2}}, t_{n+1}]$, we set $\phi^{n+1} =\phi_{\star \star}$, and solve $\bu^{n+1}$ via the follow three sub-steps.
\begin{itemize}
\item Step 3.1,  get $\tilde{\bu}^{n+1}$ from
\begin{subequations}   \label{eq:sch3-s3X}
\begin{align}
& \label{eq:sch3-s3-1} \rho \frac{ \tilde{\bu}^{n+1}- \bu_{\star \star}}{\delta t/2}  + B(\bar{\bu}^{n+\frac{3}{4}}, \frac{\tilde{\bu}^{n+1} + \bu_{\star\star}}{2})= -\nabla p_\star + \eta \Delta \frac{\bu_{\star \star} + \tilde{\bu}^{n+1}}{2} , \\
& \label{eq:sch3-s3-2} \tilde{\bu}^{n+1}= 0\quad \mbox{ on } \partial \Omega.
\end{align}
\end{subequations}
\item Step 3.2, get $p^{n+1}$ from
\begin{subequations}   \label{eq:sch3-s3Y}
\begin{align}
& \label{eq:sch3-s3-3} \Delta p^{n+1} = \frac{4\rho}{\delta t} \nabla \cdot \tilde{\bu}^{n+1} + \Delta p_\star, \\
& \label{eq:sch3-s3-4} \nabla p^{n+1} \cdot \bn = 0, \mbox{ on } \partial \Omega.
\end{align}
\end{subequations}
\item Step 3.3, get $\bu^{n+1}$ from
\beq \label{eq:sch3-s3Z}
\bu^{n+1} = \tilde{\bu}^{n+1} - \frac{\delta t}{4 \rho}\nabla (p^{n+1} - p_\star).
\eeq 
\end{itemize}
\end{itemize}
\end{scheme}

\begin{rem}
Note the decoupled steps in \eqref{eq:sch3-s1-3}-\eqref{eq:sch3-s1Z} are from the general scheme
\beq \label{eq:sch3-s1-New}
\frac{\rho}{\delta t}(\bu_\star -  \tilde{\bu}_\star) = - \frac{1}{2} (\nabla p_\star  -\nabla p^n). 
\eeq 
Similarly, the steps in \eqref{eq:sch3-s3-3}-\eqref{eq:sch3-s3Z} are from the general scheme
\beq \label{eq:sch3-s3-New}
\frac{\rho}{\delta t}( \bu^{n+1} - \tilde{\bu}_{\star \star}) = - \frac{1}{2} (\nabla p^{n+1}  -\nabla p_\star). 
\eeq
\end{rem}


\begin{rem}
We further comment on Step 2. Based on the operator splitting strategy, in $[t_n, t_{n+1}]$, we solve $(\bu_{\star\star}, \phi_{\star\star})$ via
\begin{subequations}   \label{eq:sch3-s2}
\begin{align}
& \label{eq:sch3-s2-1} \rho \frac{\bu_{\star \star}  - \bu_\star}{\delta t} = -\nabla p - \bar{\phi}^{n+\frac{1}{2}} \nabla \mu^{n+\frac{1}{2}}, \\
& \label{eq:sch3-s2-2} \mu^{n+\frac{1}{2}} = -\frac{\gamma \varepsilon}{2} (\Delta \phi_{\star\star} + \Delta \phi_{\star}) + \frac{\gamma}{\varepsilon} \frac{\delta f}{\delta (\phi_{\star \star}, \phi_{\star})},\\
& \label{eq:sch3-s2-3} \nabla \cdot \frac{\bu_{\star \star} + \bu_\star}{2}= 0, \\
& \label{eq:sch3-s2-4}\frac{\phi_{\star \star} - \phi_\star }{\delta t} = -\nabla \cdot ( \frac{ \bu_{\star \star} + \bu_\star}{2} \bar{\phi}^{n+\frac{1}{2}}) + \nabla \cdot ( M(\overline{\phi}^{n+\frac{1}{2}}) \nabla  \mu^{n+\frac{1}{2}}), \\
& \label{eq:sch3-s2-5} \nabla \mu^{n+\frac{1}{2}} \cdot \bn =0, \quad \nabla \phi_{\star\star} \cdot \bn = 0, \quad \bu_{\star\star} =0, \quad \mbox{ on } \partial \Omega.
\end{align}
\end{subequations}
Decoupling the velocity field from the phase variables, we can simplify \eqref{eq:sch3-s2} into  the equations \eqref{eq:sch3-s2X}-\eqref{eq:sch3-s2Y}.
\end{rem}

Besides, we still have the energy stability of this scheme as below.

\begin{thm}
The decoupled scheme \ref{Scheme:sch-3} is energy stable in the sense that
\begin{multline}
\hat{\cE}(\bu^{n+1}, p^{n+1},  \phi^{n+1}) - \hat{\cE}(\bu^n ,  p^n, \phi^n) \leq  \\
- \frac{\delta t}{2}  \Big[ \eta \| \nabla \frac{\bu_\star + \bu^n}{2}\|^2 +\eta  \| \nabla \frac{ \bu^{n+1} + \bu_{\star \star}}{2}\|^2  + 2 \|\sqrt{M(\overline{\phi}^{n+\frac{1}{2}})} \nabla \mu^{n+\frac{1}{2}} \|^2 \Big], 
\end{multline}
where the modified energy is defined as
\beq
\hat{\cE}(\bu, p, \phi) = E(\bu) + \tilde{E}(\nabla p ) + F(\phi), \\
\eeq
with the three terms specified as
\beq
E(\bu) = \int_\Omega \frac{\rho}{2} | \bu |^2 d\bx,    \quad
\tilde{E}(\nabla p) = \int_\Omega \frac{\delta t^2}{8\rho } \| \nabla p\|^2 d\bx, \quad 
F(\phi) =\gamma \int_\Omega  \frac{\varepsilon}{2} |\nabla \phi|^2 + \frac{1}{\varepsilon} f(\phi) d\bx.
\eeq
\end{thm}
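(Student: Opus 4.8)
The plan is to follow the pattern of the proofs of Theorems~\ref{thm:sch1} and~\ref{thm:sch2}: process the three Strang--Marchuk substeps of Scheme~\ref{Scheme:sch-3} one at a time, track the change of the augmented energy $\hat{\cE}=E+\tilde{E}+F$ across each, and then sum the three balances. Note that $F$ evolves only in Step~2 (since $\phi_\star=\phi^n$ and $\phi^{n+1}=\phi_{\star\star}$), the pressure energy $\tilde{E}$ evolves only in the Navier--Stokes substeps (Steps~1 and~3), and the kinetic energy $E$ in all three, so the three energies telescope along $\bu^n\to\bu_\star\to\bu_{\star\star}\to\bu^{n+1}$, $\phi^n=\phi_\star\to\phi_{\star\star}=\phi^{n+1}$, and $p^n\to p_\star\to p^{n+1}$.

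\emph{Step~1 (and, verbatim, Step~3).} I would recombine the prediction \eqref{eq:sch3-s1X}, the pressure--Poisson problem \eqref{eq:sch3-s1Y} and the velocity correction \eqref{eq:sch3-s1Z} (equivalently \eqref{eq:sch3-s1-New}) into a single momentum relation for $\bu_\star$, then take its inner product with $\frac{\delta t}{2}\cdot\frac{\bu_\star+\bu^n}{2}$: the convective term vanishes by skew symmetry, $b\big(\overline{\bu}^{n+\frac{1}{4}},\frac{\bu_\star+\bu^n}{2},\frac{\bu_\star+\bu^n}{2}\big)=0$, and the pressure term vanishes by discrete incompressibility ($\bu_\star$ and $\bu^n$ are divergence-free with vanishing normal trace), leaving only the viscous part. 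The essential structural input is that \eqref{eq:sch3-s1Y}--\eqref{eq:sch3-s1Z} realize the Helmholtz--Hodge projection of $\tilde{\bu}_\star$, so that $\tilde{\bu}_\star-\bu_\star$ is a pure gradient proportional to $\nabla(p_\star-p^n)$. Combining the Pythagorean identity $E(\tilde{\bu}_\star)=E(\bu_\star)+\frac{\rho}{2}\|\tilde{\bu}_\star-\bu_\star\|^2$ with the inner product of the prediction equation against the intermediate velocity (which supplies the $-\nabla p^n$ forcing and, through the Poisson equation and integration by parts, the cross term $(\nabla p^n,\nabla(p_\star-p^n))$), a polarization of $\|\nabla(p_\star-p^n)\|^2$ then reassembles exactly the telescoping difference $\tilde{E}(\nabla p_\star)-\tilde{E}(\nabla p^n)$; the leftover viscous--pressure cross terms vanish after integration by parts since $\nabla\cdot\frac{\bu_\star+\bu^n}{2}=0$, with the boundary contributions killed by \eqref{eq:sch3-s1-2} and the Neumann condition \eqref{eq:sch3-s1-4}. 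This should give
\[
E(\bu_\star)+\tilde{E}(\nabla p_\star)-E(\bu^n)-\tilde{E}(\nabla p^n)=-\frac{\delta t}{2}\,\eta\Big\|\nabla\frac{\bu_\star+\bu^n}{2}\Big\|^2,
\]
and the analogous identity for Step~3 (from \eqref{eq:sch3-s3X}--\eqref{eq:sch3-s3Z} and \eqref{eq:sch3-s3-New}) with $(\bu^n,p^n,\bu_\star,p_\star)$ replaced by $(\bu_{\star\star},p_\star,\bu^{n+1},p^{n+1})$.

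\emph{Step~2.} This is essentially the computation already done in the proof of Theorem~\ref{thm:sch2}: \eqref{eq:sch3-s2X}--\eqref{eq:sch3-s2Y} are equivalent to the coupled system \eqref{eq:sch3-s2}, so I would test \eqref{eq:sch3-s2-1} with $\delta t\,\frac{\bu_{\star\star}+\bu_\star}{2}$, \eqref{eq:sch3-s2-2} with $\delta t(\phi_{\star\star}-\phi_\star)$, \eqref{eq:sch3-s2-3} with $\delta t\,p$, and \eqref{eq:sch3-s2-4} with $\delta t\,\mu^{n+\frac{1}{2}}$, sum, and use \eqref{eq:sch3-s2-5} together with \eqref{eq:bulk_potential_requirement}. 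The internal multiplier $p$ drops out because $\nabla\cdot\frac{\bu_{\star\star}+\bu_\star}{2}=0$, the two mixed terms $\big(\nabla\cdot(\frac{\bu_{\star\star}+\bu_\star}{2}\overline{\phi}^{n+\frac{1}{2}}),\mu^{n+\frac{1}{2}}\big)$ and $\big(\overline{\phi}^{n+\frac{1}{2}}\nabla\mu^{n+\frac{1}{2}},\frac{\bu_{\star\star}+\bu_\star}{2}\big)$ cancel, $\tilde{E}$ is unchanged, and what survives is
\[
\hat{\cE}(\bu_{\star\star},p_\star,\phi_{\star\star})-\hat{\cE}(\bu_\star,p_\star,\phi_\star)=-\delta t\,\big\|\sqrt{M(\overline{\phi}^{n+\frac{1}{2}})}\,\nabla\mu^{n+\frac{1}{2}}\big\|^2.
\]
Adding the three balances and recalling $\phi_\star=\phi^n$, $\phi^{n+1}=\phi_{\star\star}$ then yields the asserted inequality, with ``$\le$'' in place of ``$=$'' whenever only the weak form of \eqref{eq:bulk_potential_requirement} is available.

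\emph{Main obstacle.} Steps~1 and~3 are the delicate ones. Unlike Schemes~\ref{Scheme:sch-1}--\ref{Scheme:sch-2}, where the velocity--pressure block is solved monolithically and the pressure acts as a clean Lagrange multiplier, here the incremental pressure-correction splitting introduces the pressure-increment gradient $\nabla(p_\star-p^n)$, and one must verify that, once the prediction, Poisson, and correction steps are recombined, these terms assemble \emph{exactly} into the telescoping of the augmented energy with no residual term. This is precisely why $\tilde{E}(\nabla p)=\int_\Omega\frac{\delta t^2}{8\rho}\|\nabla p\|^2\,d\bx$ is introduced: it absorbs the splitting error $\frac{\rho}{2}\|\tilde{\bu}_\star-\bu_\star\|^2$, which is a multiple of $\|\nabla(p_\star-p^n)\|^2$. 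Carefully checking the Helmholtz--Hodge orthogonality, the Poisson identities, and the vanishing of the viscous--pressure boundary cross terms is the bulk of the argument; the rest reduces to the bookkeeping already carried out for Theorems~\ref{thm:sch1}--\ref{thm:sch2}.
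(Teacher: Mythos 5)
Your proposal is correct and follows essentially the same route as the paper: a substep-by-substep energy balance in which Step~2 is handled exactly as in Theorem~\ref{thm:sch2}, and Steps~1 and~3 use the projection orthogonality (Pythagorean identity), the polarization identity from testing the correction equation \eqref{eq:sch3-s1-New} against $\nabla p^n$, and the relation $\frac{\delta t^2}{8\rho}\|\nabla(p_\star-p^n)\|^2=\frac{\rho}{2}\|\bu_\star-\tilde{\bu}_\star\|^2$, so that $\tilde{E}(\nabla p)$ absorbs the splitting error and the three balances telescope. The only difference is cosmetic bookkeeping (the paper tests the prediction equation against $\frac{\delta t}{2}\frac{\tilde{\bu}_\star+\bu^n}{2}$, so its dissipation appears in the intermediate velocity average rather than $\frac{\bu_\star+\bu^n}{2}$), which does not change the argument.
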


\begin{proof}
Since the proof of this theorem is a little different from proofs of the other two theorems in previous sections, we will illustrate the details. 
First of all, we take the inner product of \eqref{eq:sch3-s1-1} with $\frac{\delta t}{2} \frac{ \tilde{\bu}_\star + \bu^n}{2}$, we obtain
\beq \label{eq:tmp-st1}
\frac{\rho}{2}( \| \tilde{\bu}_\star \|^2 - \| \bu^n\|^2) =- \frac{\delta t}{2} (\nabla p^n, \frac{ \tilde{\bu}_\star + \bu^n}{2}) - \frac{\delta t}{2} \eta \| \nabla \frac{ \tilde{\bu}_\star + \bu^n}{2} \|^2.
\eeq 

If we take the inner product of \eqref{eq:sch3-s1-New} with $\delta t \bu_\star$, we have
\beq \label{eq:tmp-st2}
\frac{\rho}{2} (\| \bu_\star \|^2 - \| \tilde{\bu}_\star\|^2 + \| \bu_\star - \tilde{\bu}_\star\|^2 ) = 0.
\eeq 

We can rewrite \eqref{eq:sch3-s1-New} as 
\beq
\frac{\rho}{2\delta t} ( \bu_\star + \bu^n - 2 \frac{ \bu_\star + \tilde{\bu}_\star}{2}) + \frac{1}{2} (\nabla p_\star - \nabla p^n ) = 0.
\eeq 
and test it with $\frac{\delta t^2 }{4}\nabla p^n$. It yields
\beq \label{eq:tmp-st3}
- \frac{\delta t}{2} (\frac{ \tilde{\bu}_\star + \bu^n}{2}, \nabla p^n)  + \frac{\delta t^2}{8\rho } (\| \nabla p_\star \|^2 - \| \nabla p^n \|^2 - \| \nabla (p_\star - p^n) \|^2 ) = 0.
\eeq 

Taking the inner product of \eqref{eq:sch3-s1-New} with itself, we can have
\beq \label{eq:tmp-st4}
\frac{\delta t^2}{8\rho } \| \nabla (p_\star - p^n) \|^2 - \frac{\rho }{2} \| \bu_\star - \tilde{\bu}_\star\|^2 = 0.
\eeq 

Adding the equations \eqref{eq:tmp-st1},\eqref{eq:tmp-st2}, \eqref{eq:tmp-st3} and \eqref{eq:tmp-st4} together will give us
\beq \label{eq:sch3-E1}
(\frac{\rho}{2} \| \bu_\star\|^2 + \frac{\delta t^2}{\rho } \| \nabla p_\star\|^2 ) - (\frac{\rho}{2} \| \bu^n \|^2 + \frac{\delta t^2}{8\rho } \| \nabla p^n \|^2) = - \frac{\delta t}{2} \eta \| \nabla \frac{ \tilde{\bu}_\star + \bu^n}{2} \|^2.
\eeq 

Take inner product of \eqref{eq:sch3-s2-1} with $\delta t \frac{ \bu_{\star \star} + \bu_\star}{2}$, we have
\beq \label{eq:sch3-step2-A}
\frac{\rho}{2}( \| \bu_{\star \star} \| ^2 - \| \bu_\star\|^2) = -\delta t  (\frac{ \bu_{\star \star} + \bu_\star}{2} \overline{\phi}^{n+\frac{1}{2}} \nabla \mu^{n+\frac{1}{2}}).
\eeq 

If we take inner product of \eqref{eq:sch3-s2-3} with $\delta t \mu^{n+\frac{1}{2}}$,  and \eqref{eq:sch3-s2-4} with $\delta t (\phi_{\star \star} - \phi_\star)$, we have
\beq \label{eq:sch3-step2-B}
F(\phi_{\star \star}) - F(\phi_\star) = -\delta t (\nabla \cdot ( \frac{\bu_{\star \star} + \bu_{\star}}{2} \overline{\phi}^{n+\frac{1}{2}}), \mu^{n+\frac{1}{2}}) - \delta t  \|\sqrt{M(\overline{\phi}^{n+\frac{1}{2}})} \nabla \mu^{n+\frac{1}{2}} \|^2.
\eeq 
Adding the equations \eqref{eq:sch3-step2-A} and \eqref{eq:sch3-step2-B} above, we have
\beq  \label{eq:sch3-E2}
\frac{\rho}{2}( \| \bu_{\star \star} \| ^2 - \| \bu_\star\|^2)  + F(\phi_{\star \star}) - F(\phi_\star)  = - \delta t  \| \sqrt{M(\overline{\phi}^{n+\frac{1}{2}})} \nabla \mu^{n+\frac{1}{2}} \|^2.
\eeq 

Similarly, from \eqref{eq:sch3-s3-1}-\eqref{eq:sch3-s3-4}, we will obtain
\beq \label{eq:sch3-E3}
(\frac{\rho}{2} \| \bu^{n+1}]\|^2 + \frac{\delta t^2}{8} \| \nabla p^{n+1}]\|^2 ) - (\frac{\rho}{2} \| \bu_{\star \star} \|^2 + \frac{\delta t^2}{8} \| \nabla p_\star  \|^2) = - \frac{\delta t}{2} \eta \| \nabla \frac{ \tilde{\bu}^{n+1} + \bu_{\star \star}}{2} \|^2.
\eeq 

Adding the equations \eqref{eq:sch3-E1}, \eqref{eq:sch3-E2} and \eqref{eq:sch3-E3}, it lead to
\begin{multline}
(\frac{\rho}{2} \| \bu^{n+1}\|^2 + \frac{\delta t^2}{8\rho } \| \nabla p^{n+1}]\|^2  + F(\phi^{n+1})) - (\frac{\rho}{2} \| \bu^n \|^2 + \frac{\delta t^2}{8\rho} \| \nabla p^n \|^2 +F(\phi^n))  \\
= - \frac{\delta t}{2}  \Big[  \eta \| \nabla \frac{ \tilde{\bu}^{n+1} + \bu_{\star \star}}{2} \|^2 + \eta \| \nabla \frac{ \tilde{\bu}_{\star \star} + \bu^n}{2} \|^2  + \| \sqrt{M(\overline{\phi}^{n+\frac{1}{2}})} \nabla \mu^{n+\frac{1}{2}} \|^2 \Big].
\end{multline}
\end{proof}

\begin{rem}
We note that the modified free energy $\hat{\cE}$ is a second-order perturbation of the original energy. This type of perturbation is preferred than the modified free energy with auxiliary variables, since its errors are explicitly written as $\tilde{E}(\nabla p)$, which is $O(\delta t^2)$.
\end{rem}

\section{Numerical Results} \label{sect:results}

Note that the central finite difference on staggered grids for the spatial discretization and the operator splitting finite difference method in uniform time meshes for the temporal discretization are independent. Thus, the full discrete schemes for the CHNS system can be easily obtained by literately combine the spatial discretization in Section \ref{sect:space} and temporal discretization in Section \ref{sect:time}. Here we won't elaborate on the details for simplicity.

Once the full discrete schemes are obtained, they are implemented.  In this section, we conduct several numerical experiments with the proposed schemes. In particular, the time mesh refinements are presented to demonstrate the second order temporal accuracy of the proposed schemes. And some benchmark examples are calculated to illustrate the effectiveness of the proposed decoupled schemes.

\subsection{Time step refinement tests}
First of all, we verify that the proposed numerical algorithms are second-order in time indeed. Consider a square domain $\Omega =[0, 1]^2$. We fix the parameters
$\rho = 1$,  $\eta = 1.0$, $\lambda =0.01$, $\varepsilon = 0.01$, $\gamma=0.01$. The initial profiles are chosen as
$$
\bu(t=0) = 0, \quad \phi(t=0) = \tanh \frac{ R - \sqrt{ (x-0.5L_x)^2 +2* (y-0.5L_y)^2}}{2\varepsilon}.
$$
We use the meshes $N_x=N_y=128$, and numerical solutions with various time steps are calculated. Since the true solution is unknown, we follow our previous procedure by calculating the errors at $T=0.2$ between two numerical solutions with adjacent time step sizes. It is known the order of errors approximates the order of the temporal accuracy for the numerical schemes. The three proposed numerical schemes \ref{Scheme:sch-1}, \ref{Scheme:sch-2}, \ref{Scheme:sch-3} are tested with the numerical results summarized in Figure \ref{fig:error-1}, \ref{fig:error-2} and \ref{fig:error-3}, respectively. It can be easily seen that all the three proposed schemes reach second-order accuracy in time when the time step is small.

\begin{figure}[H]
\center 
\includegraphics[width=0.9\textwidth]{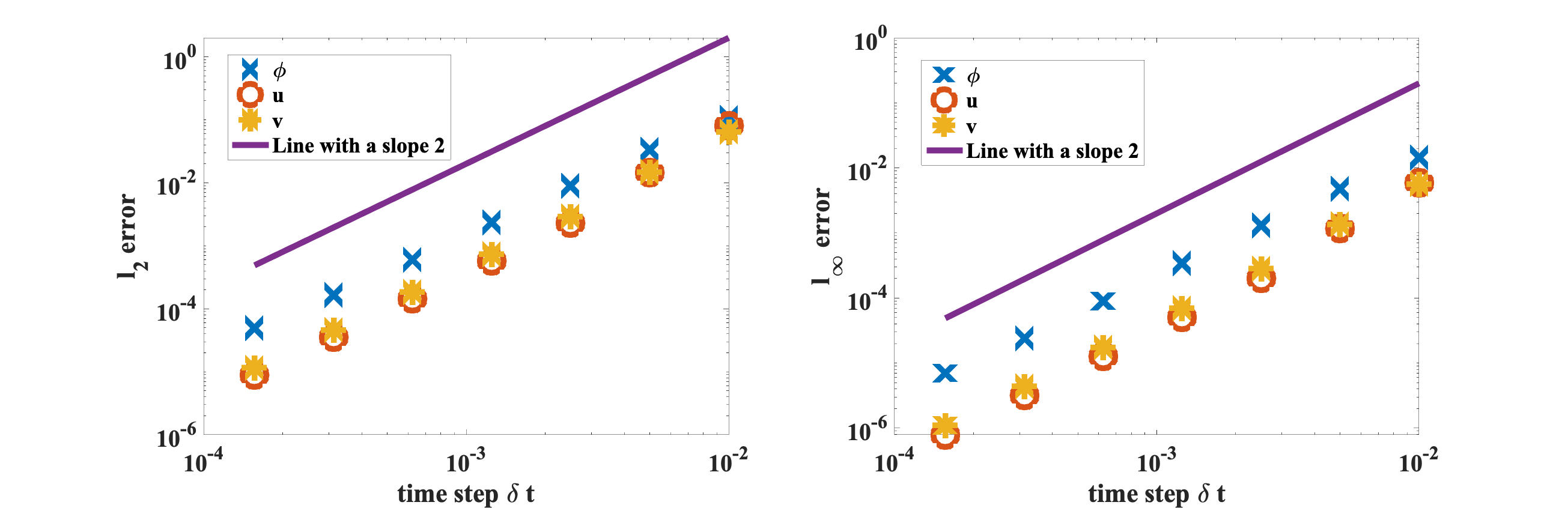}
\caption{Temporal mesh refinement test for scheme \ref{Scheme:sch-1}. Here both the $L^2$ errors and $L^\infty$ errors using different time step sizes are shown. We observe that the scheme \ref{Scheme:sch-1} provides 2nd-order temporal accuracy.}
\label{fig:error-1}
\end{figure}

\begin{figure}[H]
\center
\includegraphics[width=0.9\textwidth]{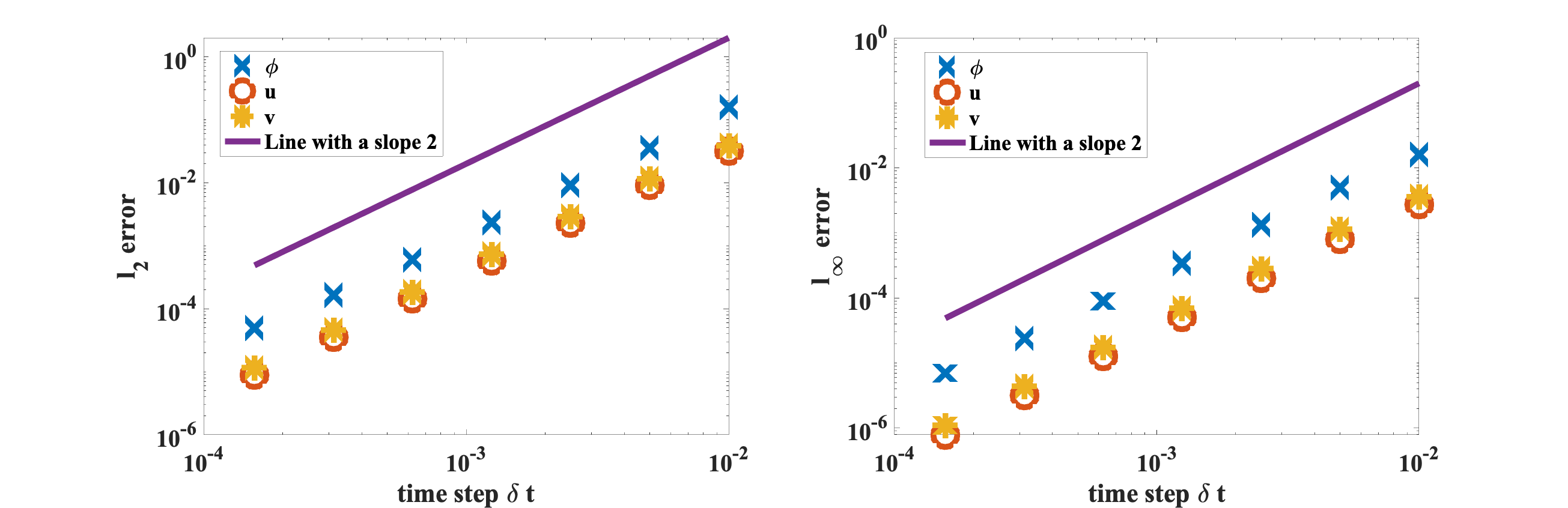}
\caption{Temporal mesh refinement test for scheme \ref{Scheme:sch-2}. Here both the $L^2$ errors and $L^\infty$ errors using different time step sizes are shown. We observe that the scheme \ref{Scheme:sch-2} provides 2nd-order temporal accuracy.}
\label{fig:error-2}
\end{figure}

\begin{figure}[H]
\center
\includegraphics[width=0.9\textwidth]{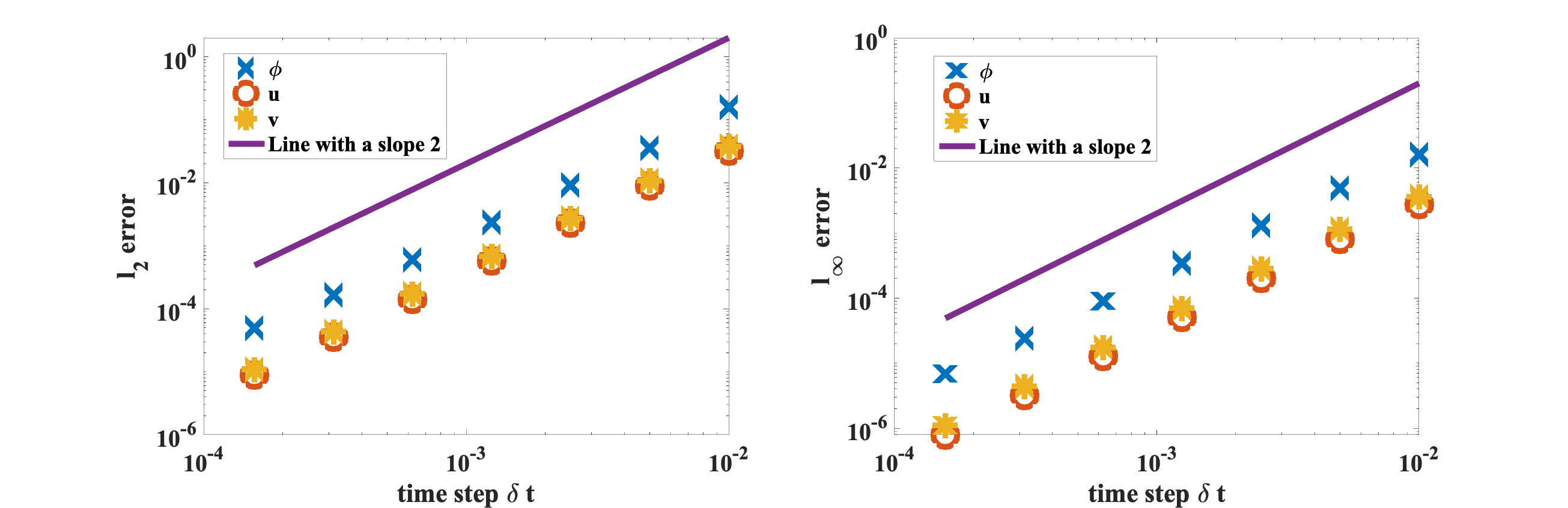}
\caption{Temporal mesh refinement test for scheme \ref{Scheme:sch-3}. Here both the $L^2$ errors and $L^\infty$ errors using different time step sizes are shown. We observe that the scheme \ref{Scheme:sch-3} provides 2nd-order temporal accuracy.}
\label{fig:error-3}
\end{figure}

In addition, it seems that three schemes provide similar numerical results and accuracy, given they are using the same time steps. This is not surprise since three schemes only differ slightly, and the major numerical errors resulted from these three schemes are due to the operator splitting. Acknowledging this fact,  we use scheme \ref{Scheme:sch-3} for all numerical simulations in the rest of this paper.

\subsection{Bubbles merging driven by surface tension}
Next, we conduct several numerical simulations on the merging of the two bubbles that is driven by surface tension. We follow the similar set up as in \cite{Guo&Lin&LowngrubJCP2014}. Consider the domain $\Omega=[0, L_x] \times [0, L_y]$, with $L_x=L_y=1$.  The initial conditions are provided as $\bu(x, y, t=0) = \mathbf{0}$, and
$$
\phi(x, y, t=0) = 1- \tanh \frac{ -r + \sqrt{ (x-x_a)^2 + (y-y_a)^2}}{2\varepsilon}  - \tanh \frac{ -r + \sqrt{ (x-x_b)^2 + (y-y_b)^2}}{2\varepsilon}.
$$
with $x_a = 0.5L_x - \frac{r}{\sqrt{2}}$, $y_a = 0.5L_y + \frac{r}{\sqrt{2}}$,  $x_b = 0.5L_x + \frac{r}{\sqrt{2}}$, $y_b = 0.5L_y - \frac{r}{\sqrt{2}}$, $r=0.15$. The parameters are chosen as $\rho=1$, $\eta = 1$, $\lambda = 0.01$, $\gamma=0.01$, $\varepsilon=0.01$, and we use various viscosity $\eta$, and uniorm meshes $N_x=N_y=128$.

The numerical results with viscosity $\eta = 0.01$ are shown in Figure \ref{fig:ETA2}, where the profiles of the phase variable $\phi$ at various times are shown. It can be observed that the two drops merge into a single drop.

\begin{figure}[H]
\center
\subfigure[$\phi$ at $t=0,0.2,0.4,0.8, 1.0$]{
\includegraphics[width=0.19\textwidth]{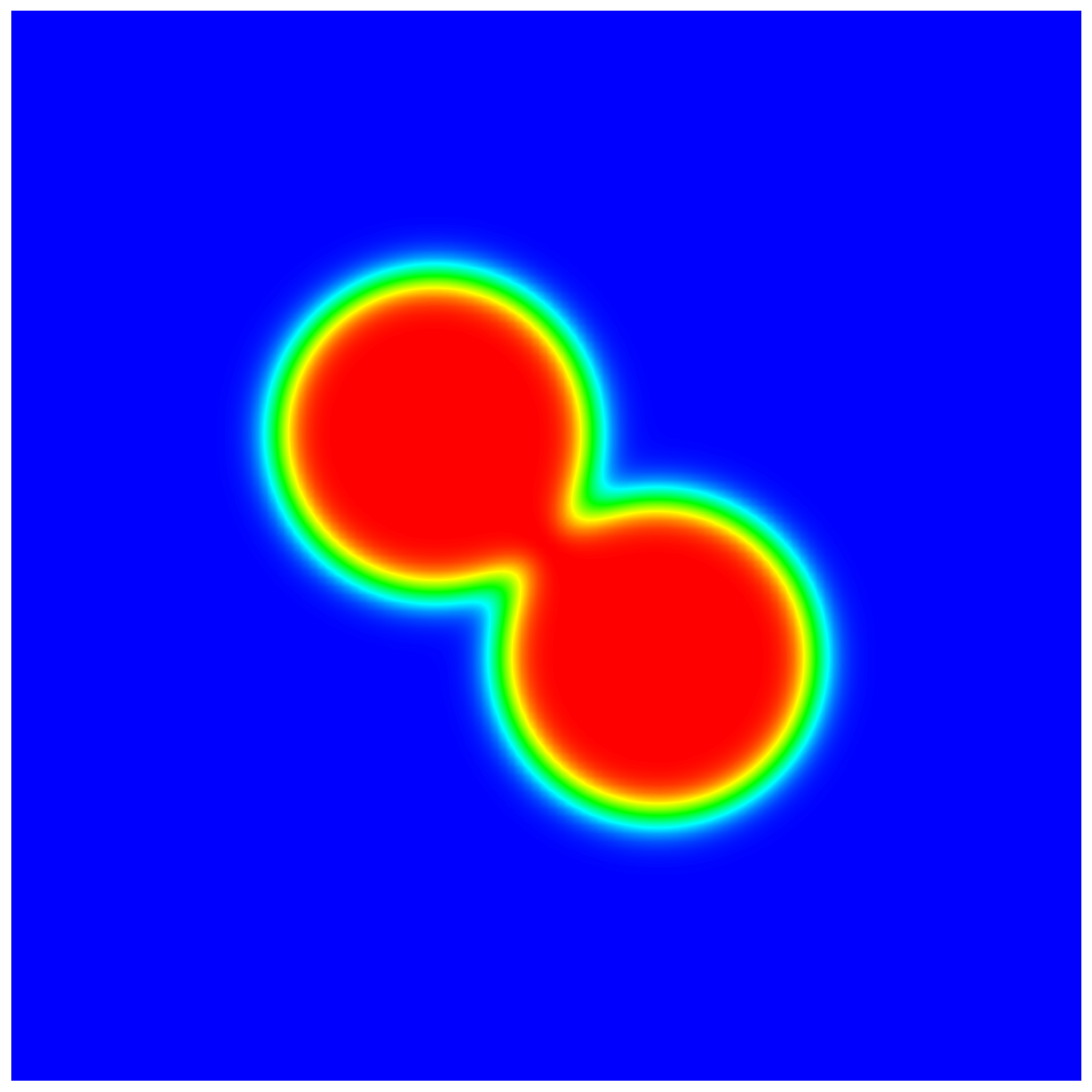}
\includegraphics[width=0.19\textwidth]{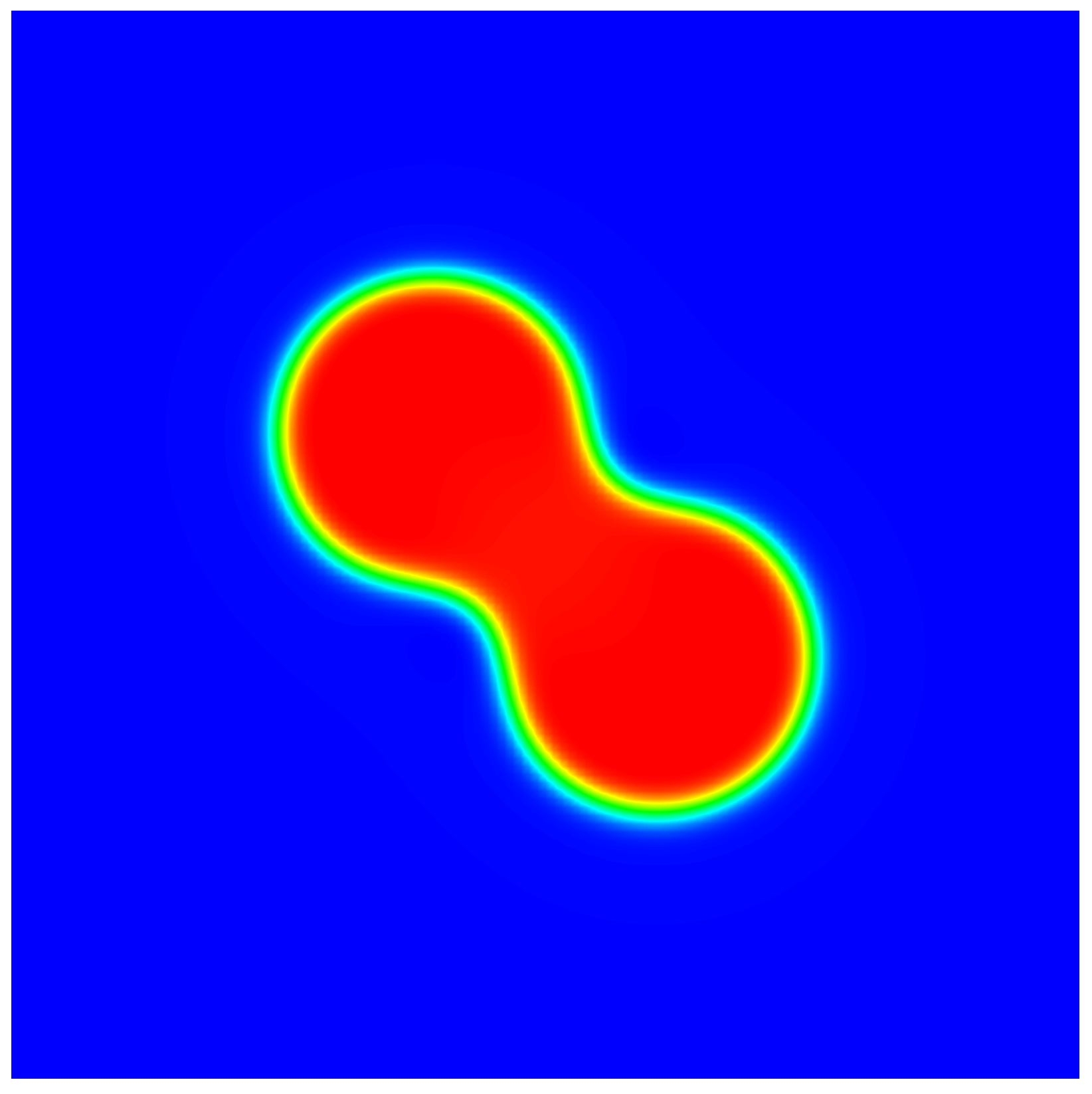}
\includegraphics[width=0.19\textwidth]{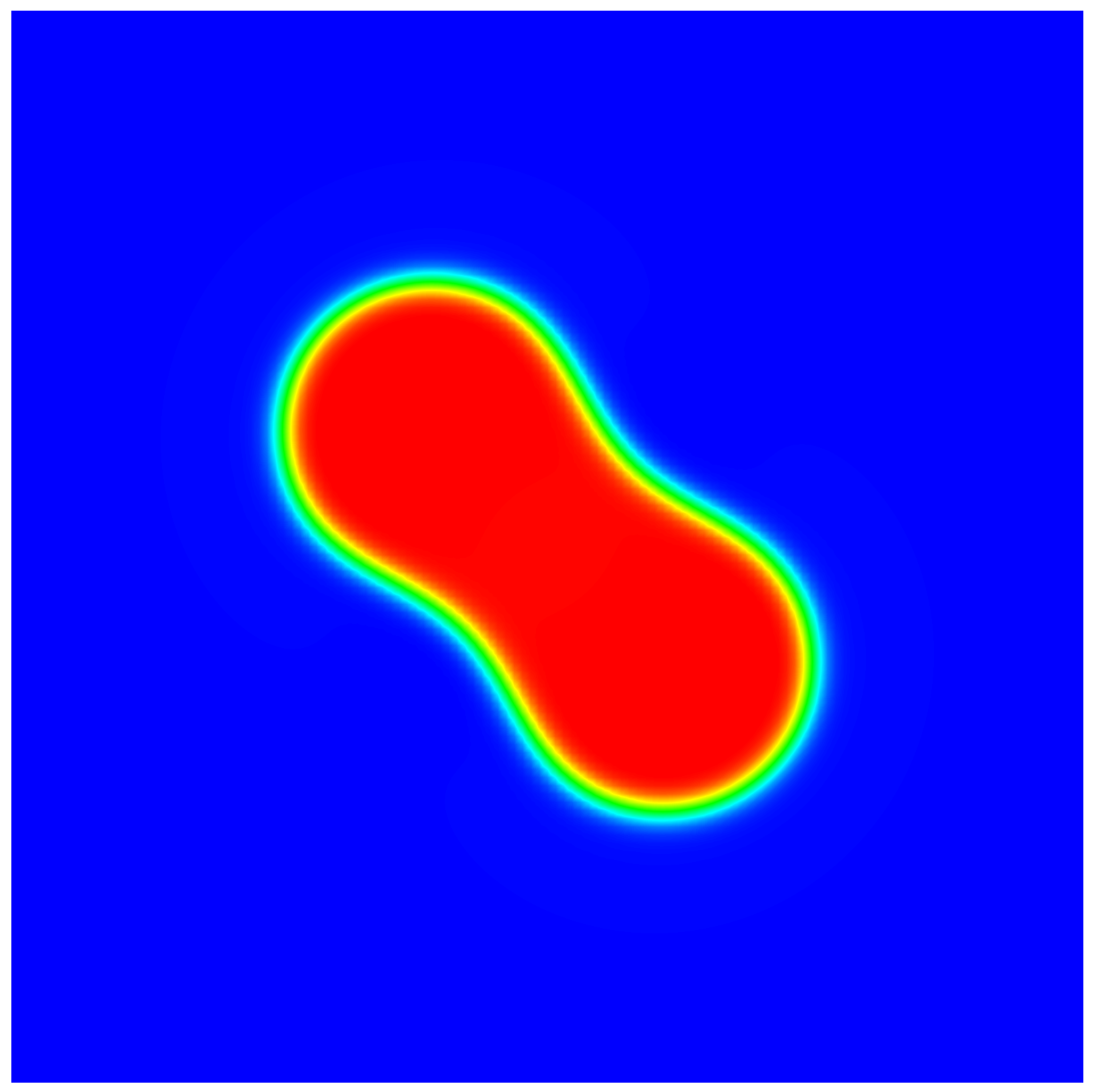}
\includegraphics[width=0.19\textwidth]{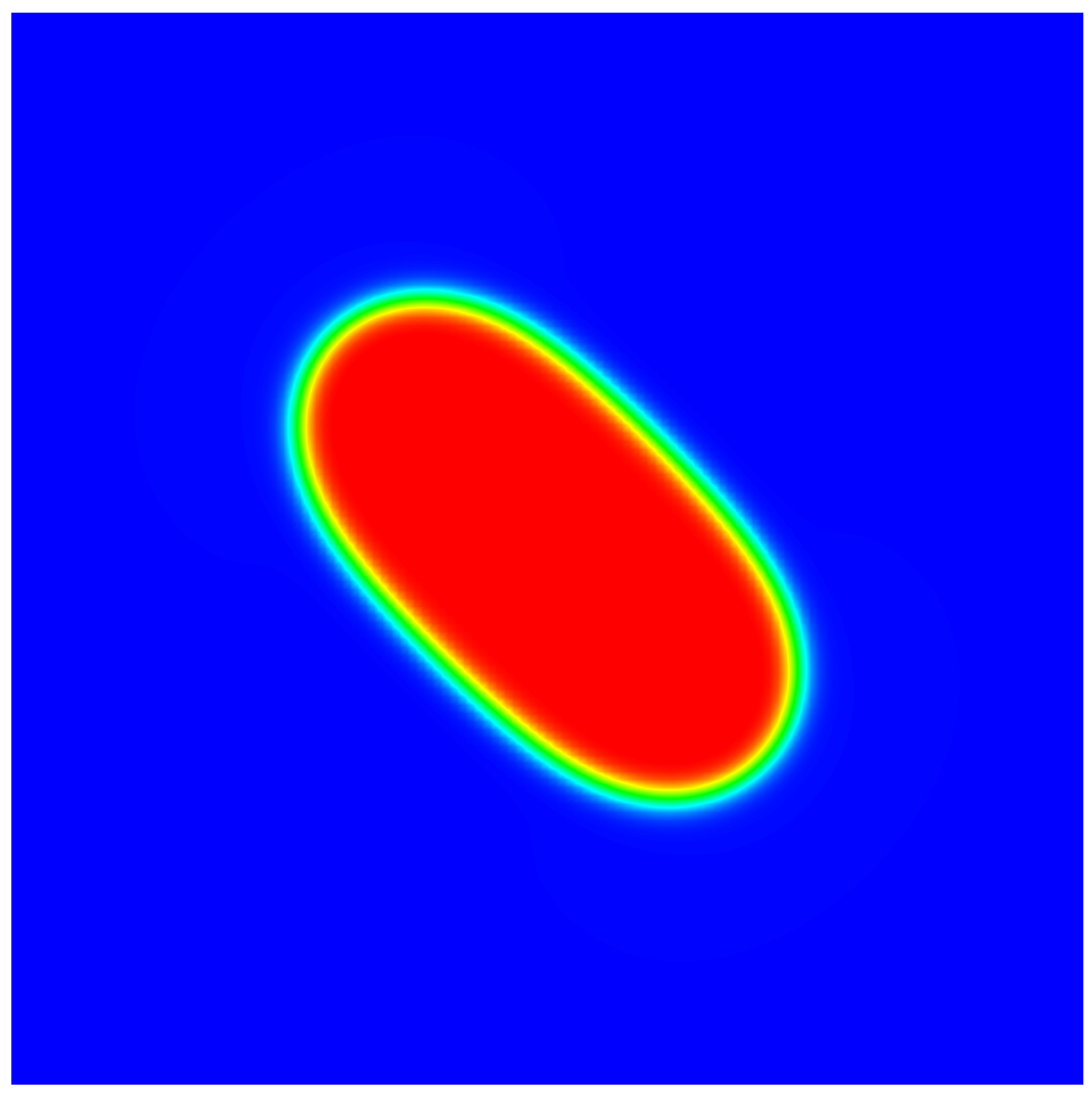}
\includegraphics[width=0.19\textwidth]{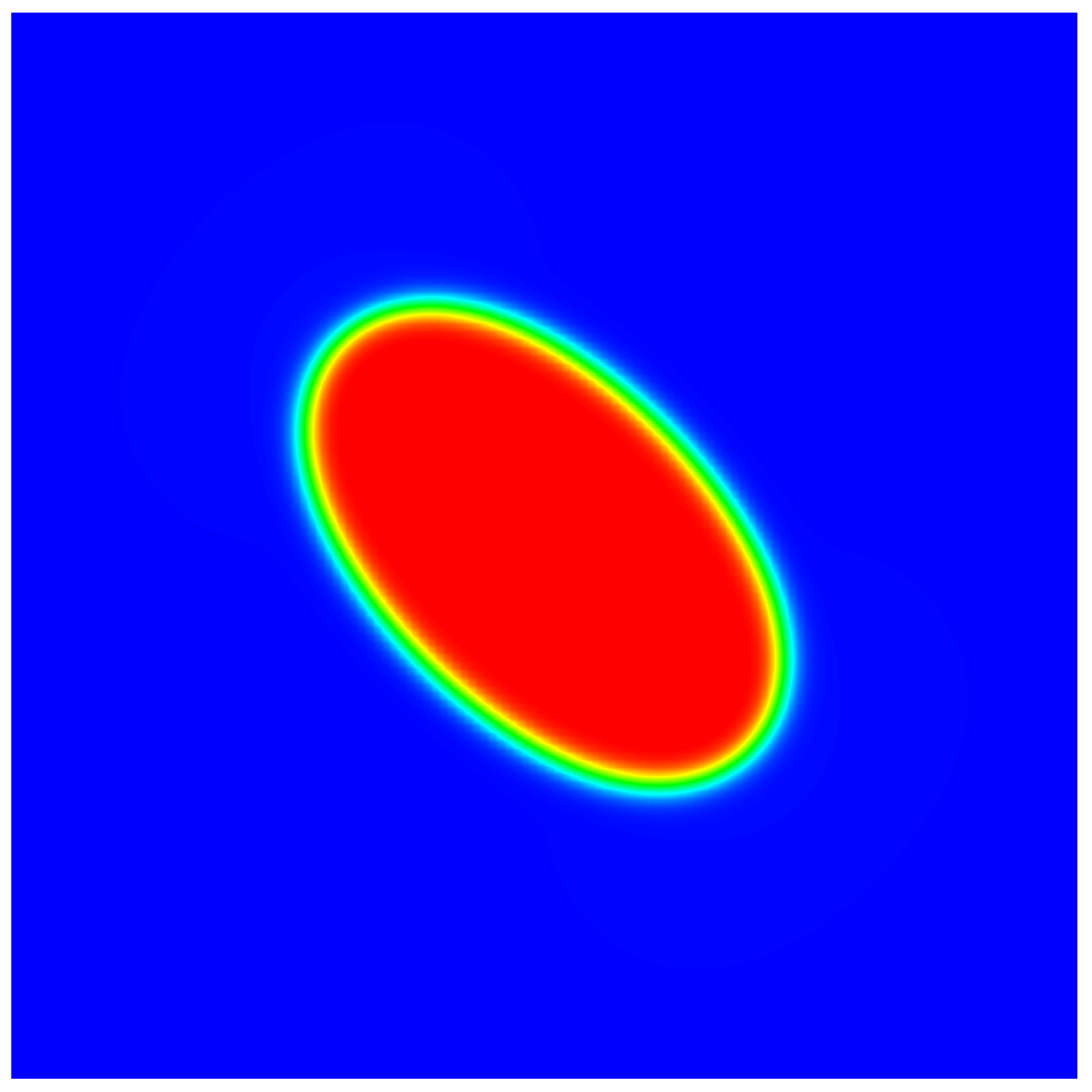}}

\subfigure[$\phi$ at $t=1.2, 1.5, 2, 3, 3.2$]{
\includegraphics[width=0.19\textwidth]{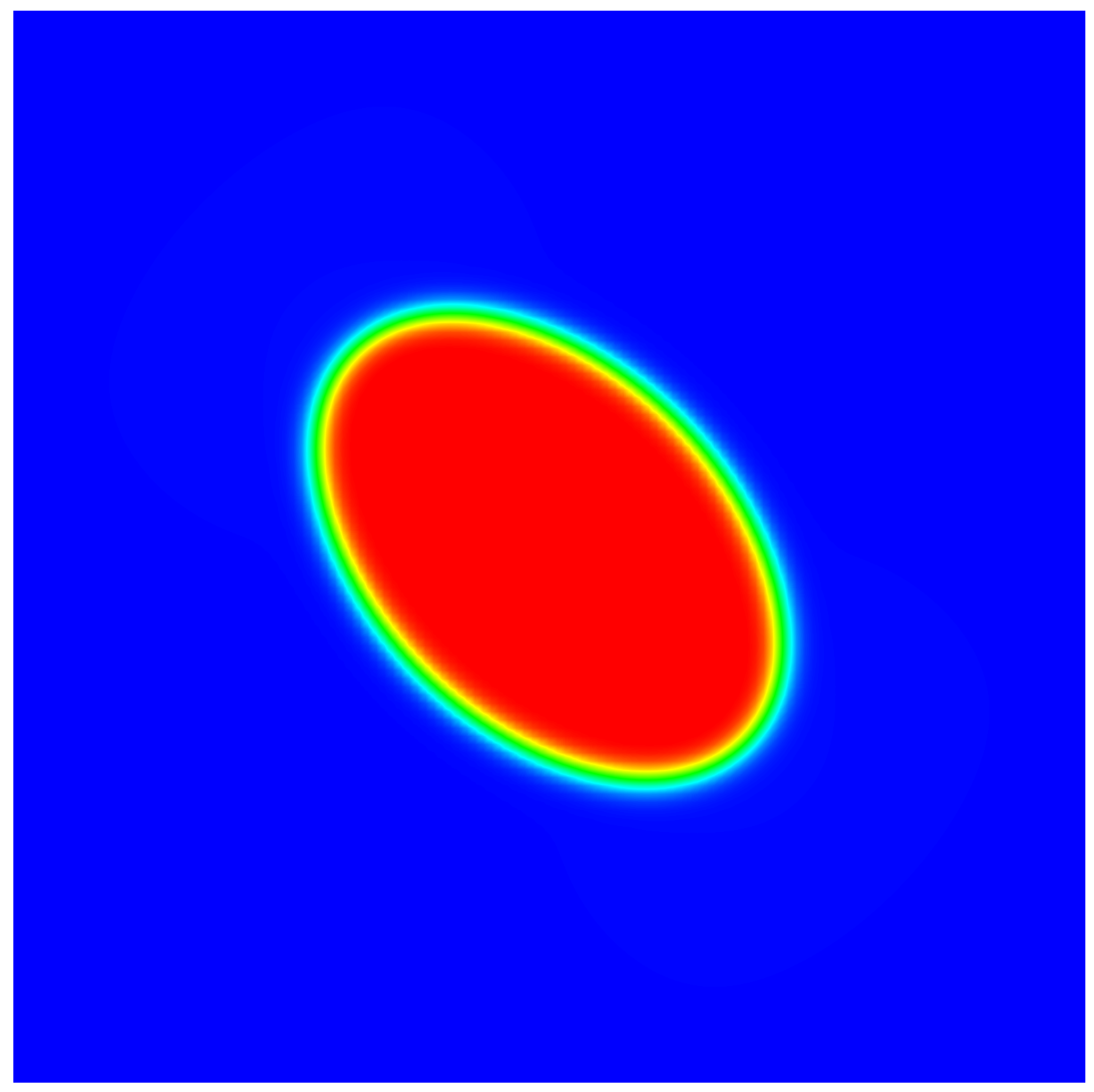}
\includegraphics[width=0.19\textwidth]{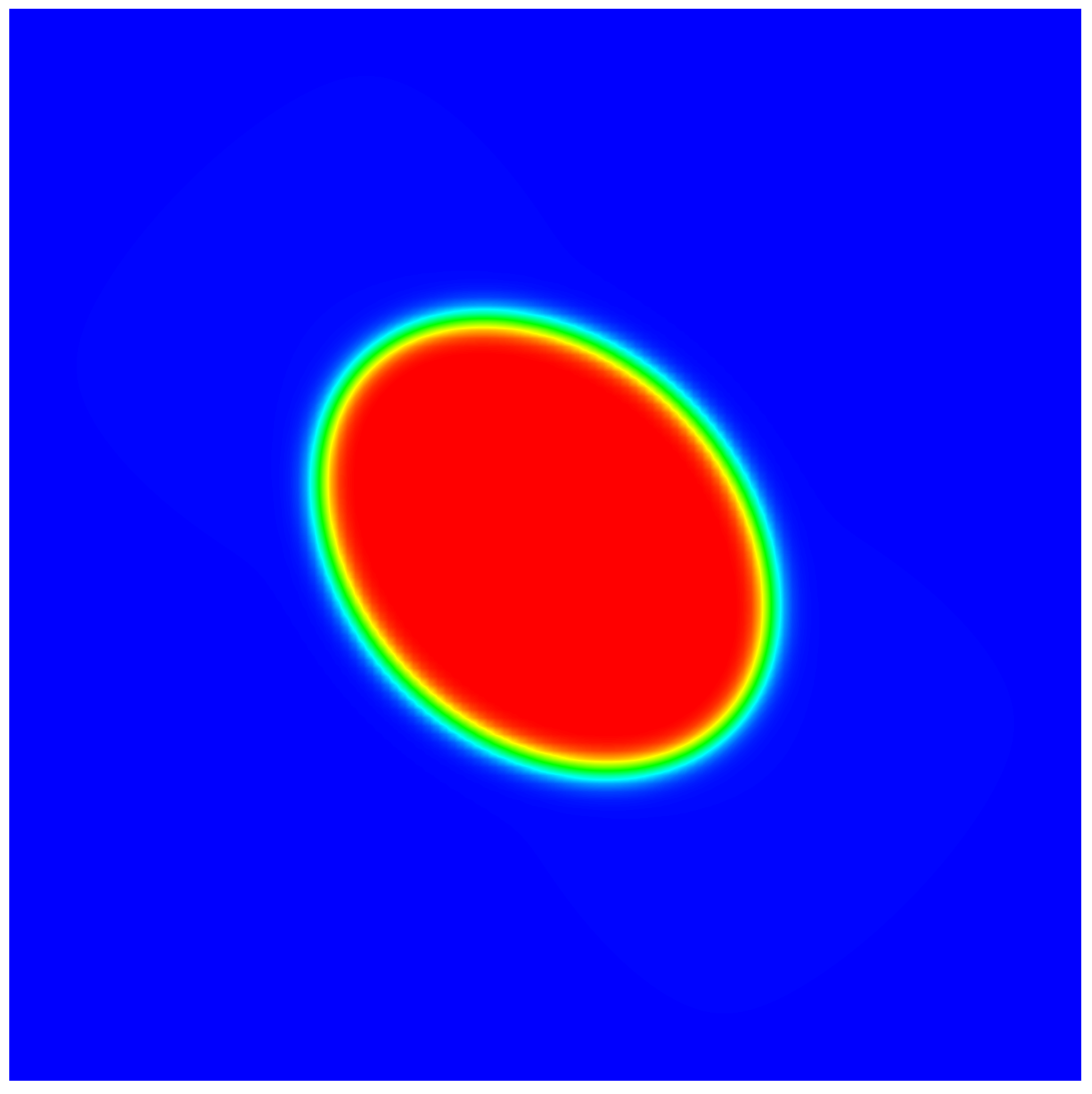}
\includegraphics[width=0.19\textwidth]{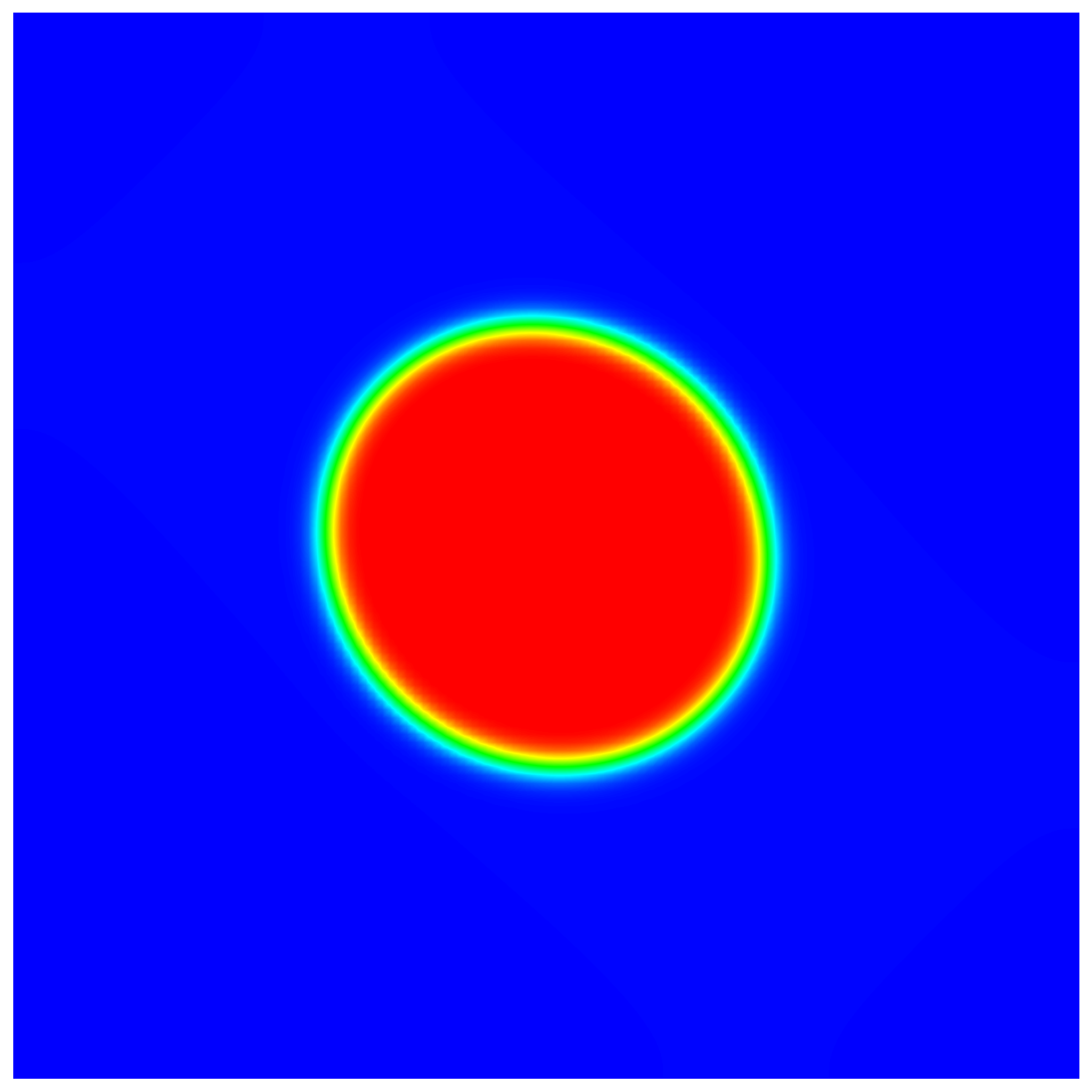}
\includegraphics[width=0.19\textwidth]{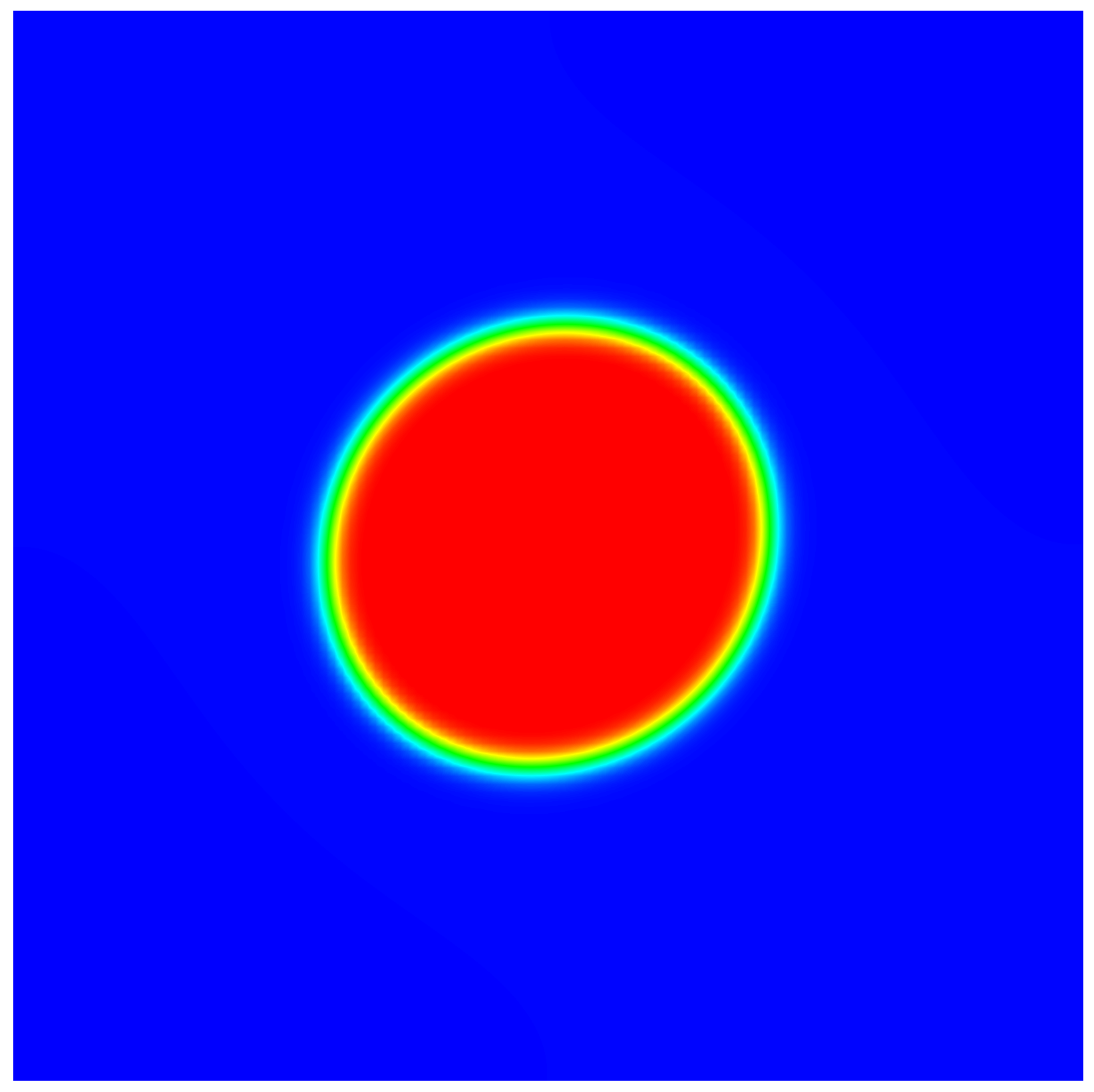}
\includegraphics[width=0.19\textwidth]{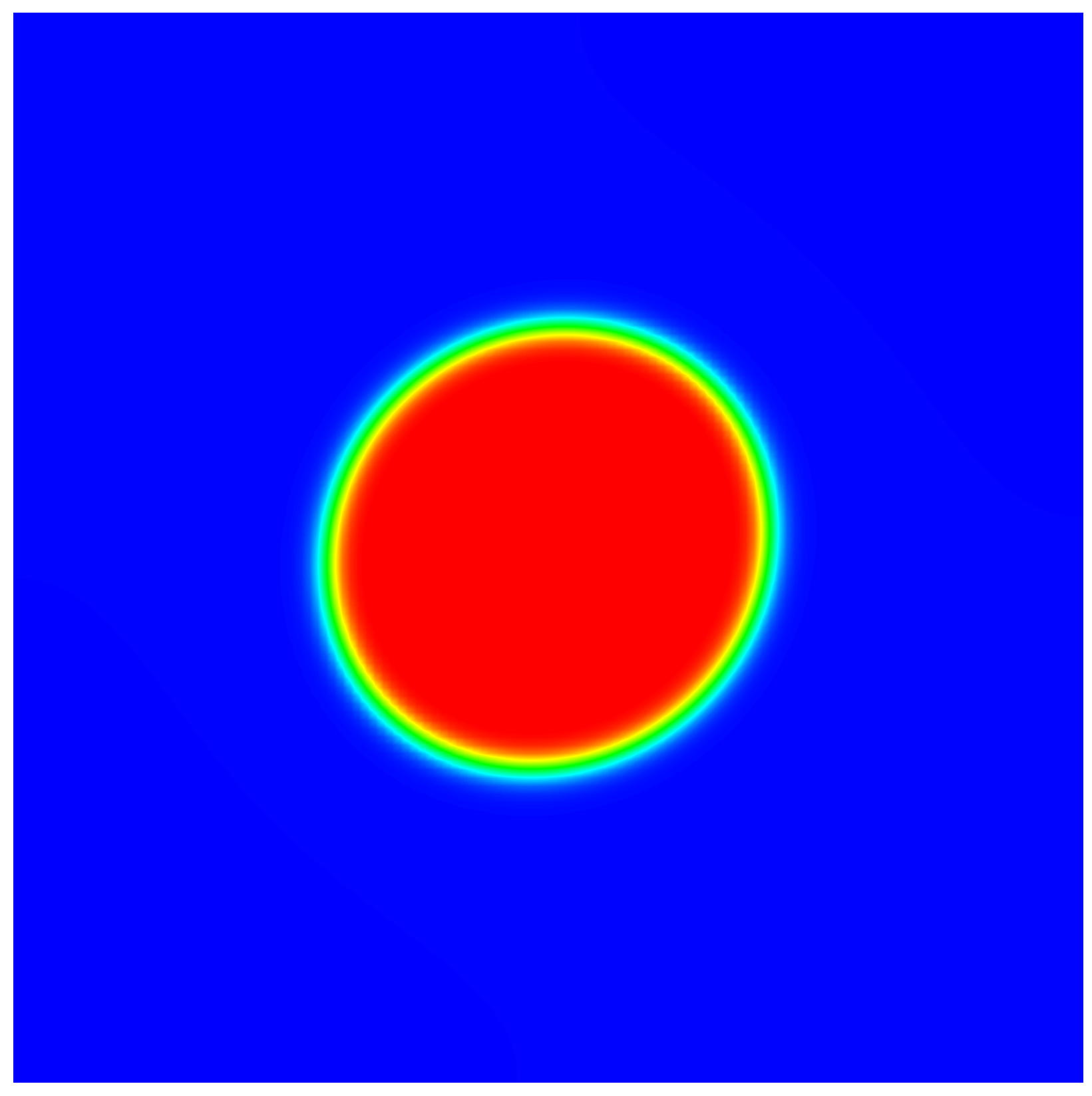}}

\subfigure[$\phi$ at $t=3.4, 4.6, 4.8, 5, 10$]{
\includegraphics[width=0.19\textwidth]{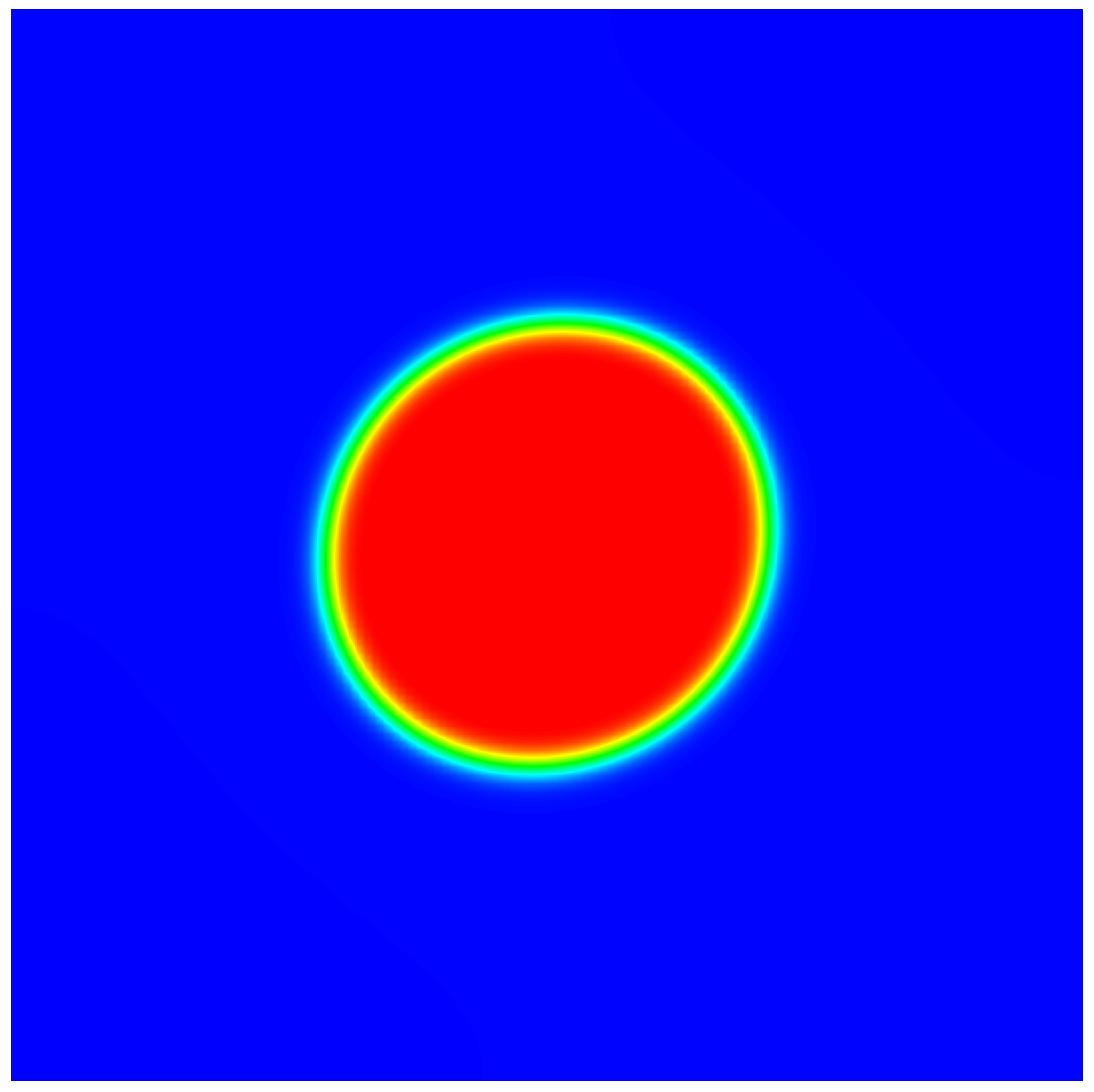}
\includegraphics[width=0.19\textwidth]{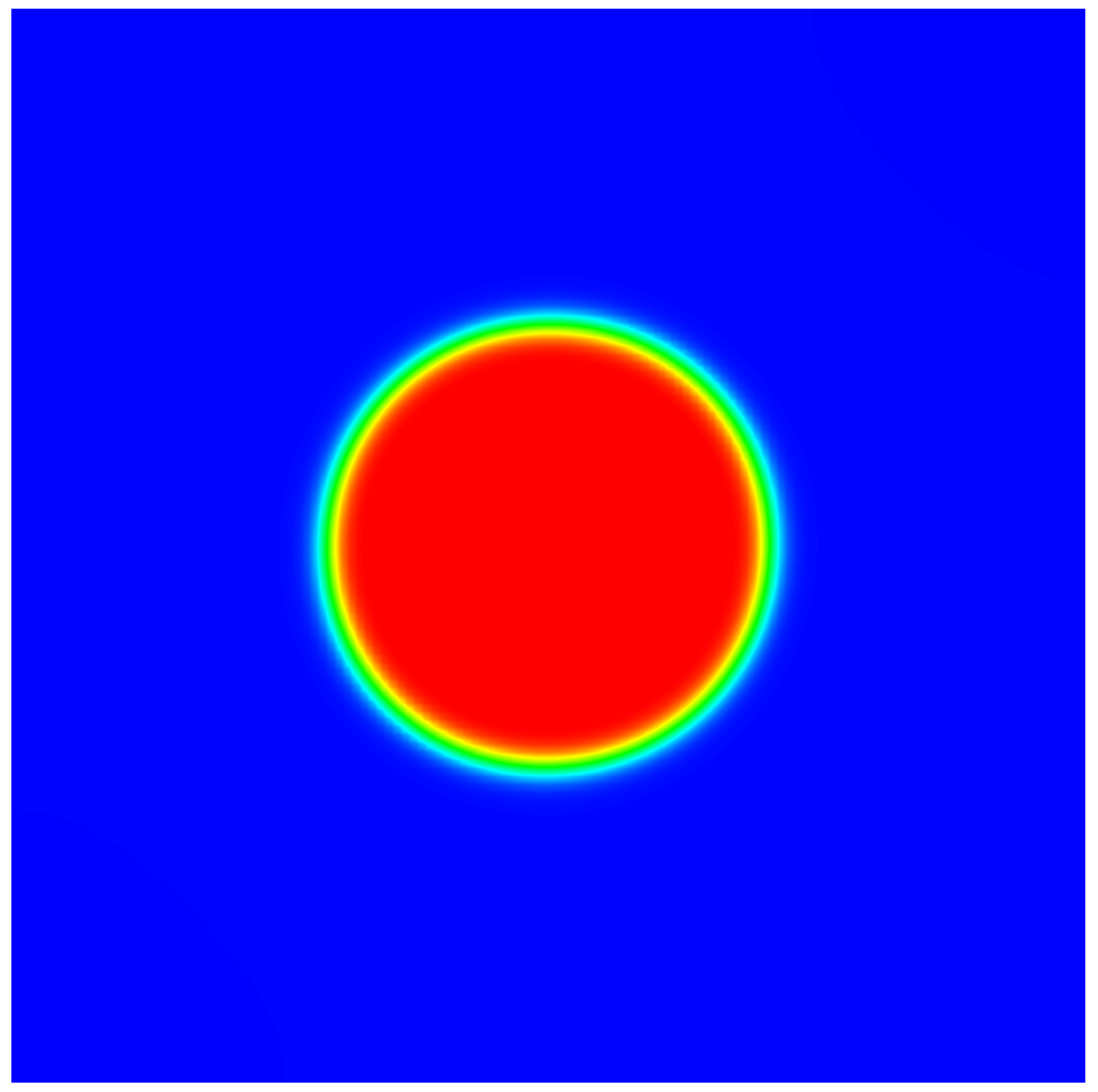}
\includegraphics[width=0.19\textwidth]{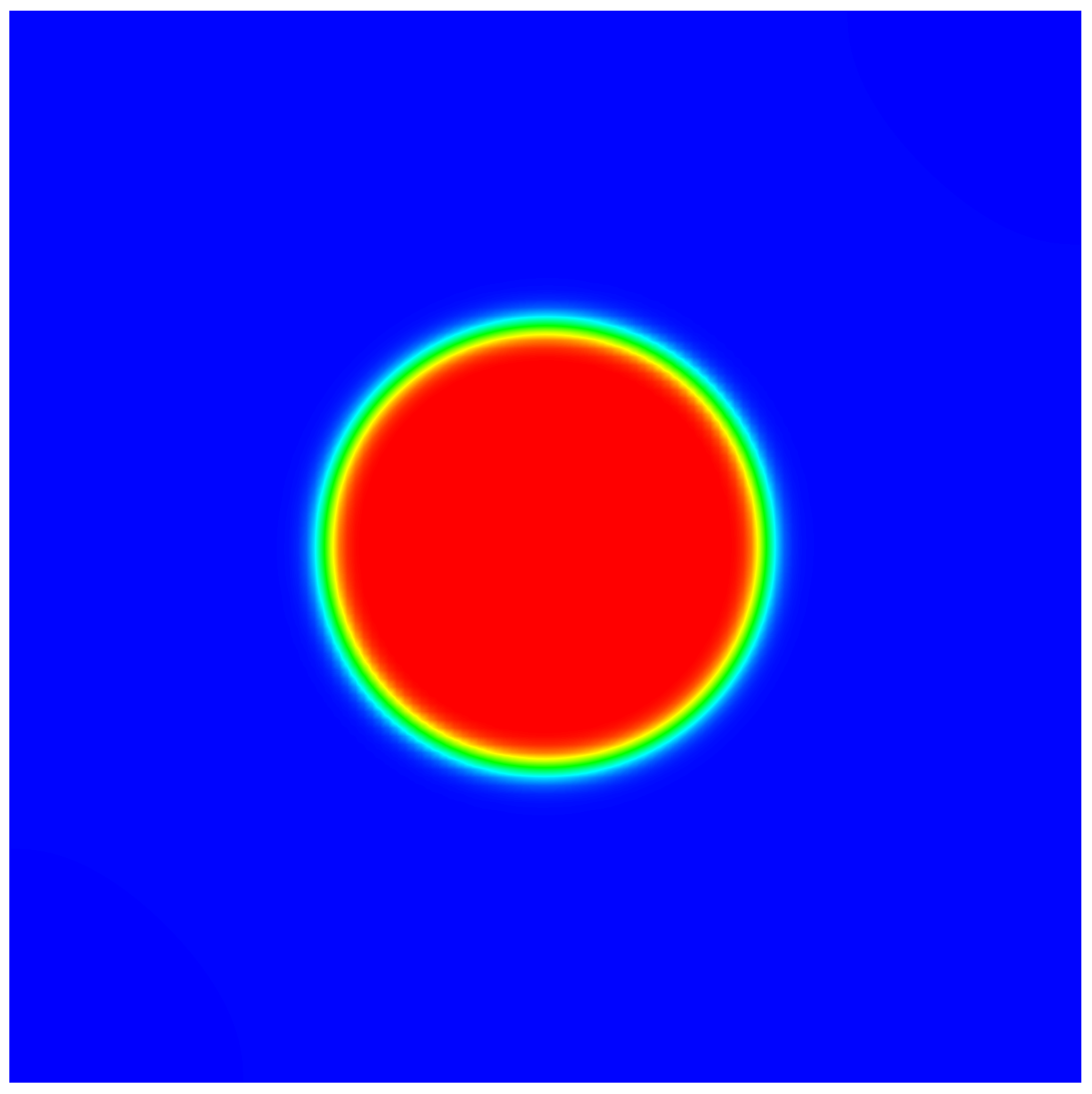}
\includegraphics[width=0.19\textwidth]{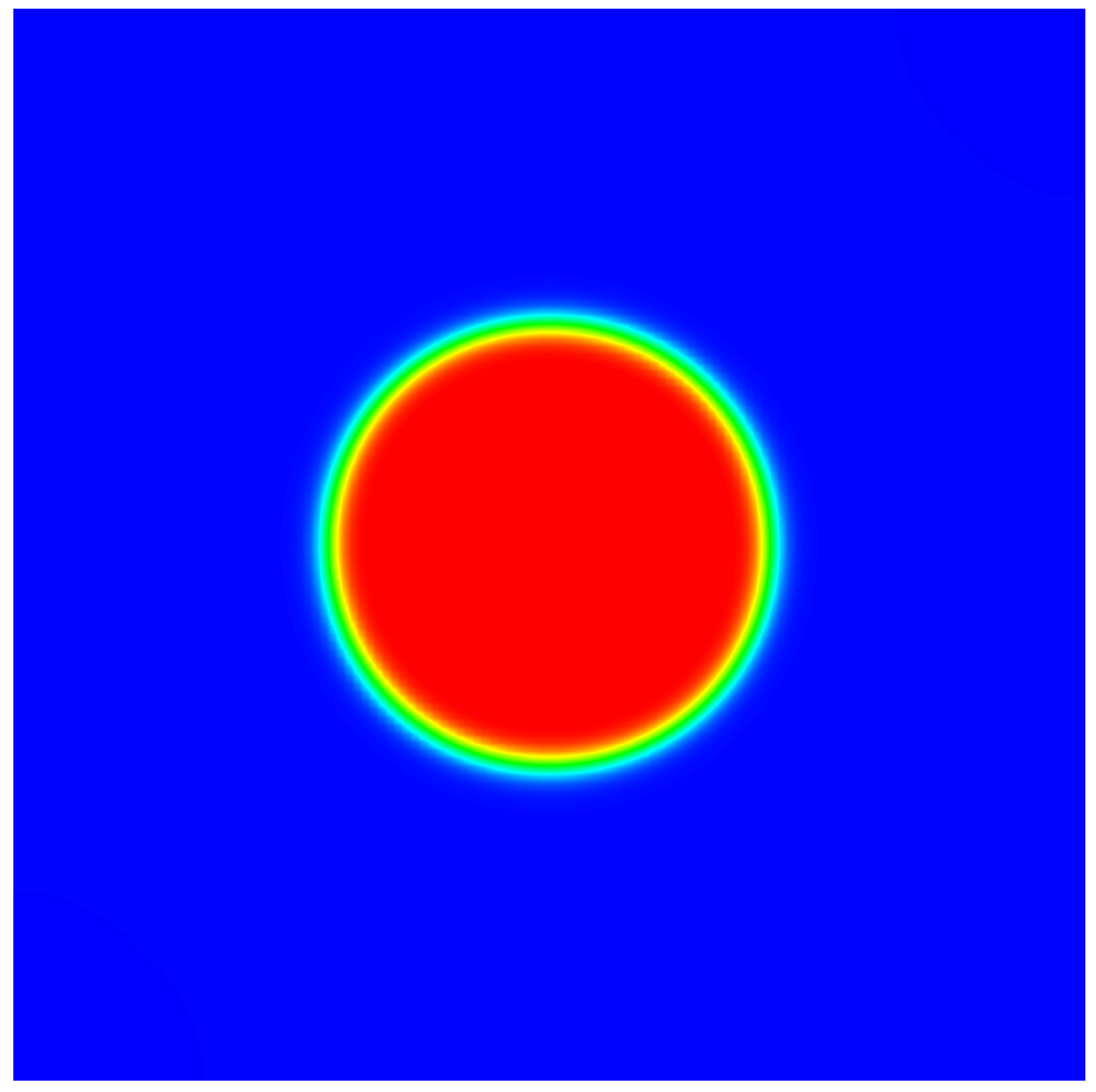}
\includegraphics[width=0.19\textwidth]{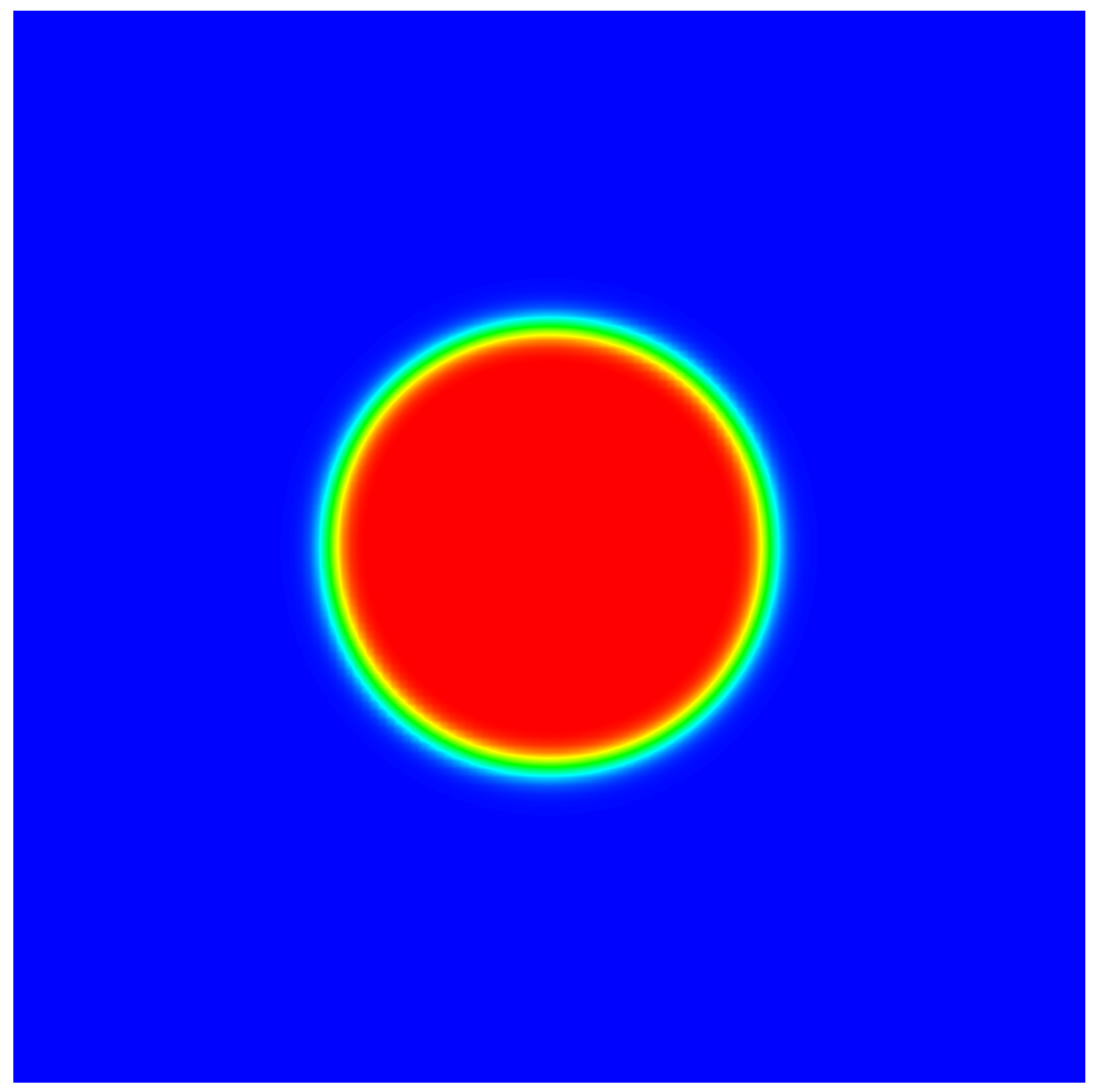}
}
\caption{The dynamics of bubble merging under hydrodynamic environment with viscosity $\eta = 0.01$. In this figure,  the profiles of the phase variable $\phi$ are shown at time $t=0$, $0.2$, $0.4$, $0.8$, $1.0$, $1.2$, $1.5$, $2.0$, $3.0$, $3.2$, $3.4$, $4.6$, $4.8$, $5$ and $10$.} 
\label{fig:ETA2}
\end{figure}

As a comparison, another simulation with the same parameters and other settings, except a smaller viscosity $\eta =0.001$. The profiles of the phase variables at the same times as the previous example are summarized in Figure \ref{fig:ETA3}. Though the two bubbles eventually merged into a single one and the round drop reaches a steady state, the dynamics between Figure \ref{fig:ETA2} and \ref{fig:ETA3} are dramatically different.  Mainly, when the viscosity is small, the effect of inertia is not negligible anymore. In  Figure \ref{fig:ETA3}, we observe that the drop squeezed and then stretched,  showing back-and-forth damping oscillations (due to the exchanges between the kinetic energy and the Helmholtz free energy). And it eventually stabilizes as a round drop. In other words, the hydrodynamics or the Navier-Stokes equation shall not be ignored when the inertia has noticeable effects on the dynamics.

\begin{figure}[H]
\center
\subfigure[$\phi$ at $t=0,0.2,0.4,0.8, 1.0$]{
\includegraphics[width=0.19\textwidth]{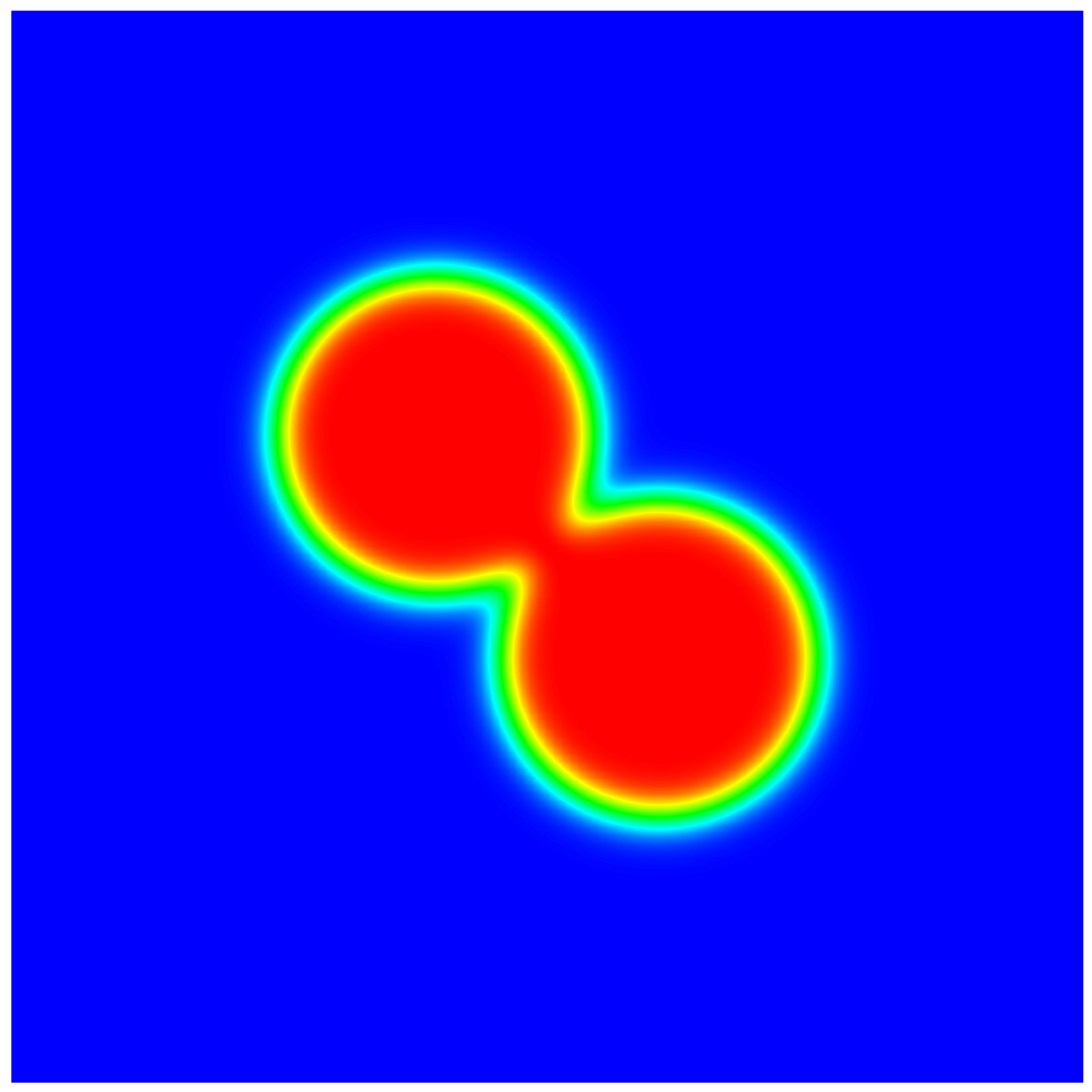}
\includegraphics[width=0.19\textwidth]{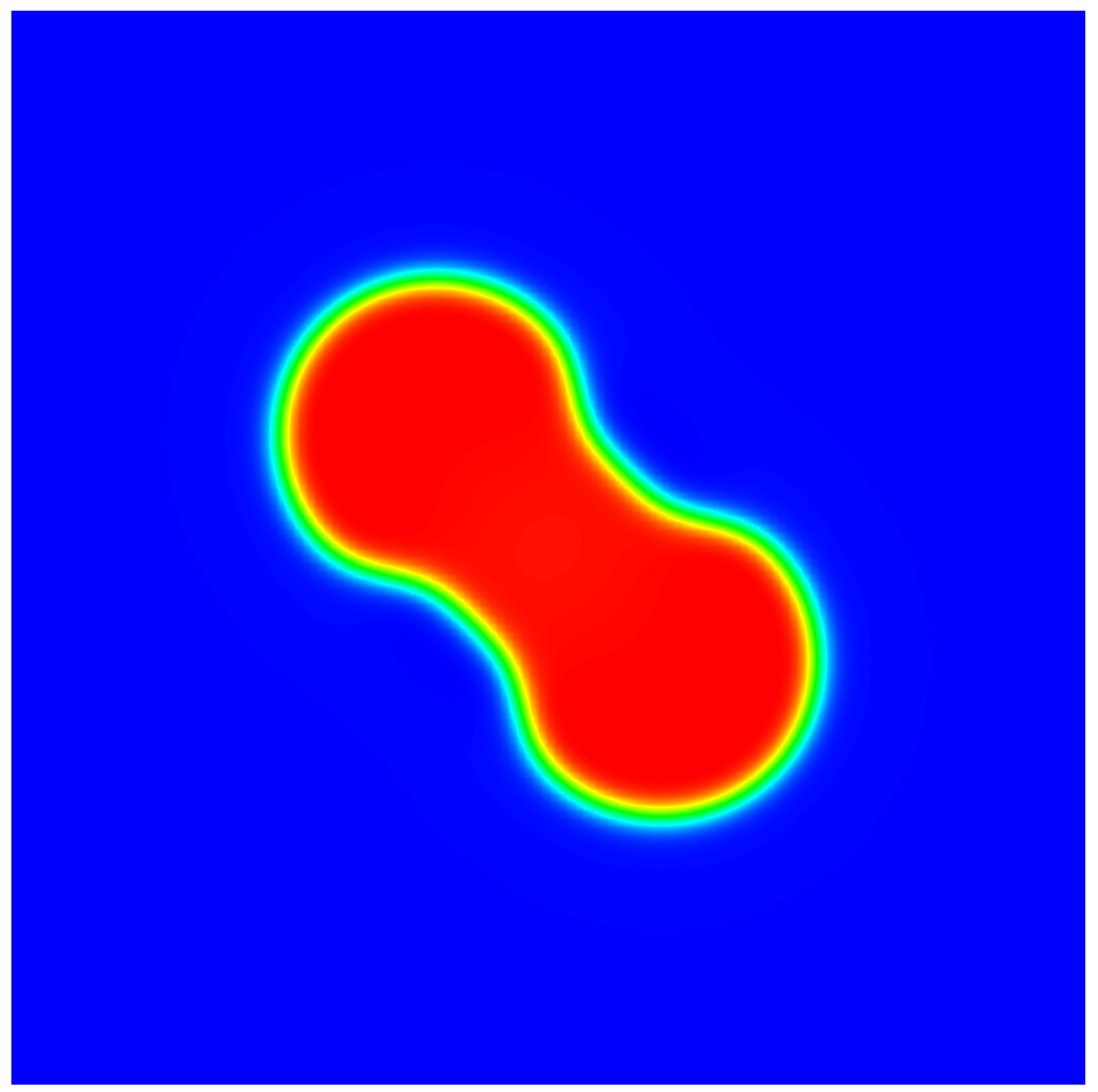}
\includegraphics[width=0.19\textwidth]{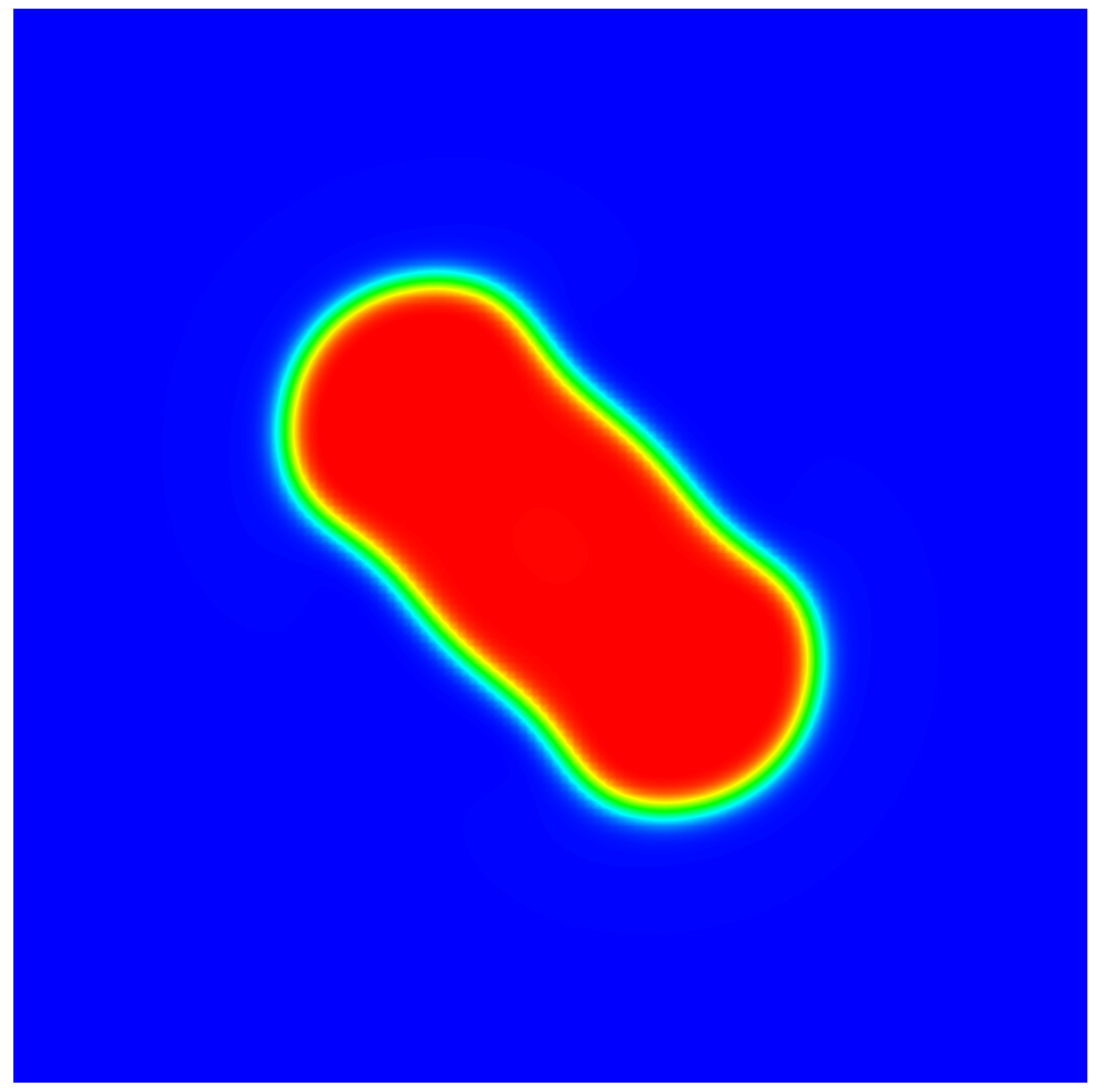}
\includegraphics[width=0.19\textwidth]{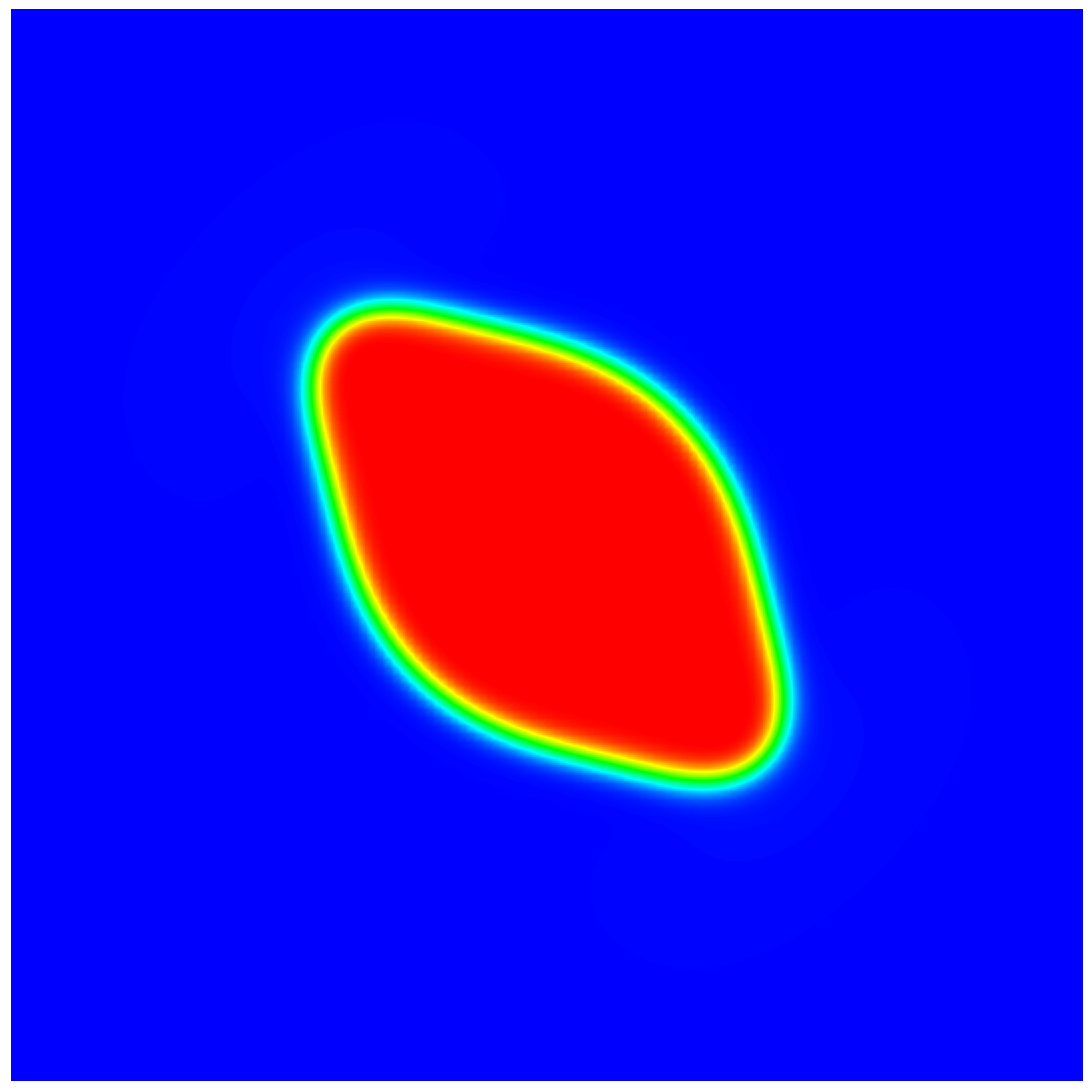}
\includegraphics[width=0.19\textwidth]{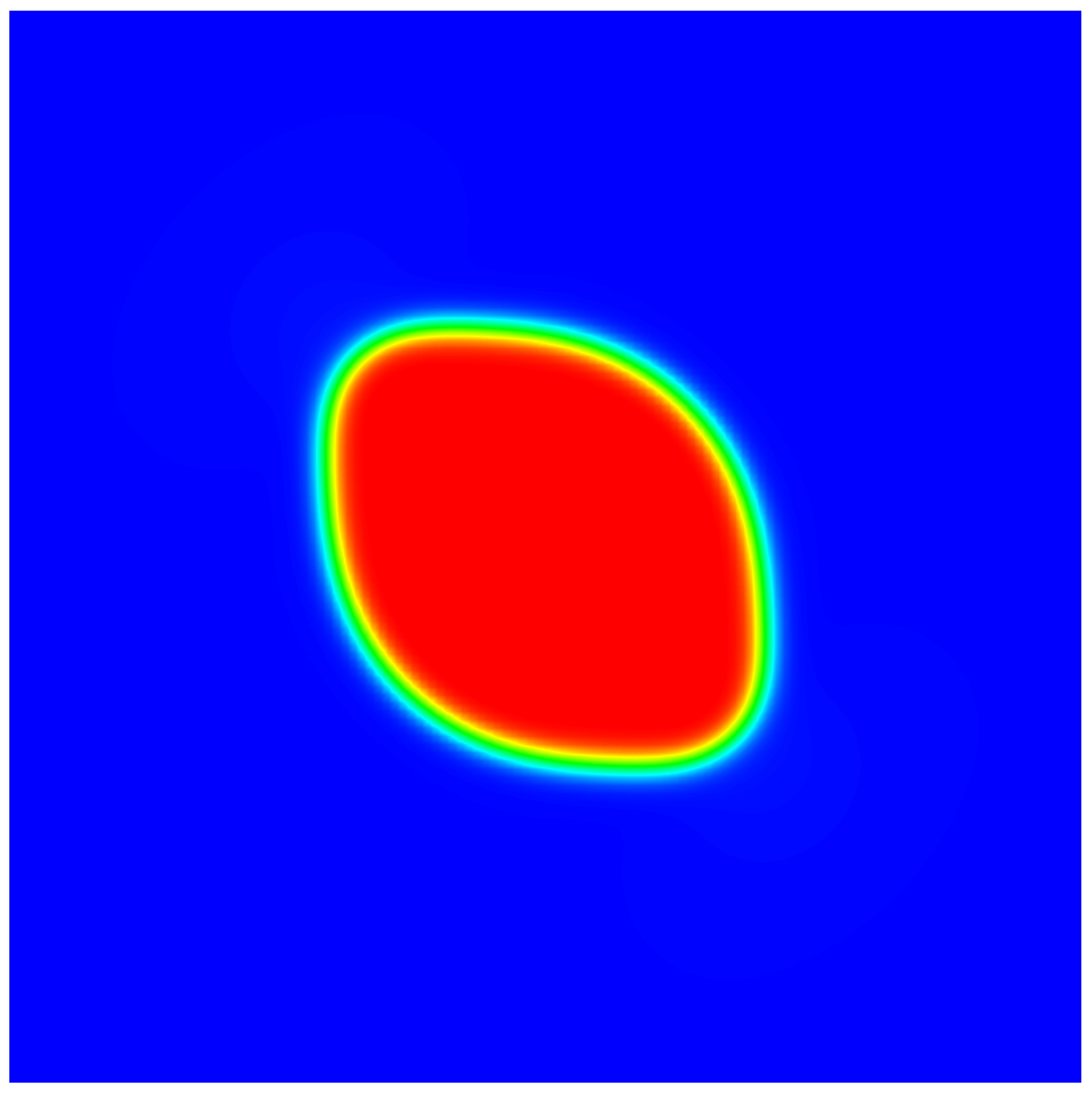}
}

\subfigure[$\phi$ at $t=1.2, 1.5, 2, 3, 3.2$]{
\includegraphics[width=0.19\textwidth]{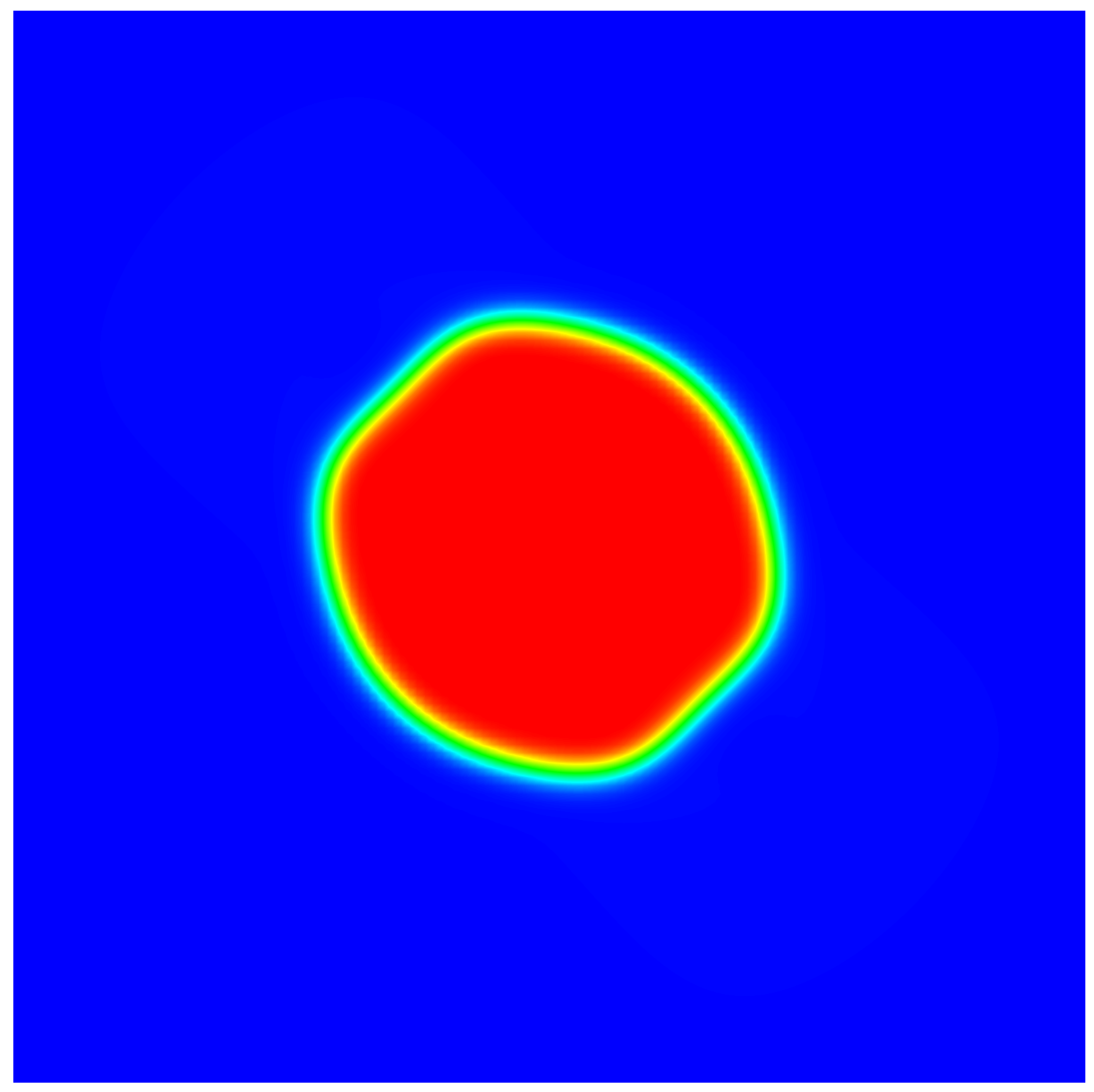}
\includegraphics[width=0.19\textwidth]{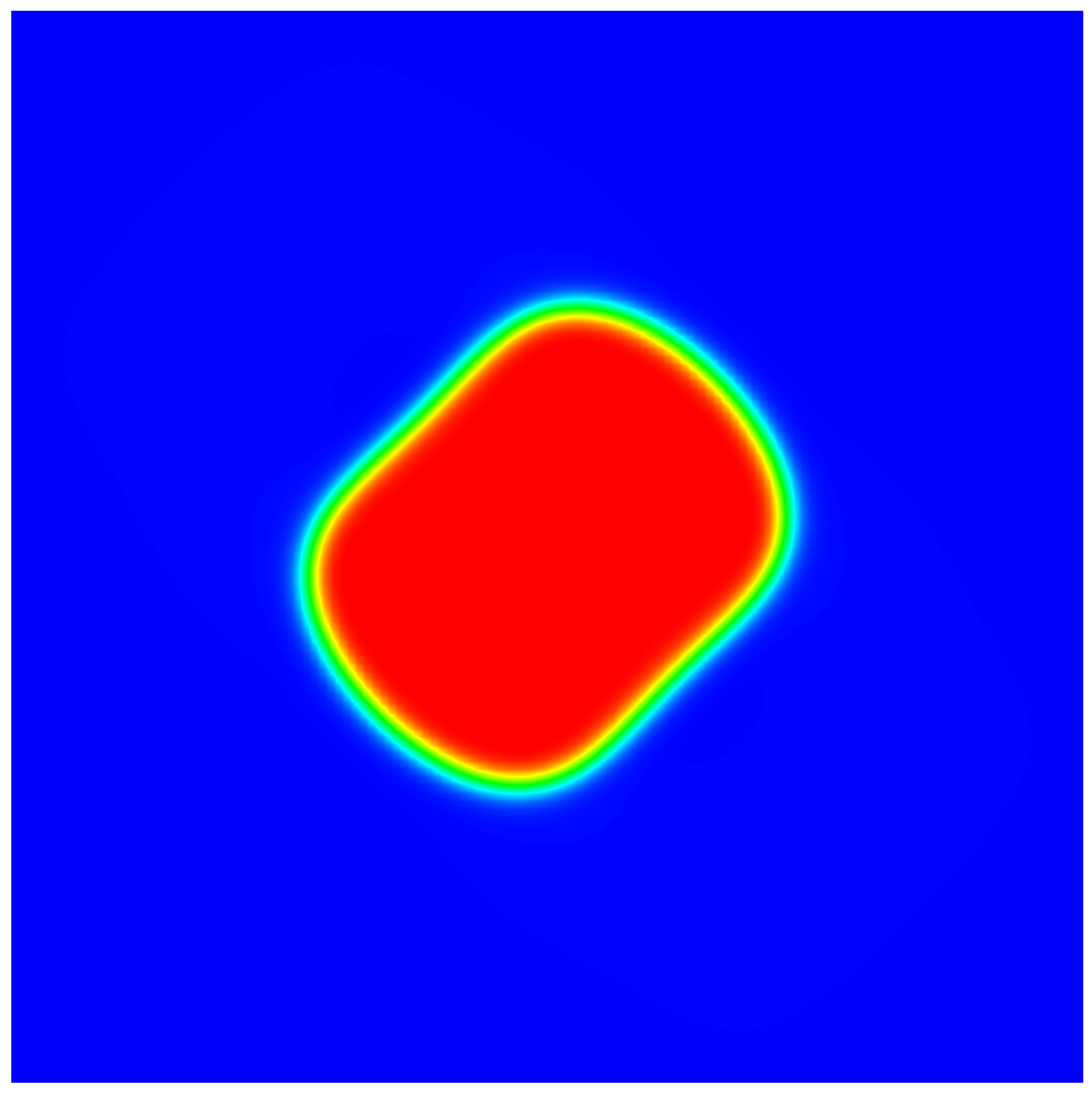}
\includegraphics[width=0.19\textwidth]{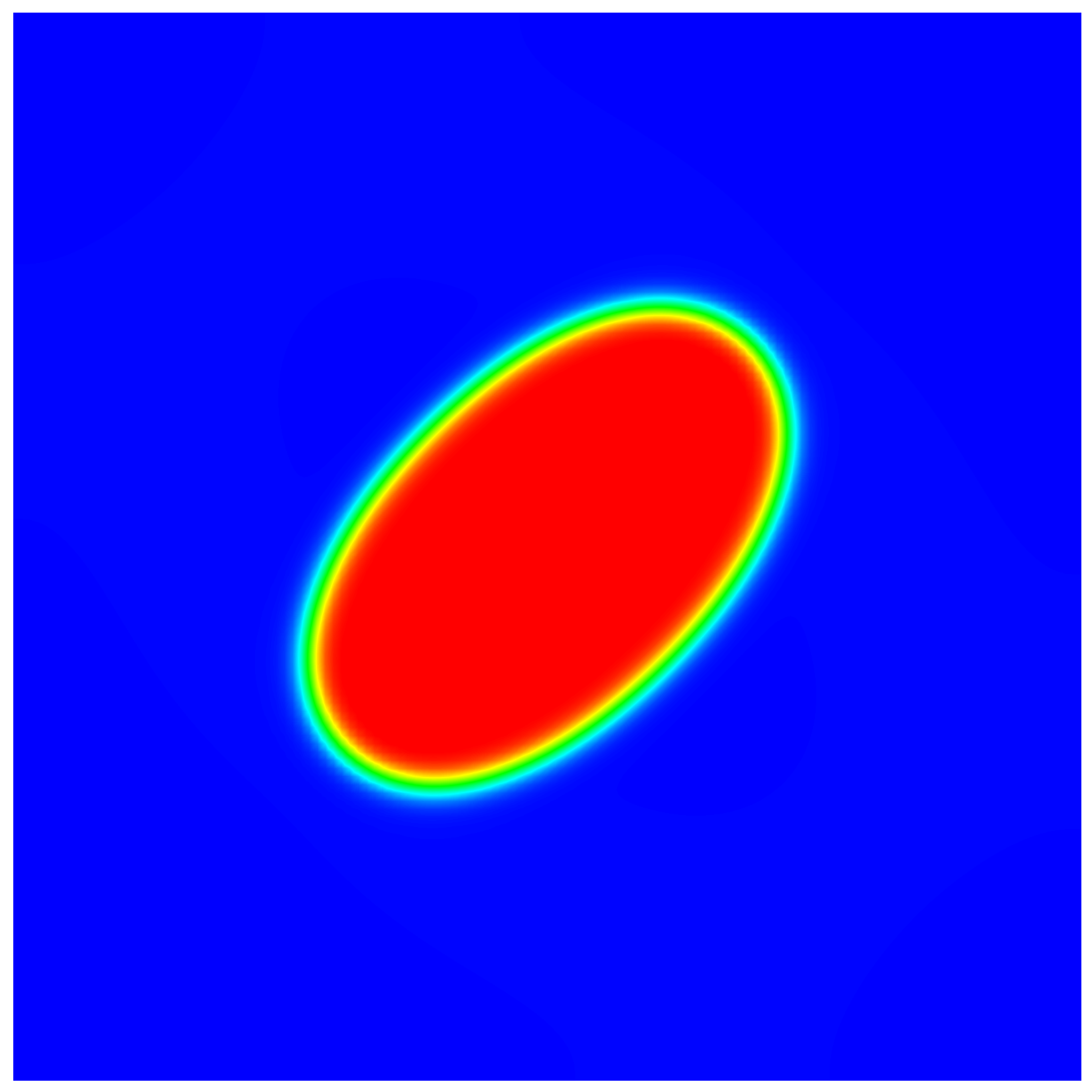}
\includegraphics[width=0.19\textwidth]{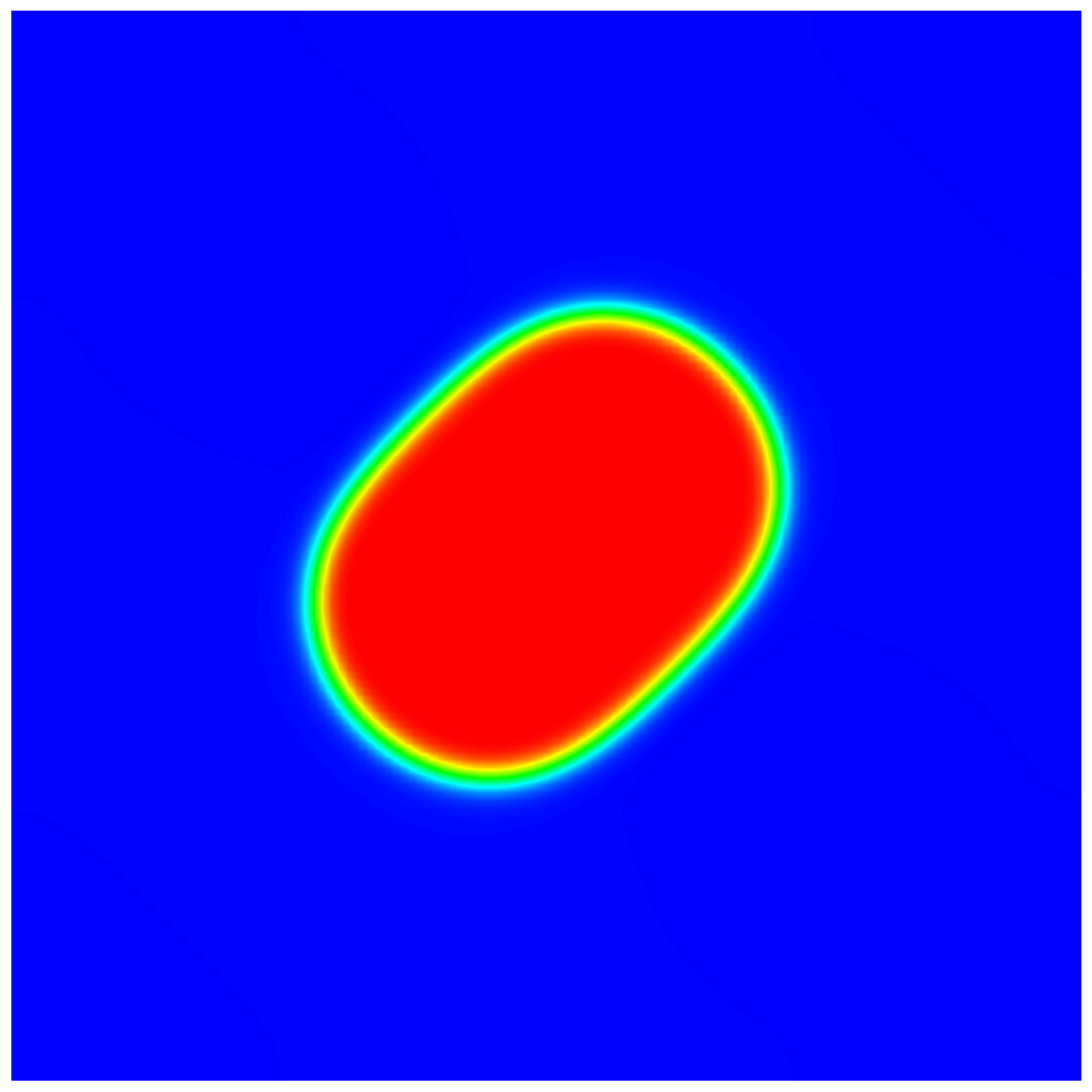}
\includegraphics[width=0.19\textwidth]{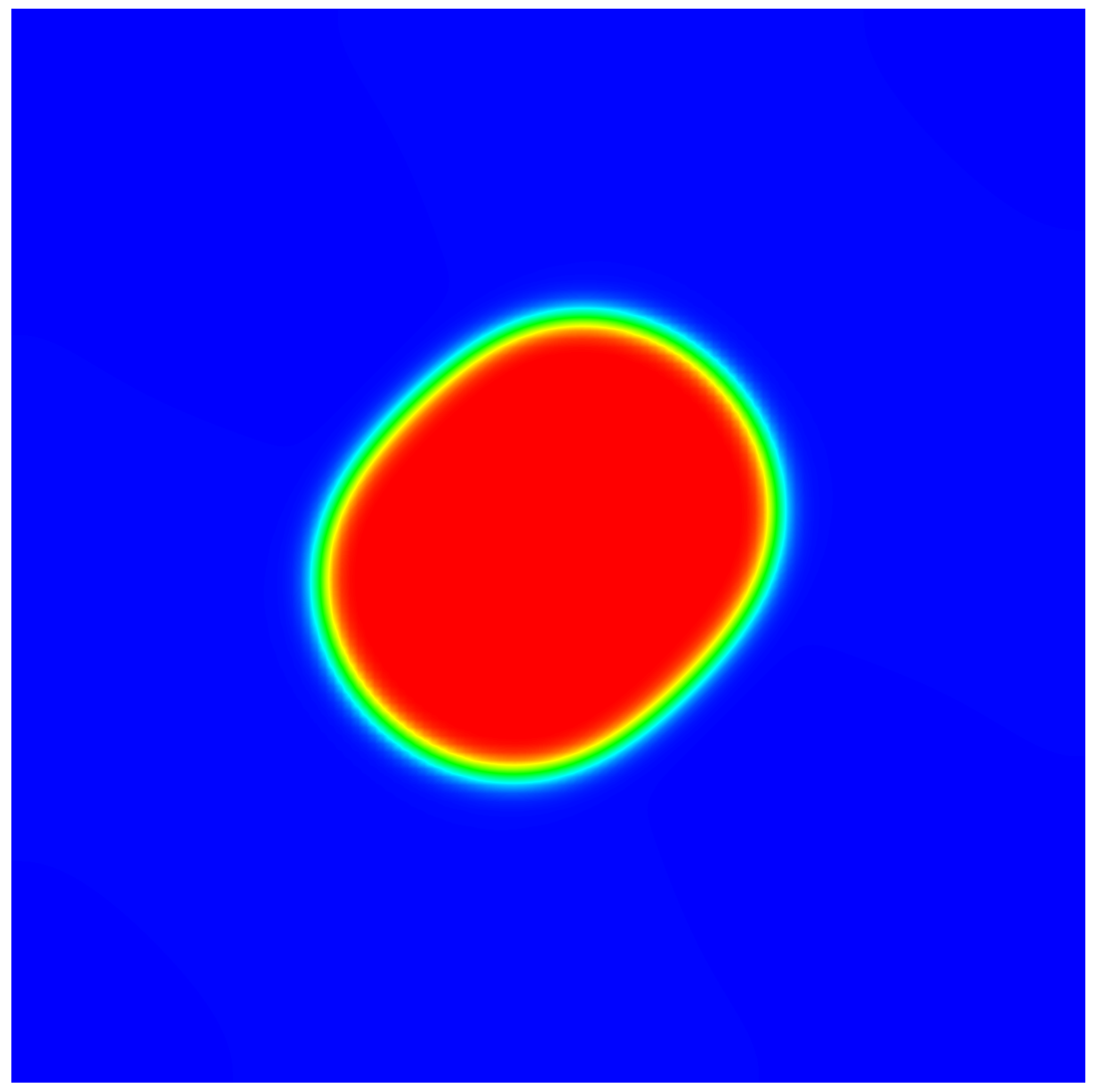}
}

\subfigure[$\phi$ at $t=3.4, 4.6, 4.8, 5, 10$]{
\includegraphics[width=0.19\textwidth]{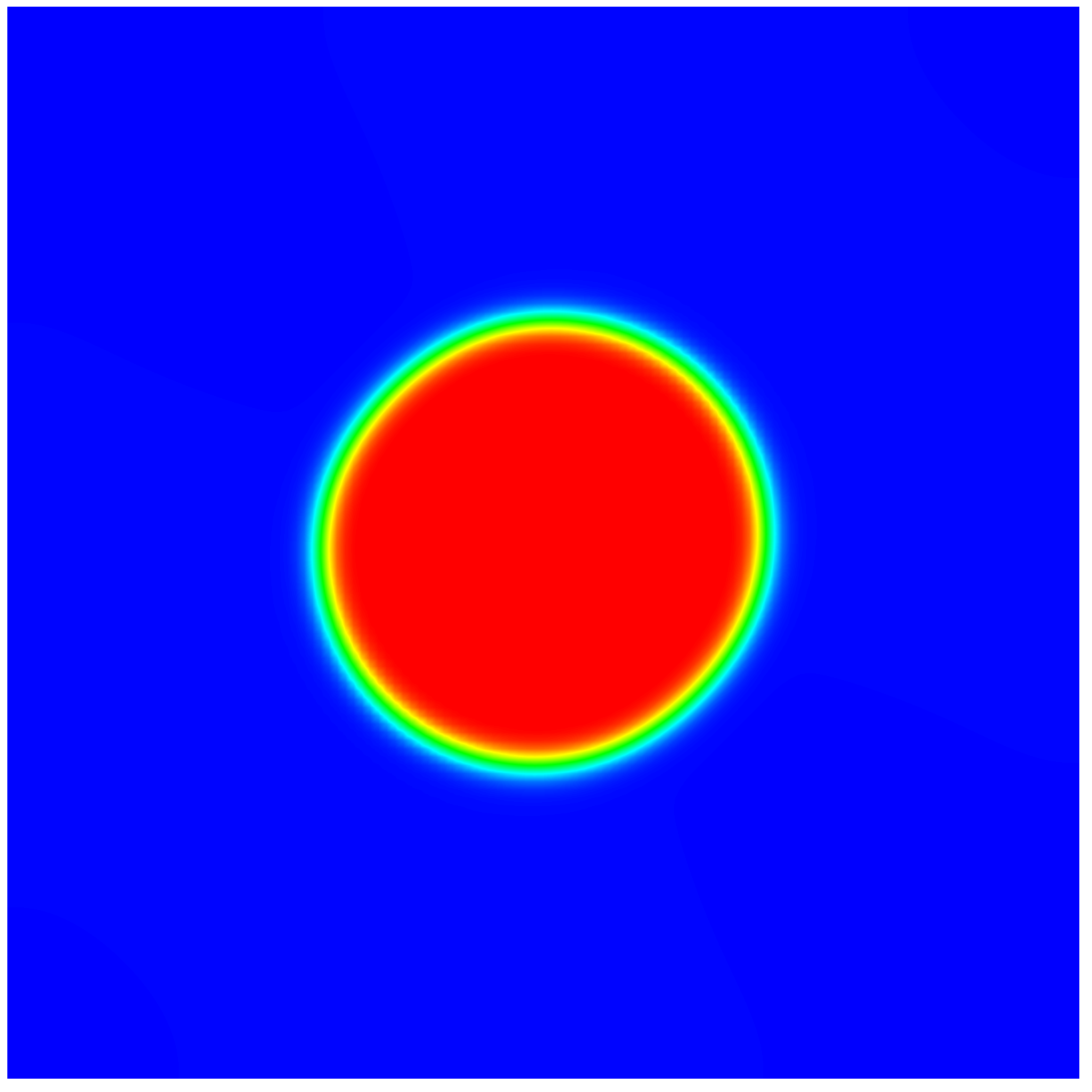}
\includegraphics[width=0.19\textwidth]{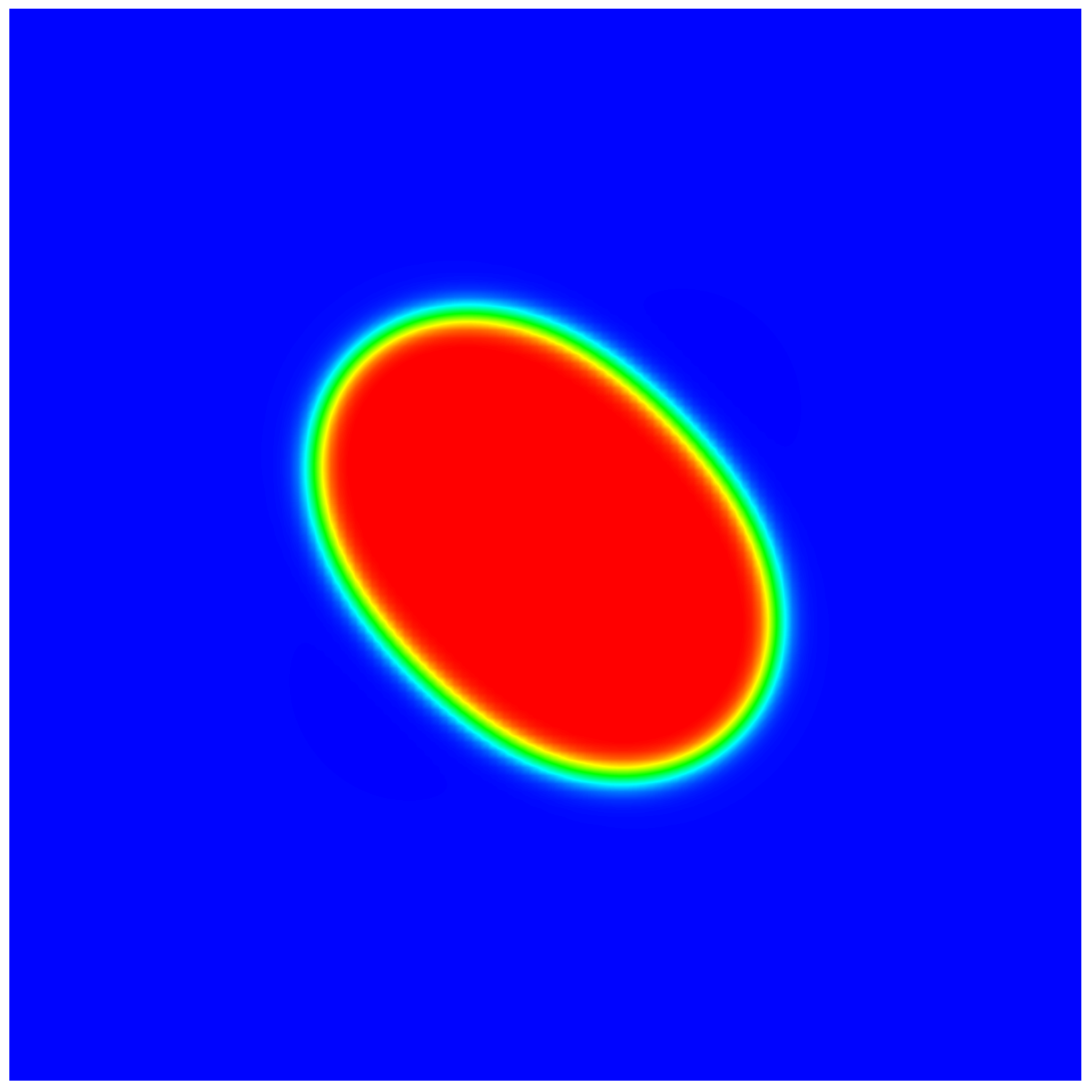}
\includegraphics[width=0.19\textwidth]{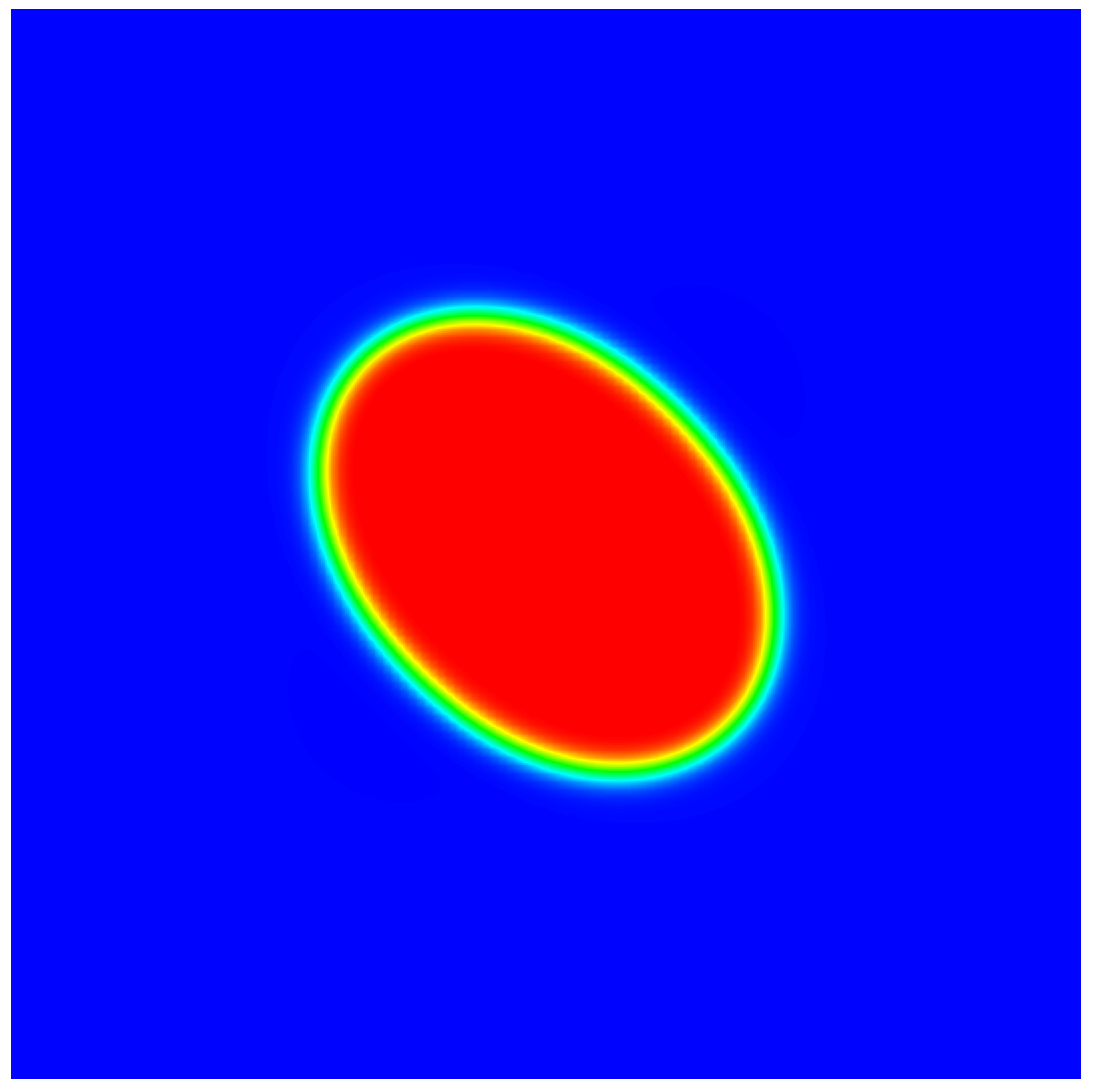}
\includegraphics[width=0.19\textwidth]{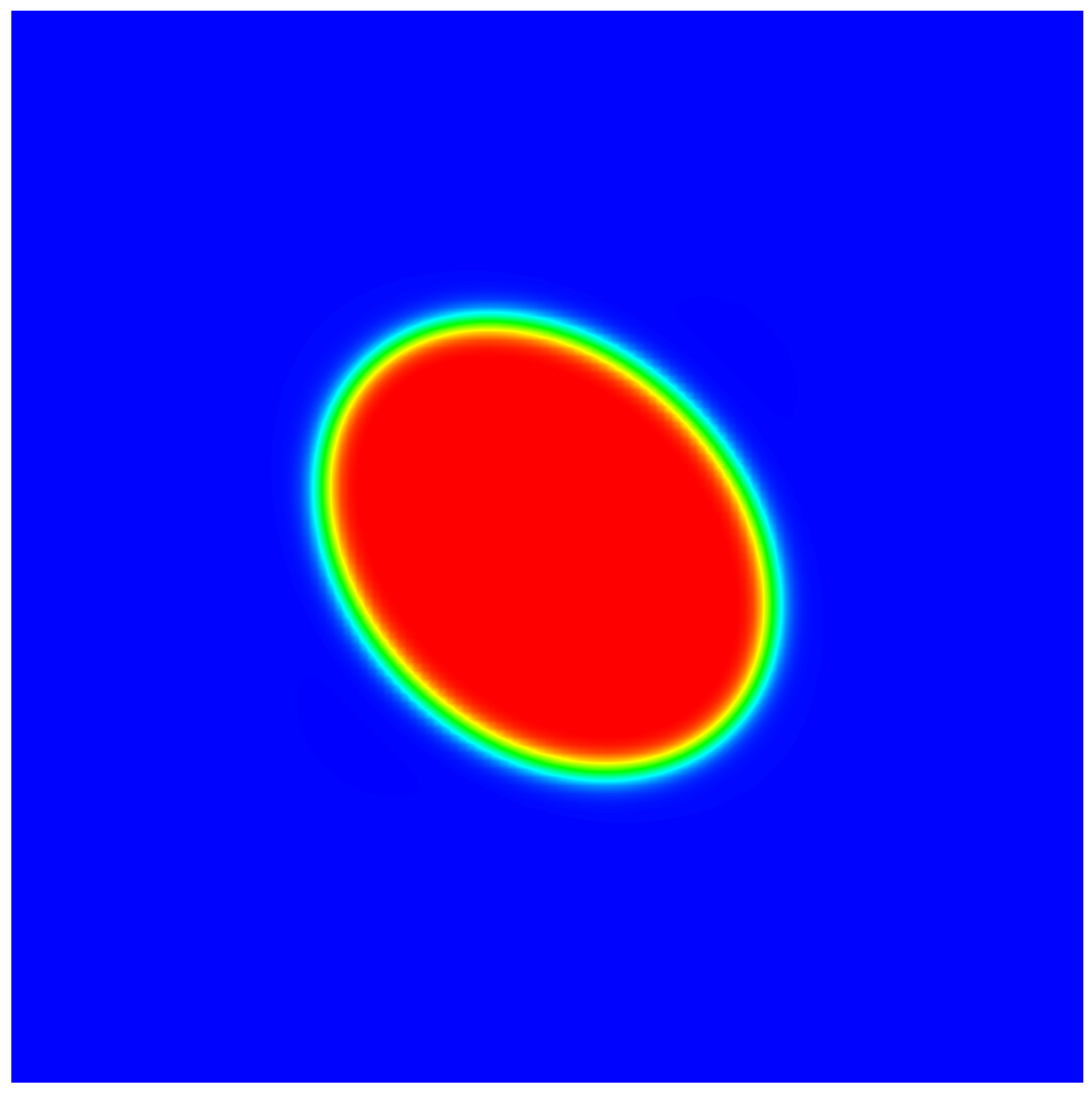}
\includegraphics[width=0.19\textwidth]{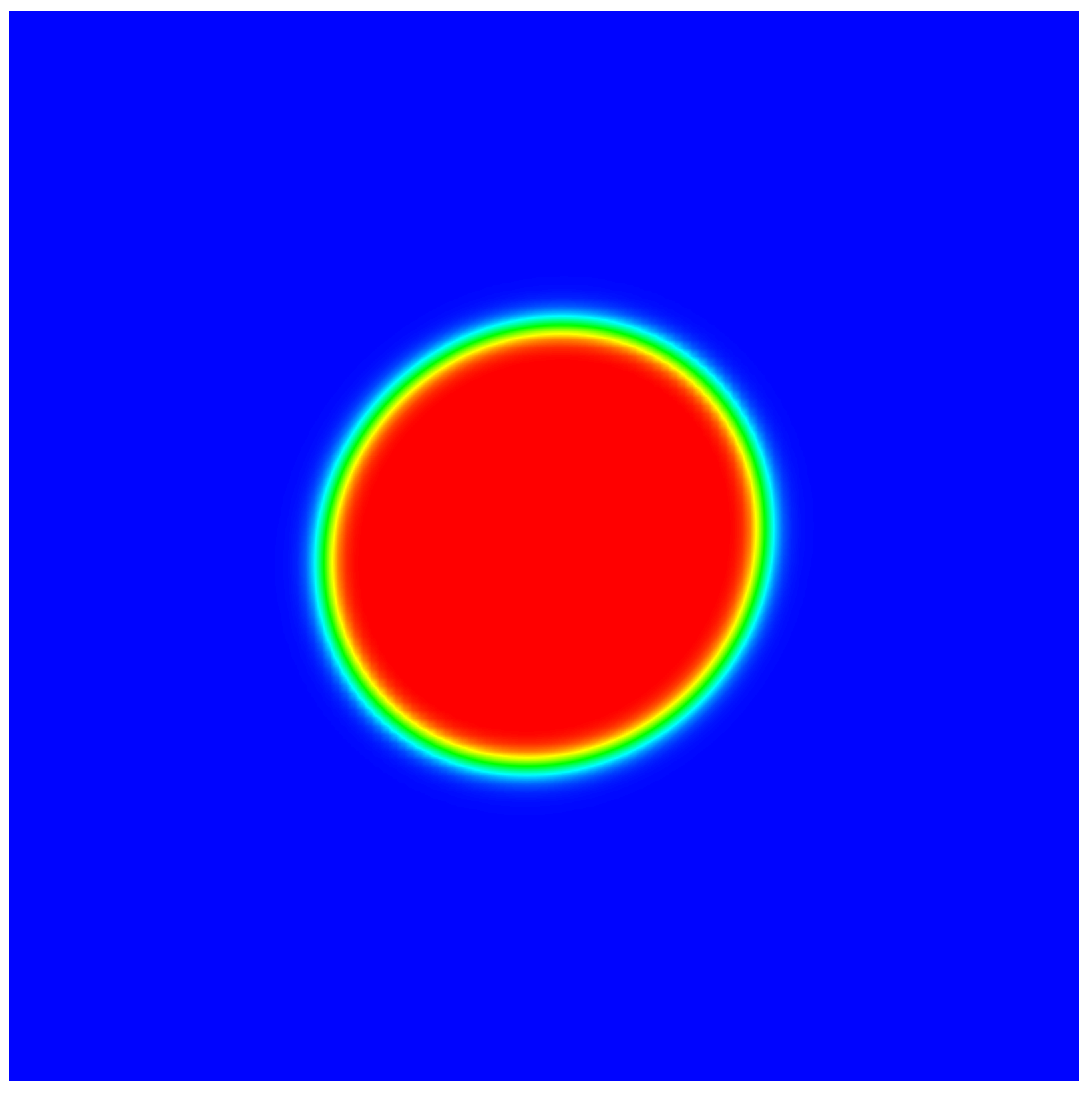}
}
\caption{The dynamics of bubble merging under hydrodynamic environment with viscosity $\eta = 0.001$. In this figure,  the profiles of the phase variable $\phi$ are shown at different times.} 
\label{fig:ETA3}
\end{figure}

To further compare the dynamics for the two cases above, we also visualize the velocity fields for both cases, with the results summarized in Figure \ref{fig:ETA-velo}. It can be shown that the kinetic energy is high for the case with smaller viscosity, and the kinetic energy shows oscillating and damping dynamics (by noticing the magnitude of the velocity field is decreasing with time).

\begin{figure}[H]
\center
\subfigure[Velocity field for Figure \ref{fig:ETA2} at time $t=1,2,5$]{
\includegraphics[width=0.32\textwidth]{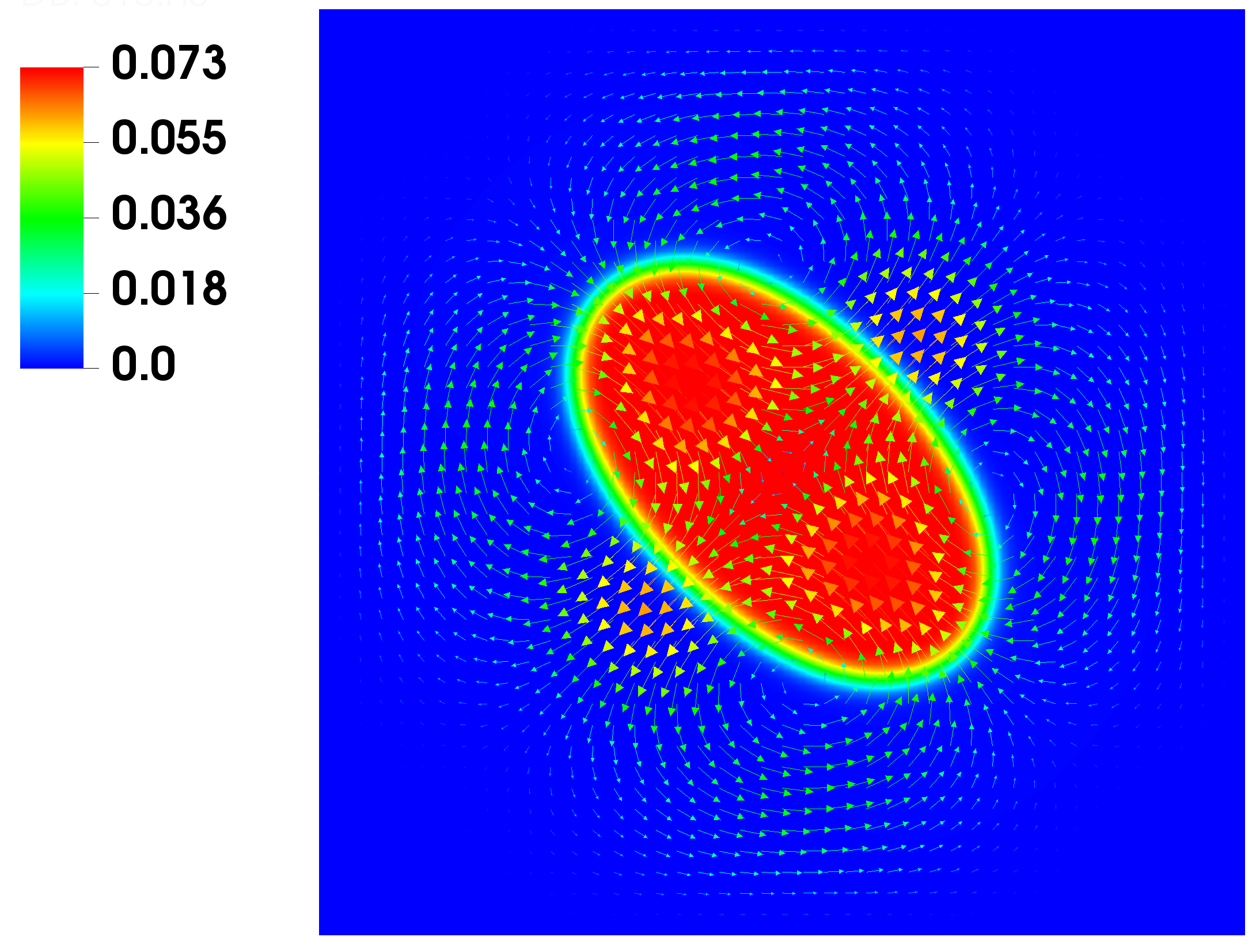}
\includegraphics[width=0.32\textwidth]{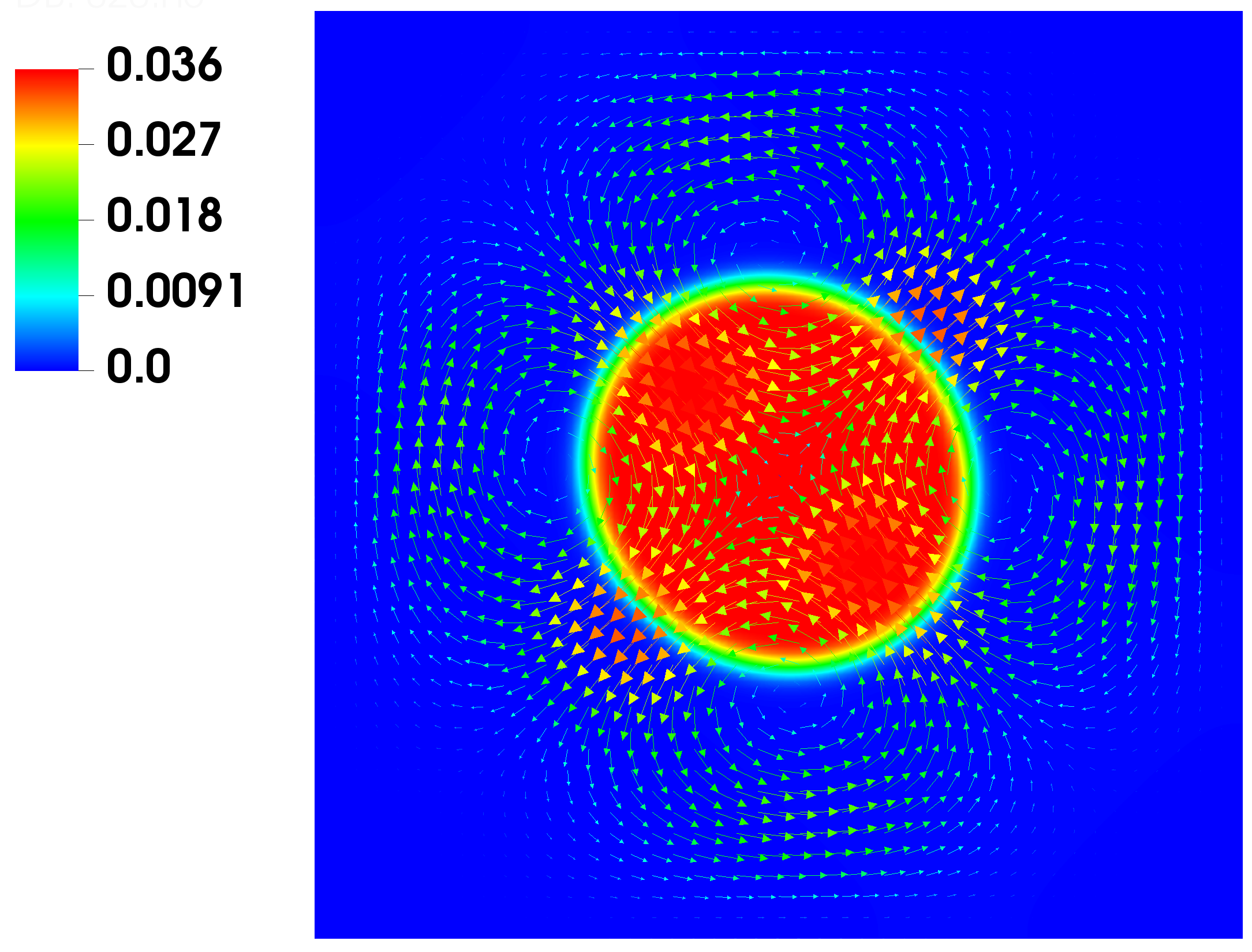}
\includegraphics[width=0.32\textwidth]{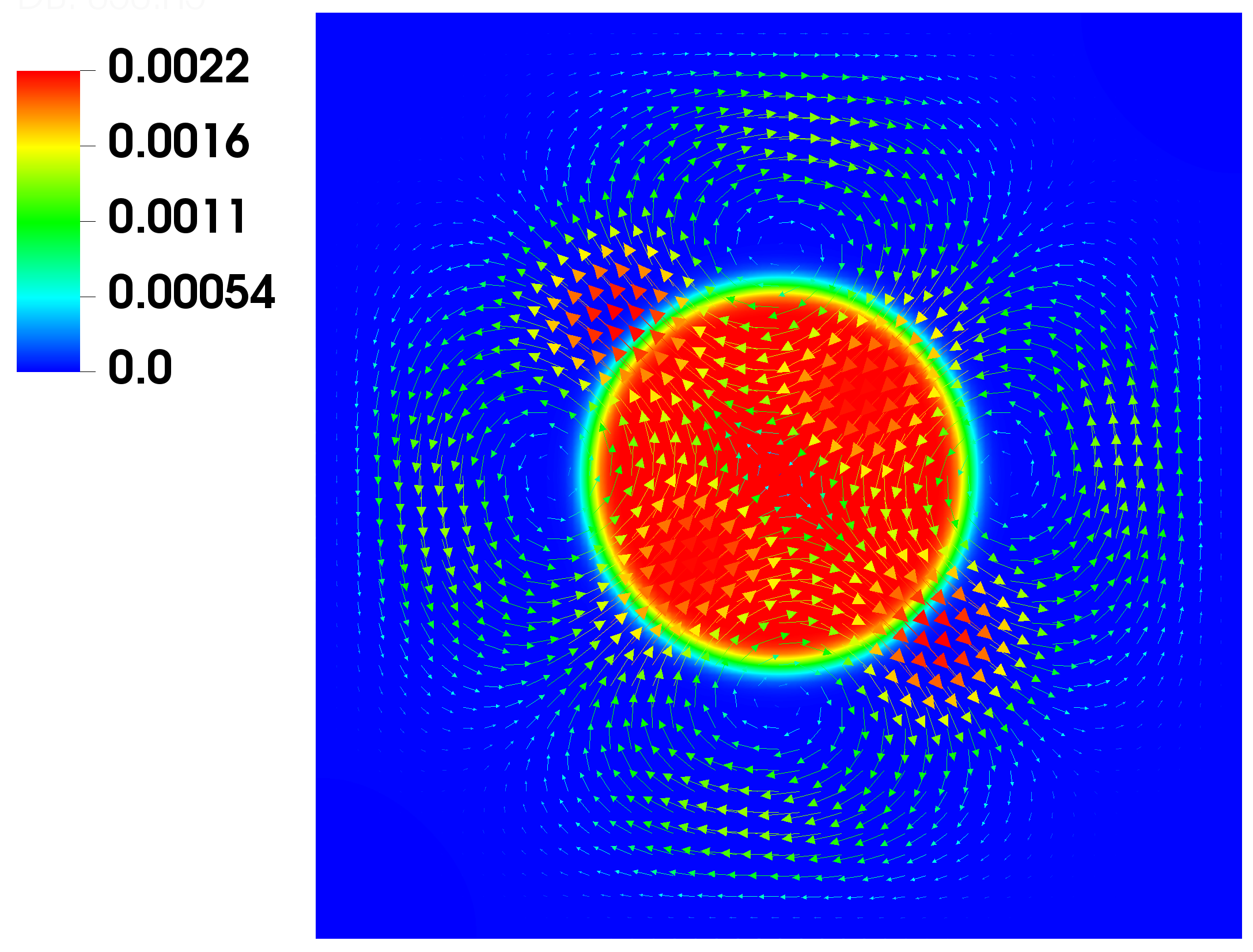}}

\subfigure[Velocity field for Figure \ref{fig:ETA3} at time $t=1,2,5$]{
\includegraphics[width=0.32\textwidth]{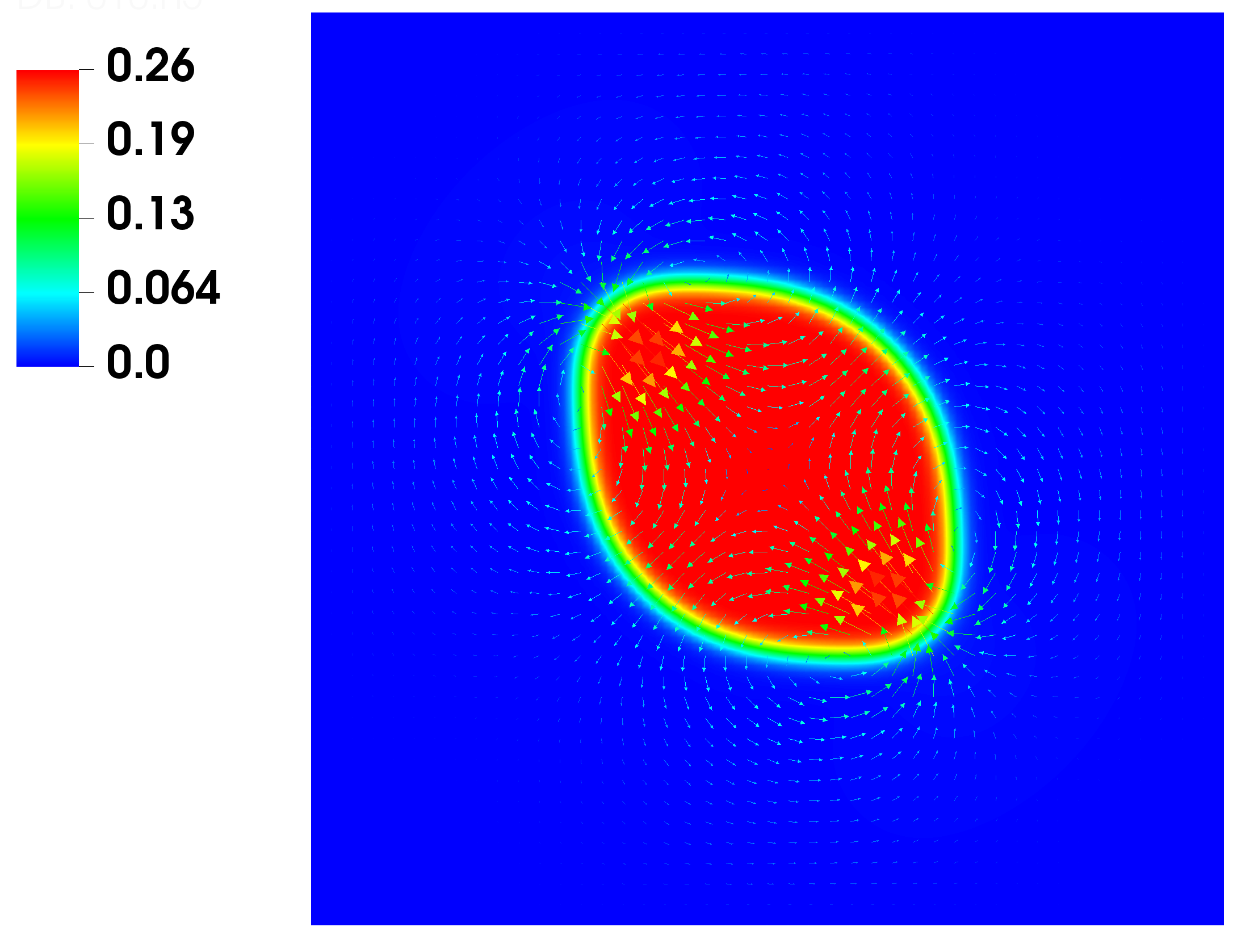}
\includegraphics[width=0.32\textwidth]{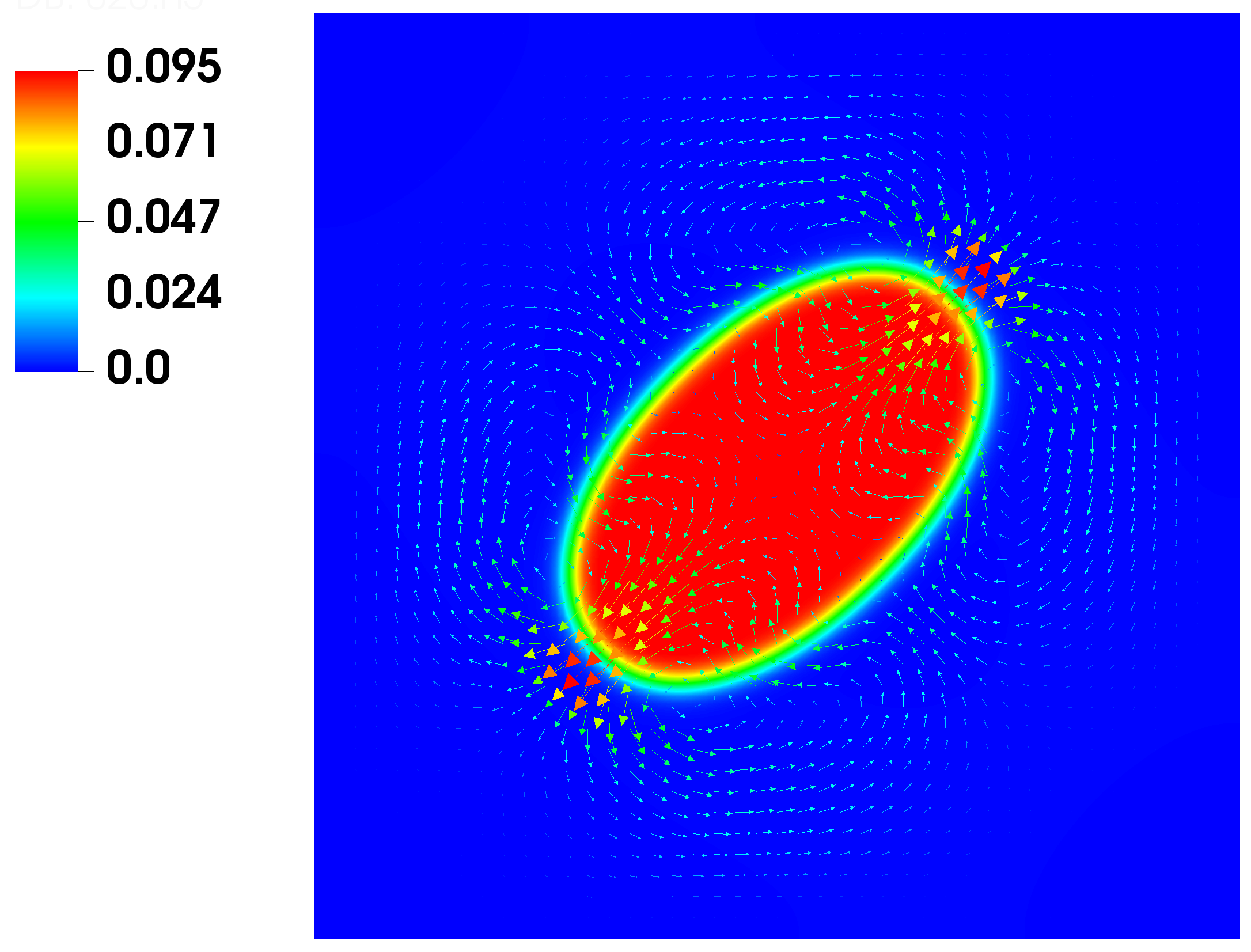}
\includegraphics[width=0.32\textwidth]{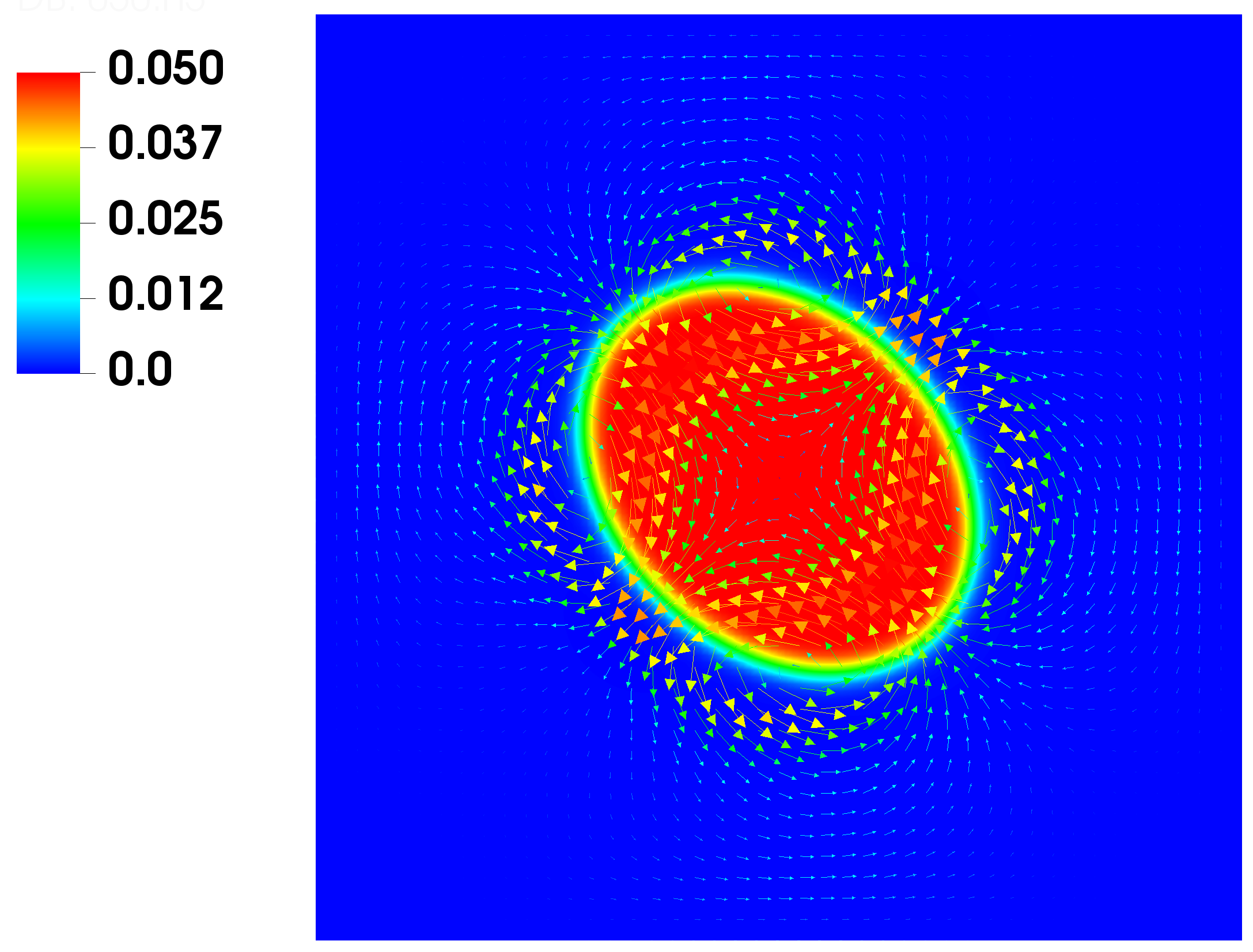}
}

\caption{Visualization of the velocity field for the dynamics in Figure \ref{fig:ETA2} and \ref{fig:ETA3}. (a) the velocity field for Figure \ref{fig:ETA2}; (b) the velocity field for Figure \ref{fig:ETA3}.}
\label{fig:ETA-velo}
\end{figure}

Meanwhile, as a double-verification of the energy stable property of our proposed scheme \ref{Scheme:sch-3}, the energy evolution with time for both cases are summarized in Figure \ref{fig:ETA-E}. We can observe that the energy is dissipating in time for both cases. The one with smaller viscosity is dissipating slower, which is reasonable, as the dissipation rate is proportional to the viscosity as shown in  \ref{eq:energy-law-continous}.

\begin{figure}[H]
\center
\subfigure[Energy evolution for Figure \ref{fig:ETA2}]{\includegraphics[width=0.45\textwidth]{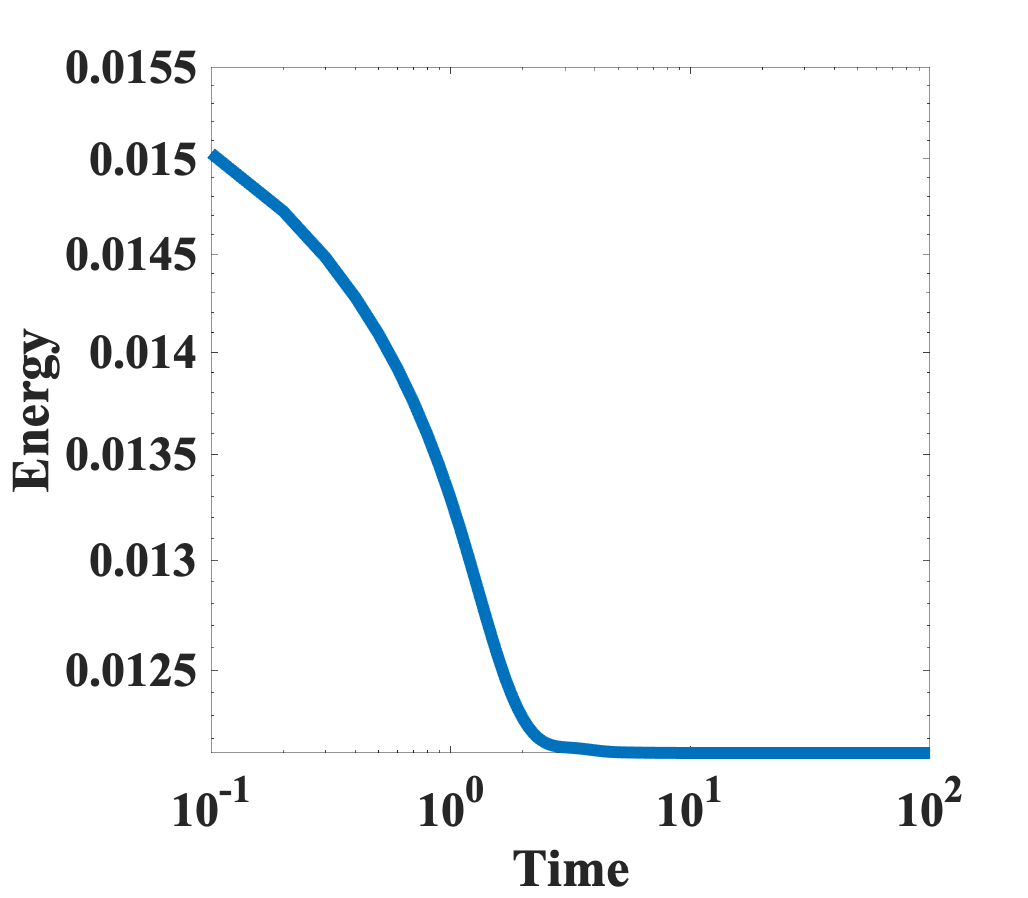}}
\subfigure[Energy evolution for Figure \ref{fig:ETA3}]{\includegraphics[width=0.45\textwidth]{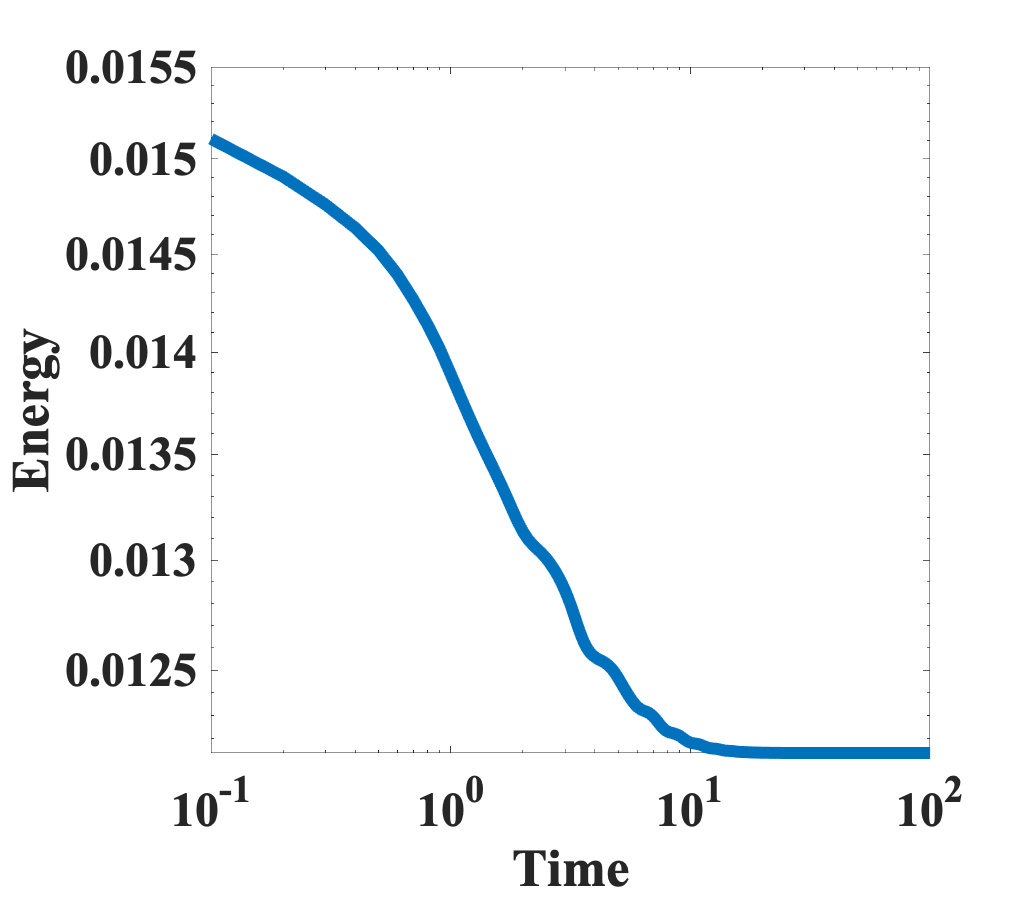}}
\caption{The time evolution of the energy in log-log scale.}
\label{fig:ETA-E}
\end{figure}

\subsection{Coarsening}

Next, we investigate the coarsening dynamics under hydrodynamics environments. We use the domain $\Omega=[0, L_x] \times [0, L_y]$ with $L_x=1$, $L_y=2$. The parameters are picked as $\rho =1$, $\eta = 1.0$, $\lambda = 0.01$, $\varepsilon=0.01$.  To solve the problem, we use uniform meshes with $Nx=128$ and $Ny=256$. And we pick different surface tension $\gamma$. Se set a random initial condition for the phase variable 
$\phi(x, y, t=0) = 0.9 (\frac{y}{L_y} - 0.5) + 0.001 * rand(-1, 1)$, and $\bu(x,y, t=0) = \mathbf{0}$. 

We choose two different surface tension $\gamma=0.1$ and $\gamma=0.01$. The numerical results are summarized in Figure \ref{fig:coarsening-gam}. We observe that when the volume fraction of two phases is similar, saying in the middle of the domain, spinodal decomposition takes more effect. Meanwhile, when the volume fractions of each phase differ dramatically, the nucleation takes more effect. This agrees well with the results in the literature. 

\begin{figure}[H]
\center 
\subfigure[$\phi$ at $t=0.1, 0.5, 1, 2, 6.5$ for the case $\gamma=0.1$]{
\includegraphics[width=0.19\textwidth]{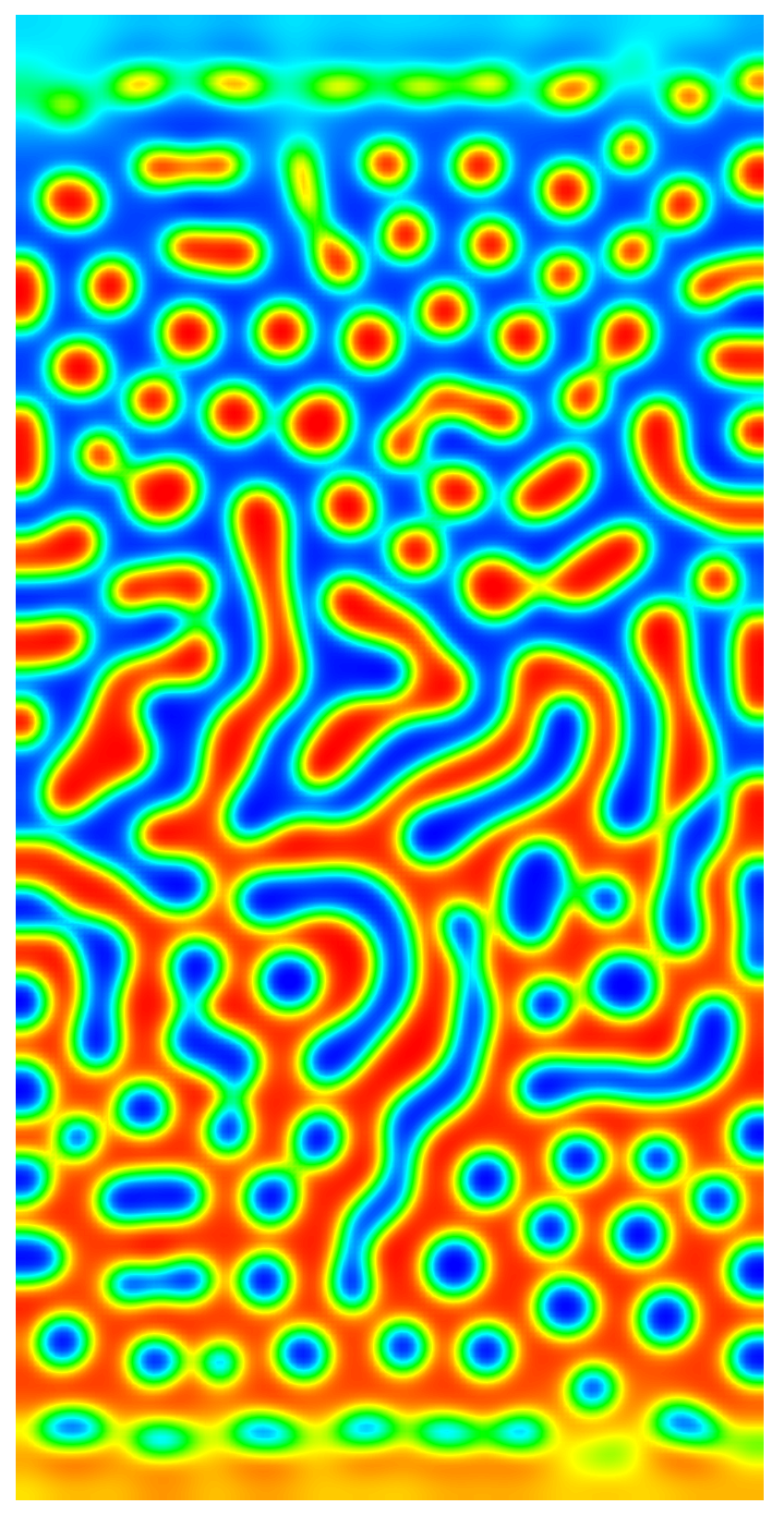}
\includegraphics[width=0.19\textwidth]{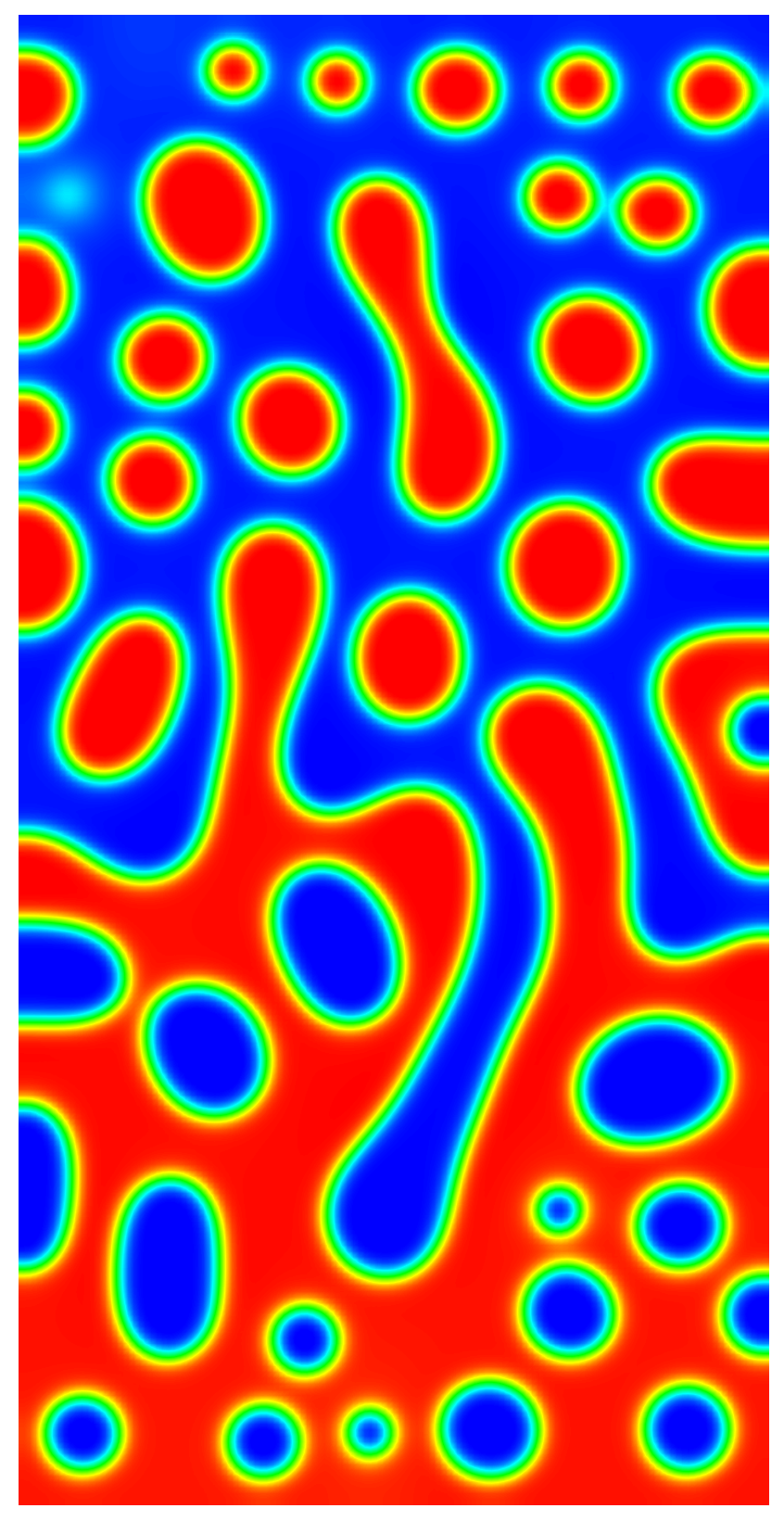}
\includegraphics[width=0.19\textwidth]{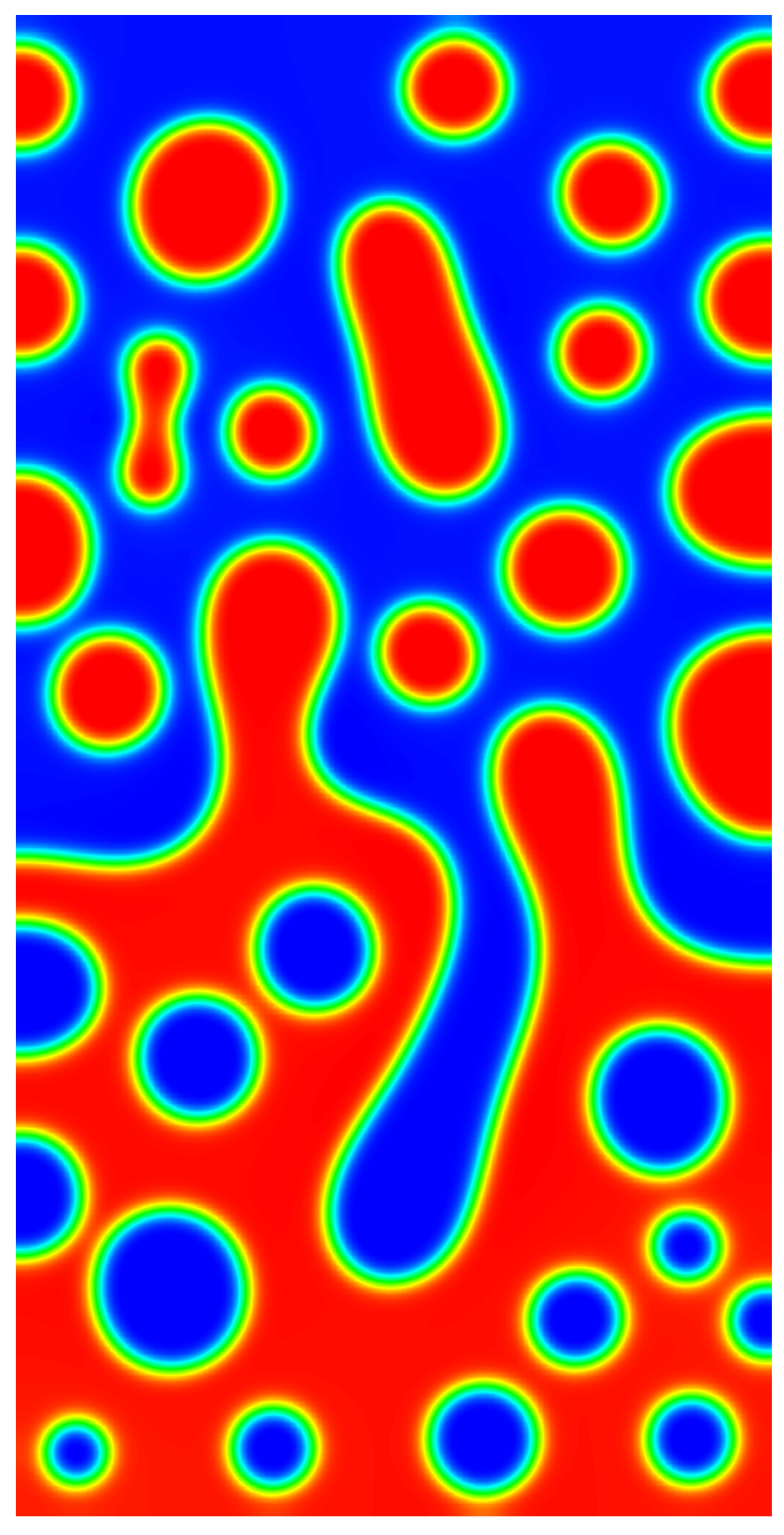}
\includegraphics[width=0.19\textwidth]{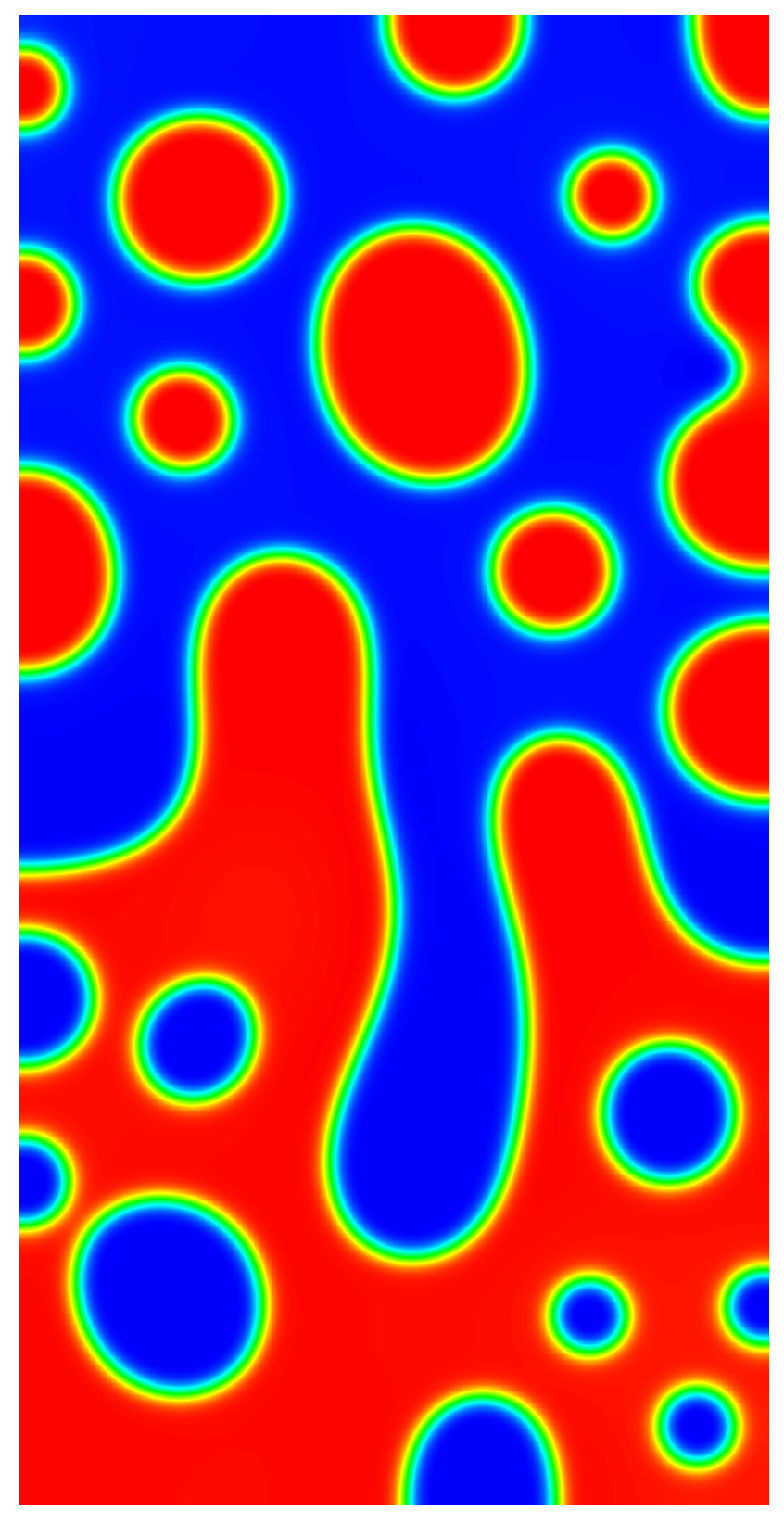}
\includegraphics[width=0.19\textwidth]{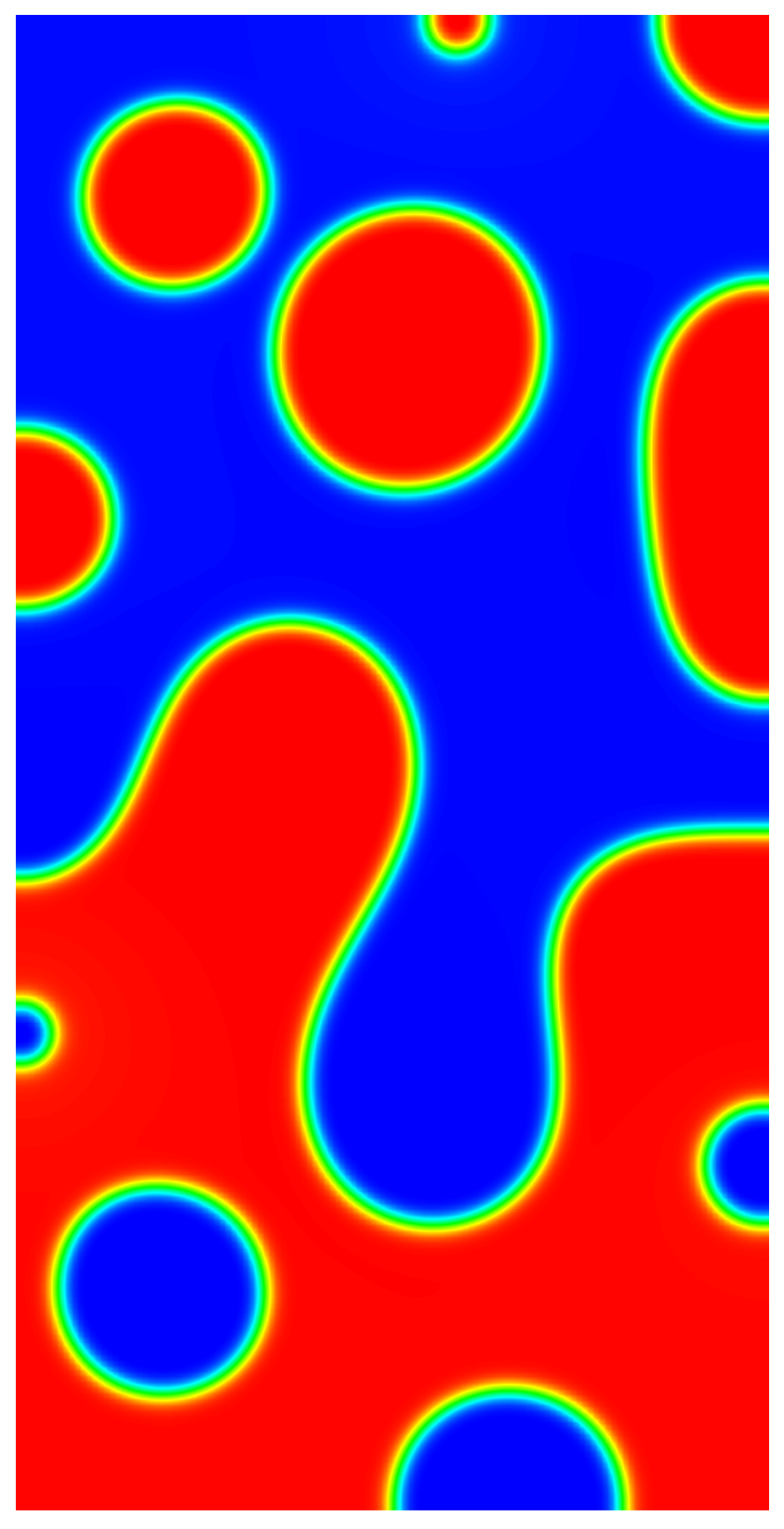}
}

\subfigure[$\phi$ at $t=0.1, 0.5, 1, 2, 6.5$ for the case $\gamma=0.01$]{
\includegraphics[width=0.19\textwidth]{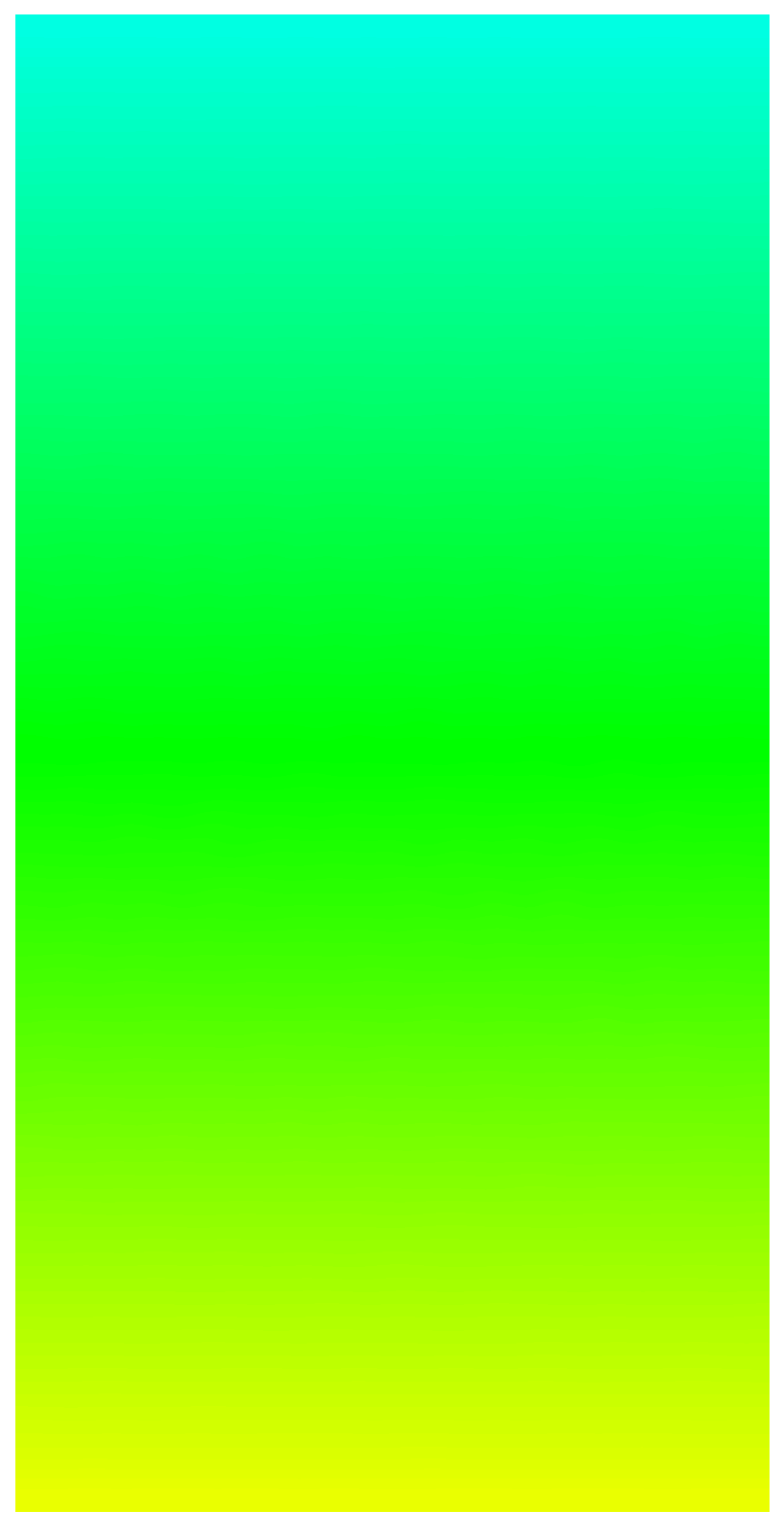}
\includegraphics[width=0.19\textwidth]{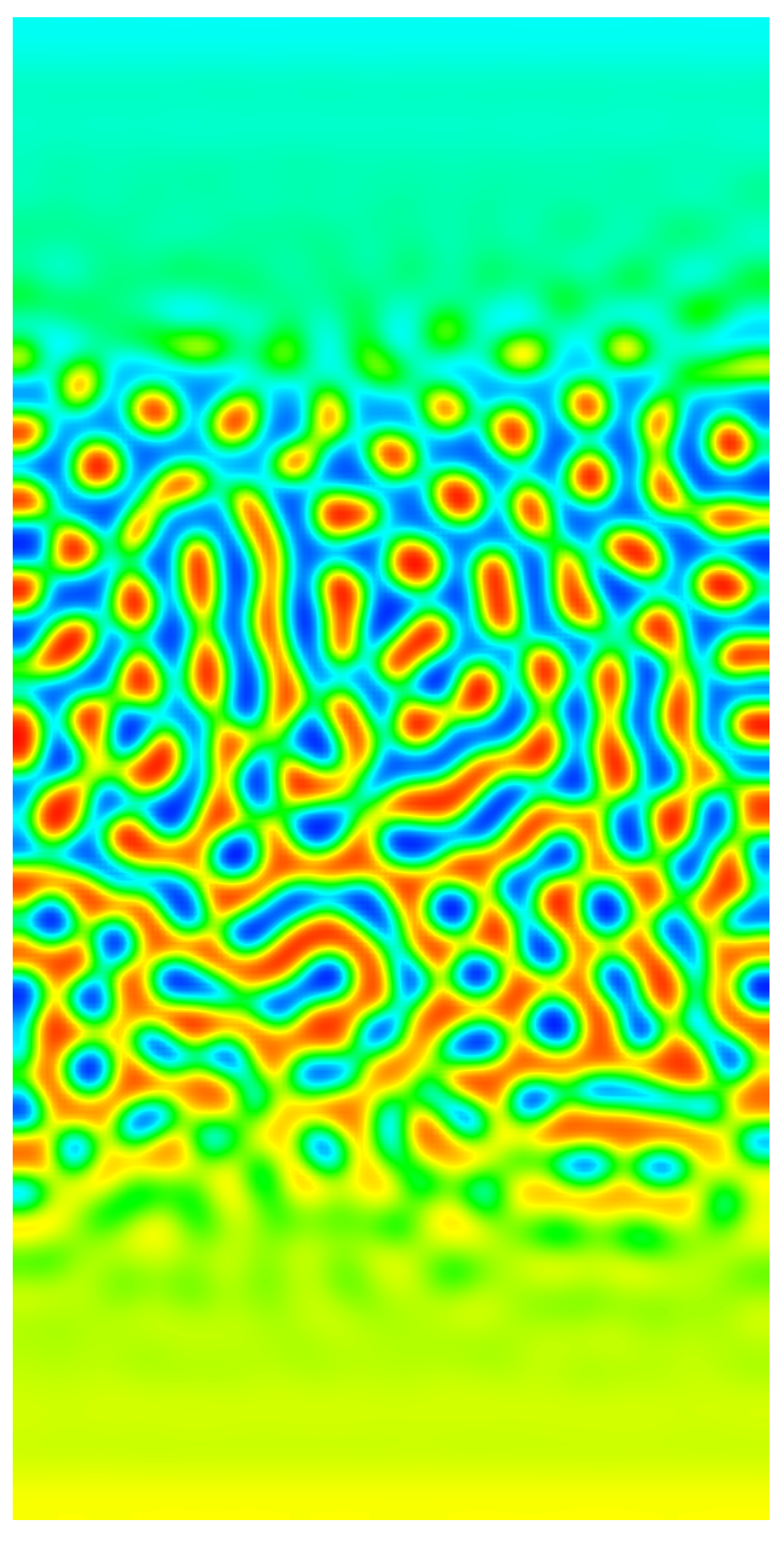}
\includegraphics[width=0.19\textwidth]{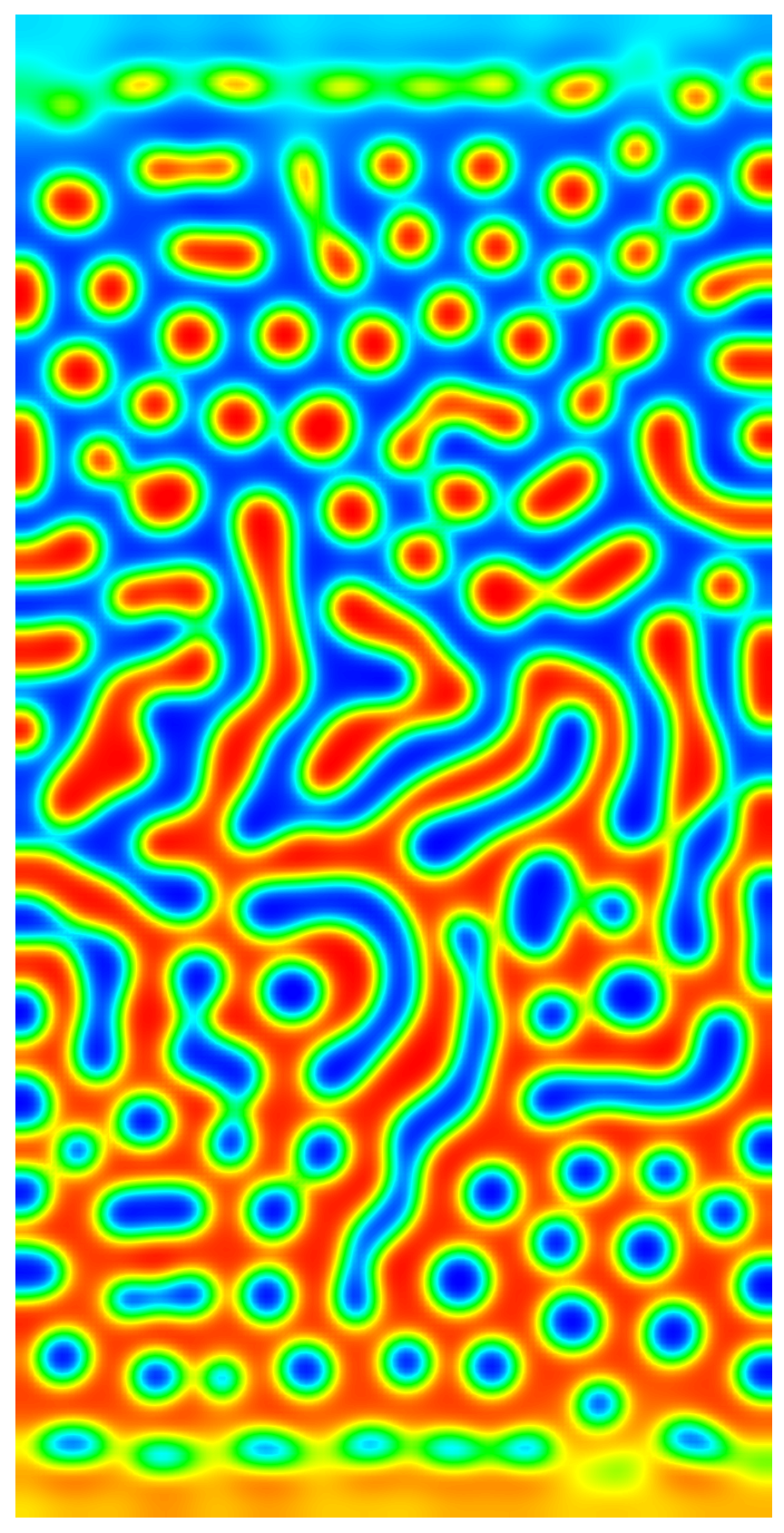}
\includegraphics[width=0.19\textwidth]{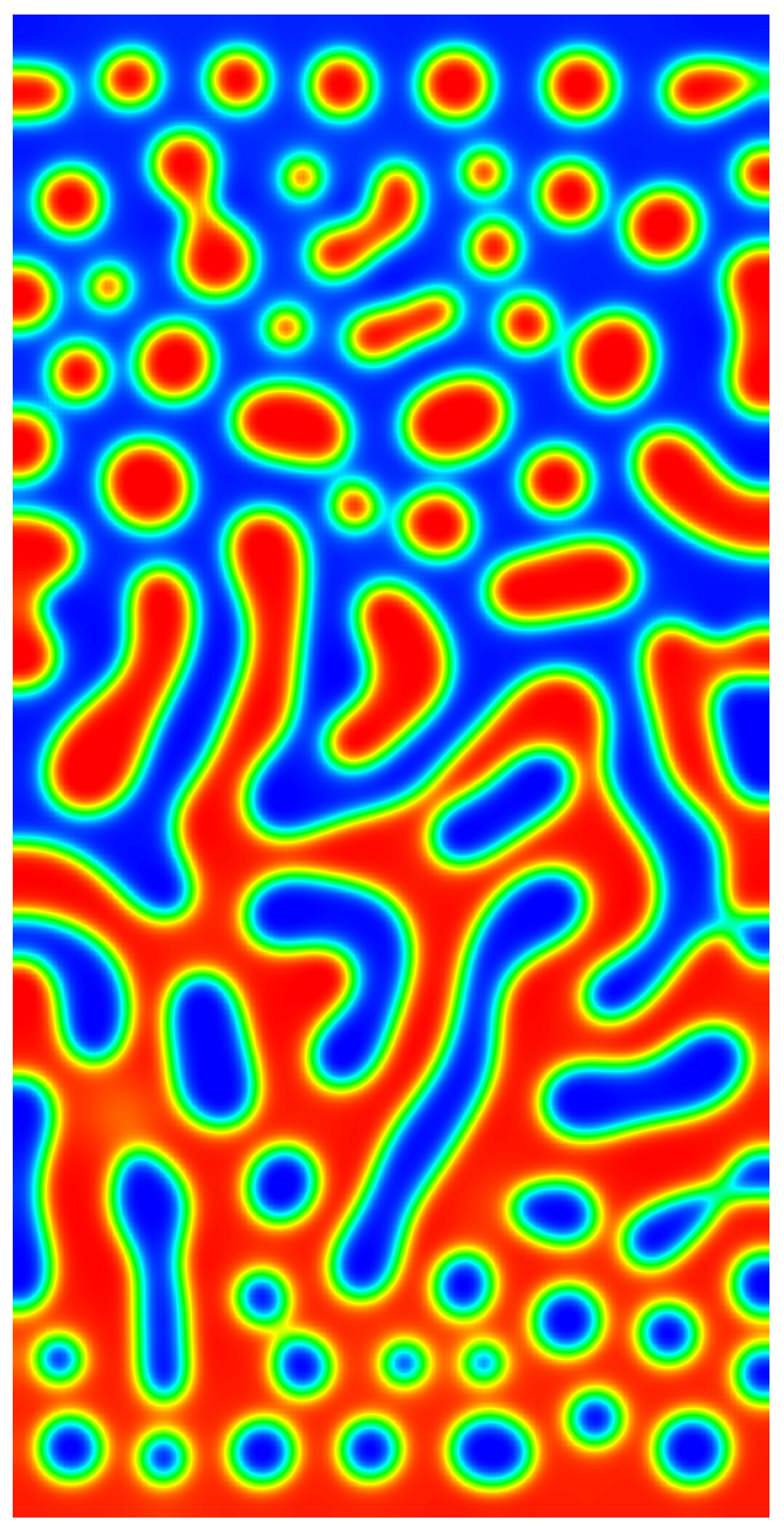}
\includegraphics[width=0.19\textwidth]{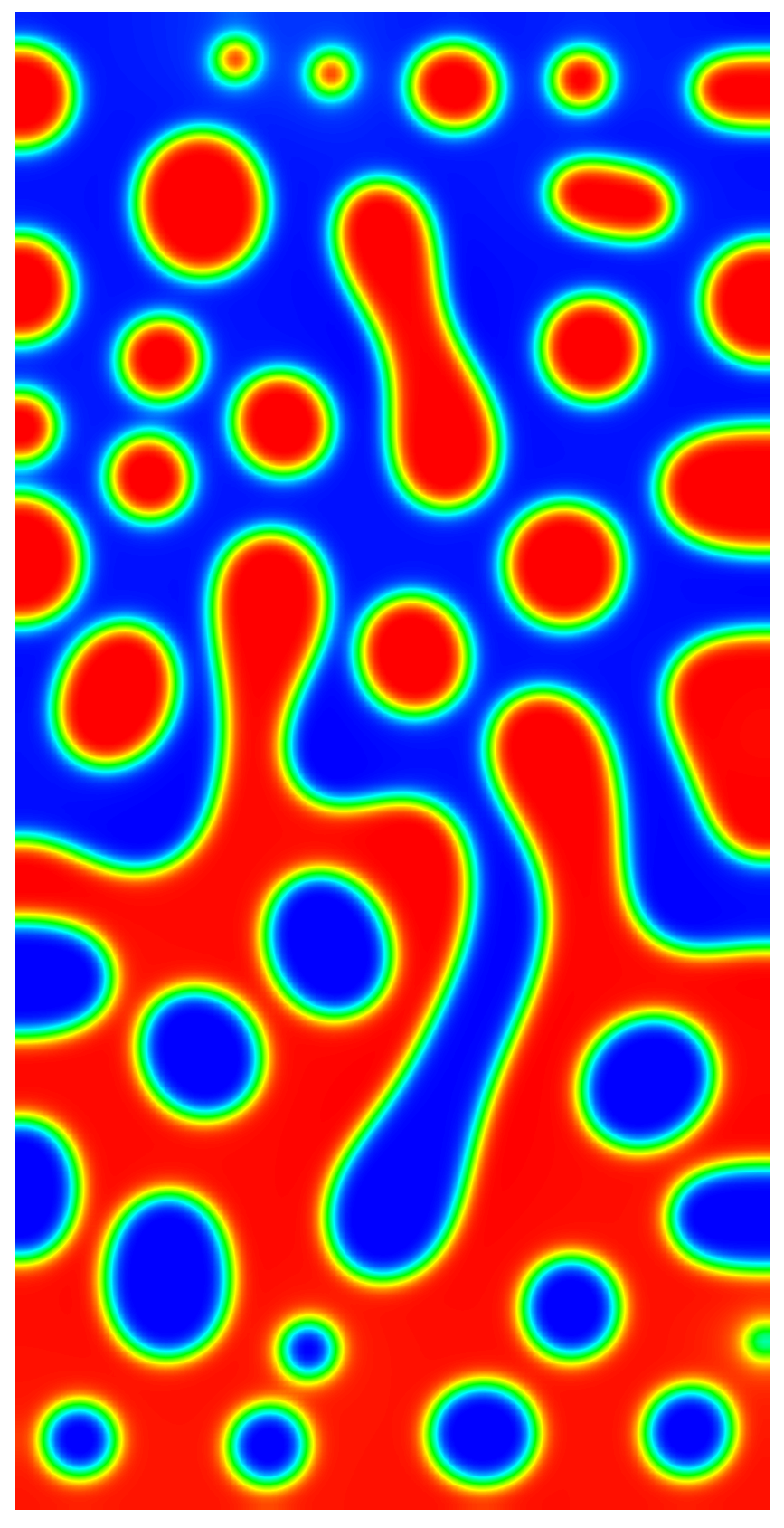}
}

\caption{Time evolution of phase variables with different surface tension. In (a) the profiles of $\phi$ at various times are shown for the case $\gamma=0.1$; (b) the profiles of $\phi$ at various times are shown for the case $\gamma=0.01$.}
\label{fig:coarsening-gam}
\end{figure}

Furthermore, the velocity fields for both cases are shown in Figure \ref{fig:coarsening-velo}. It is observed that the velocity field has a larger magnitude at the regions that changing rapidly, which means the hydrodynamics (kinetic energy) is interacting with the surface tension (Helmholtz free energy).

\begin{figure}[H]
\center
\subfigure[velocity field at $t=2, 6.5$ for Figure \ref{fig:coarsening-gam}(a)]{
\includegraphics[width=0.24\textwidth]{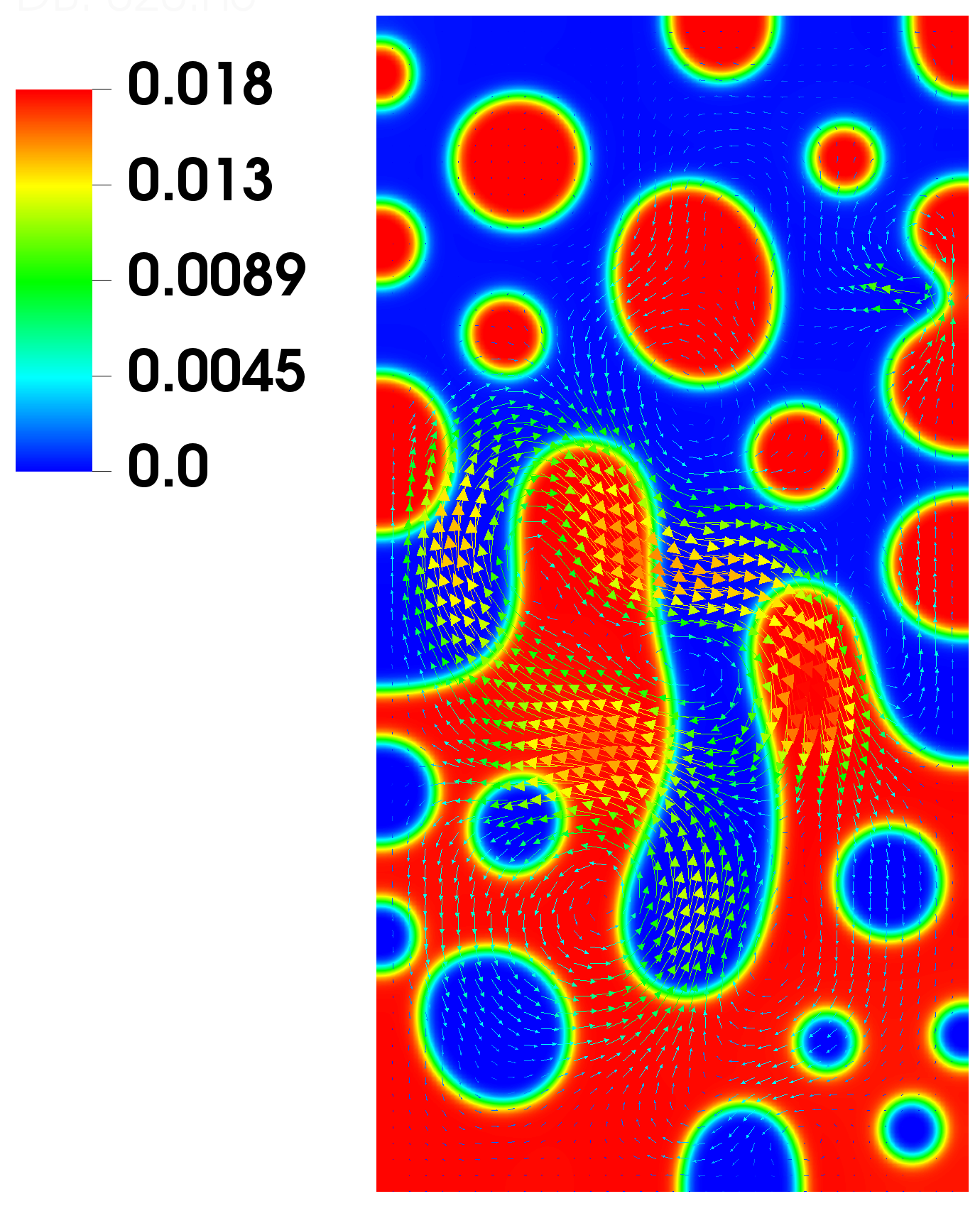}
\includegraphics[width=0.24\textwidth]{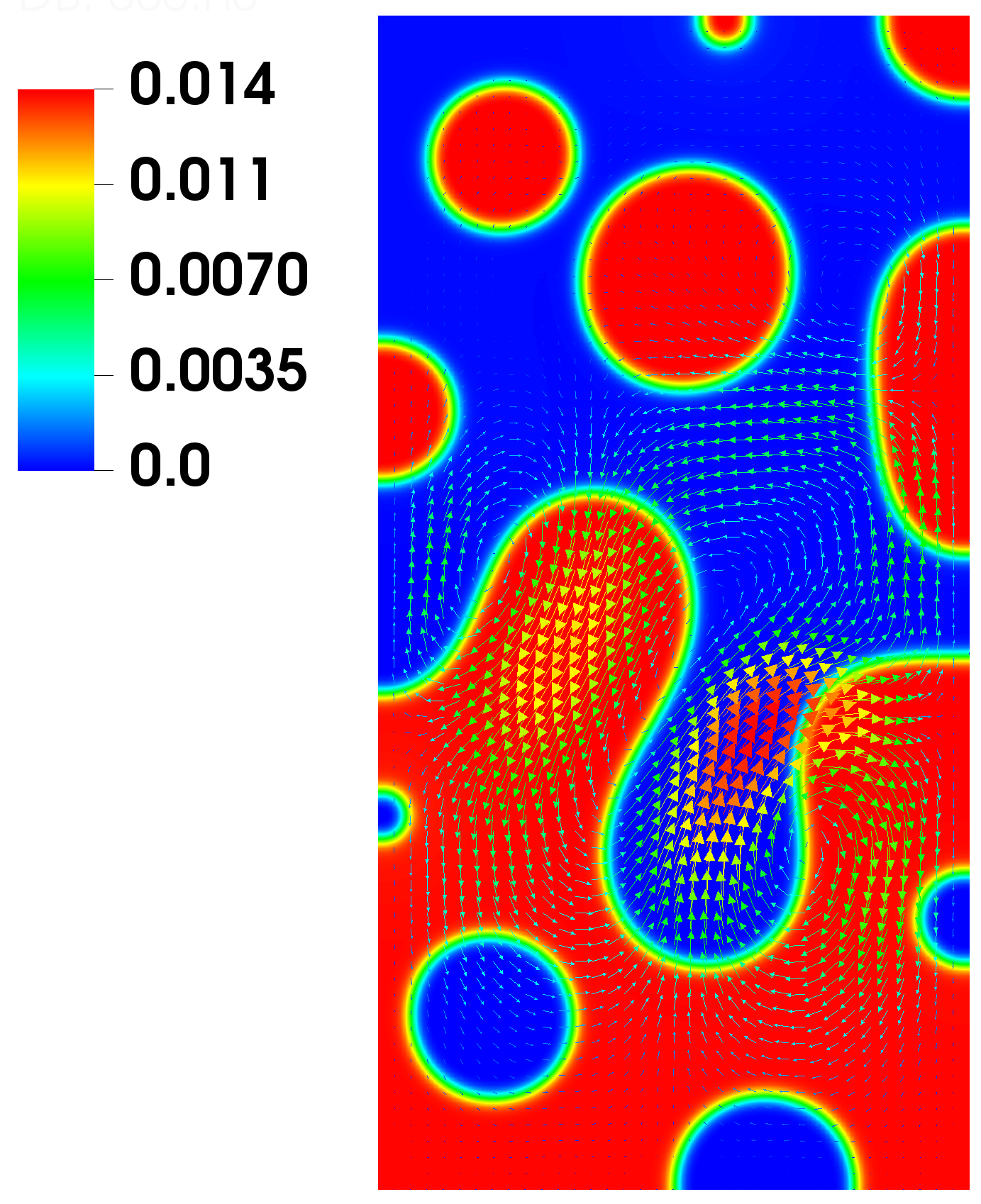}}
\subfigure[velocity field at $t=2, 6.5$ for Figure \ref{fig:coarsening-gam}(b)]{
\includegraphics[width=0.24\textwidth]{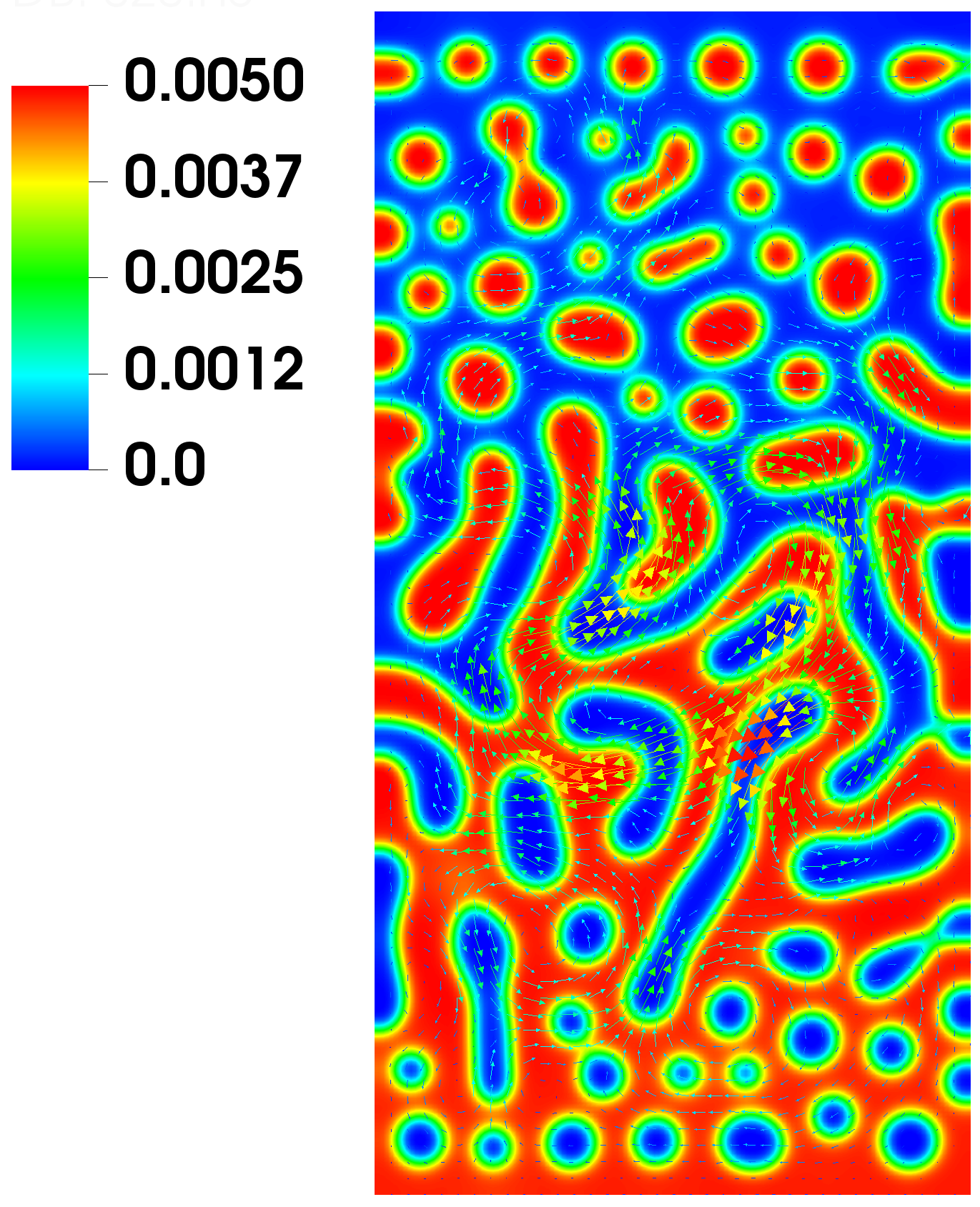}
\includegraphics[width=0.24\textwidth]{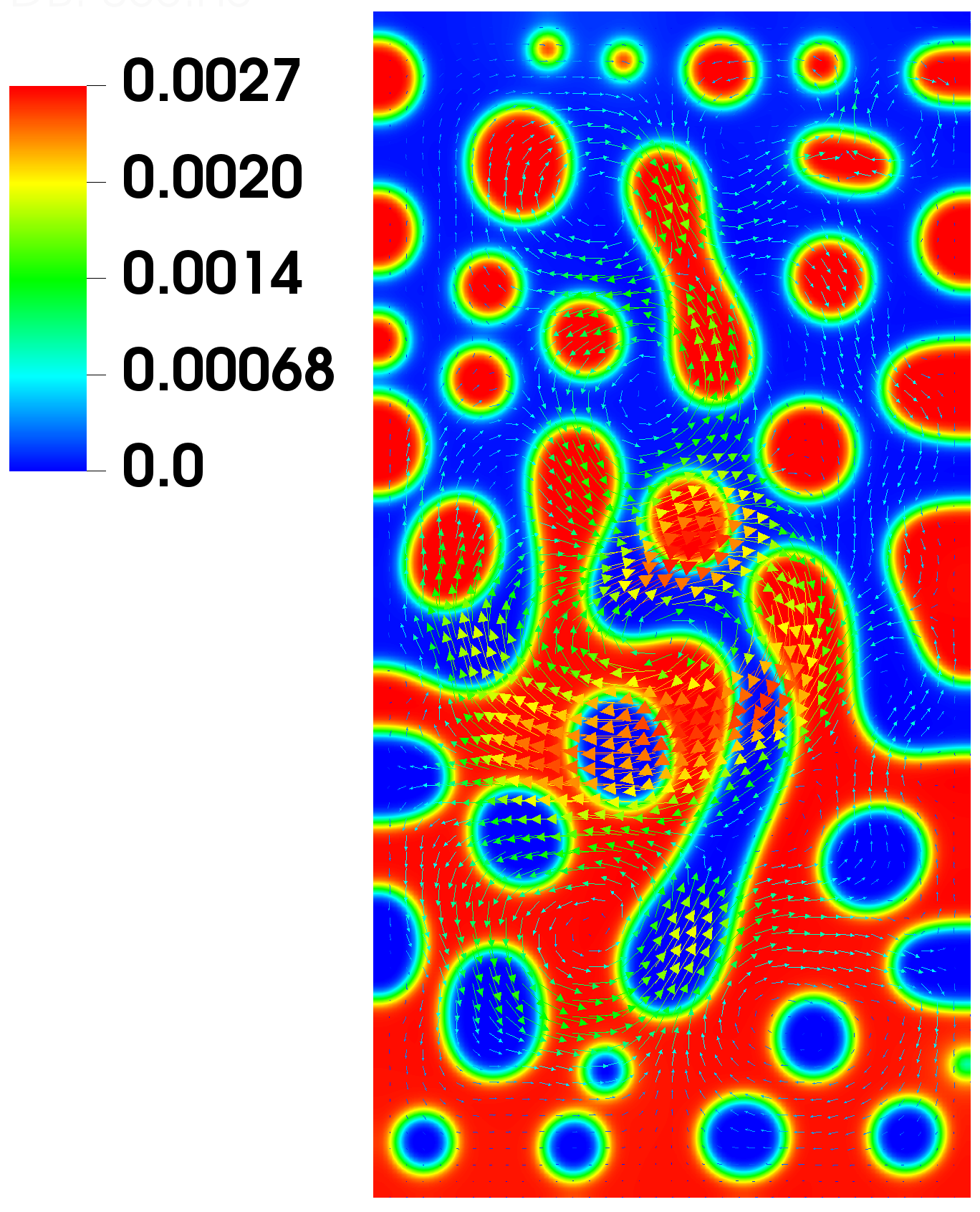}}
\caption{Velocity field for the coarsening dynamics in Figure \ref{fig:coarsening-gam}.}
\label{fig:coarsening-velo}
\end{figure}

Meanwhile, the energy evolution for both cases are summarized in Figure \ref{fig:coarsening-E}. We observe that when the surface tension is high, the coarsening changes faster. This is agreeable with the energy dissipation rate in \eqref{eq:energy-law-continous}.
\begin{figure}[H]
\center 
\subfigure[Energy evolution for Figure \ref{fig:coarsening-gam}(a)]{\includegraphics[width=0.45\textwidth]{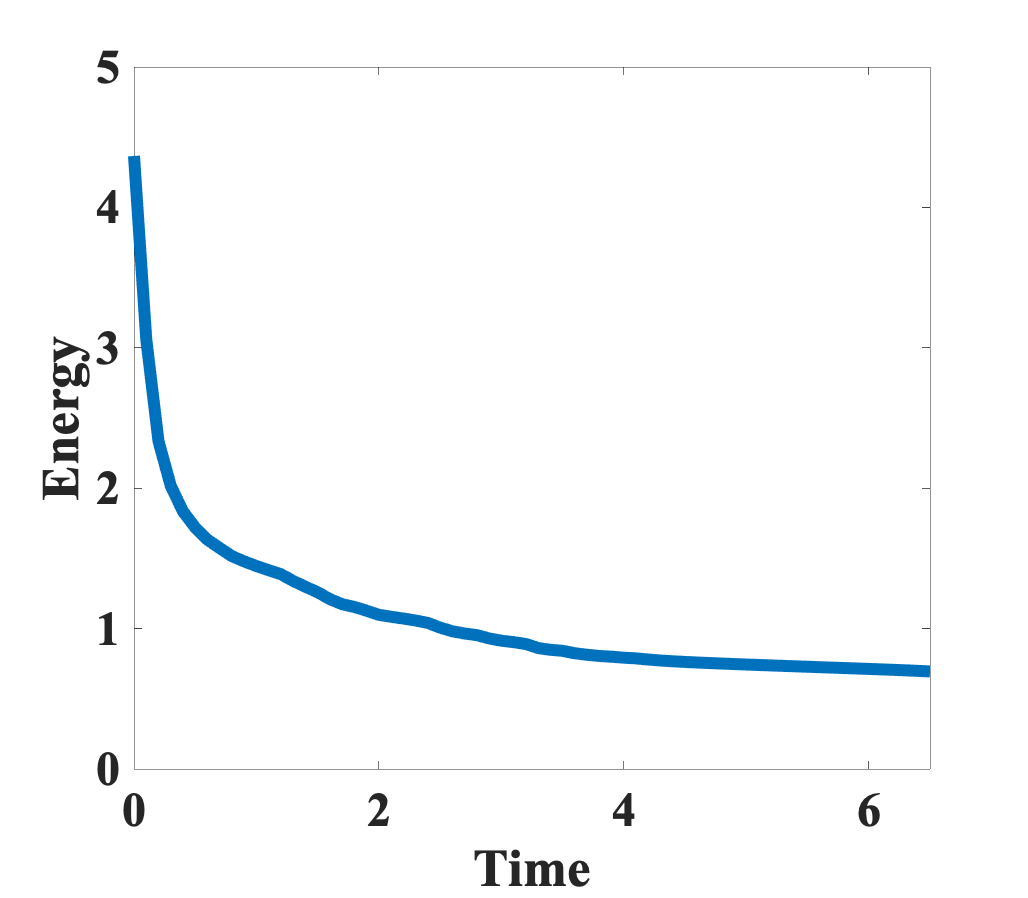}}
\subfigure[Energy evolution for Figure \ref{fig:coarsening-gam}(b)]{\includegraphics[width=0.45\textwidth]{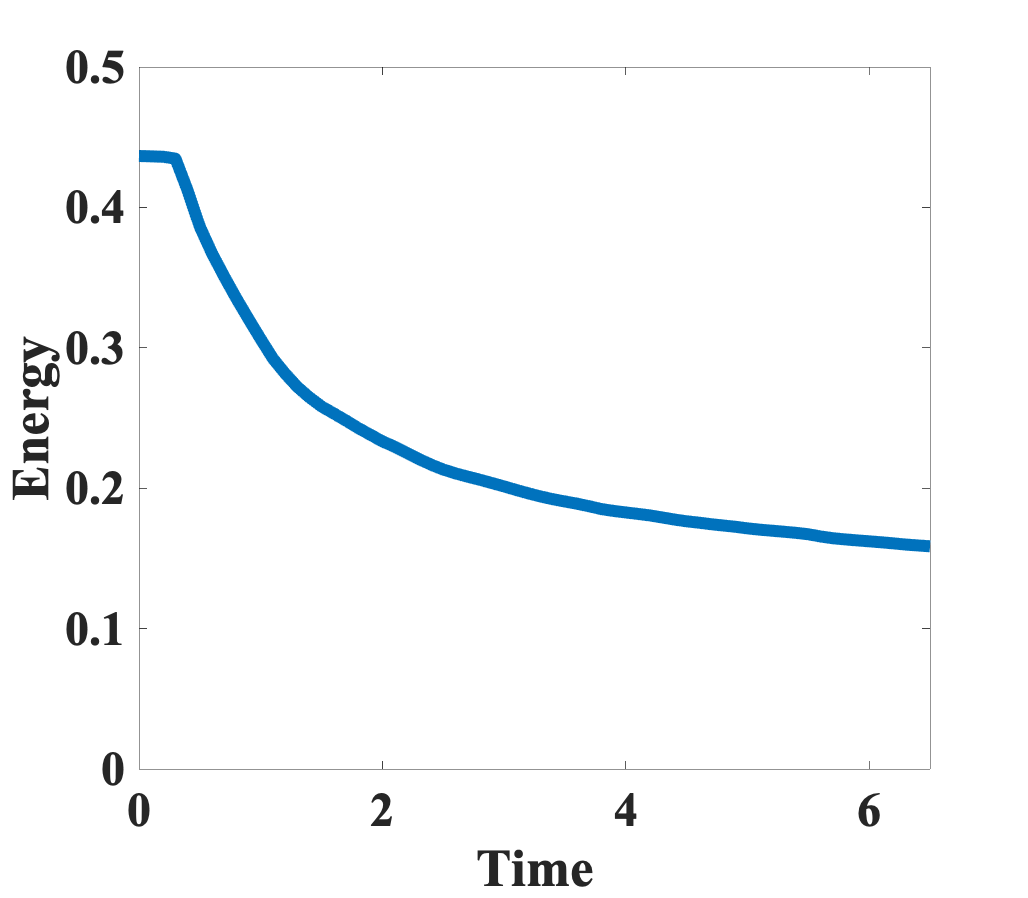}}
\caption{The time evolution of the energy for the coarsening dynamics in Figure \ref{fig:coarsening-gam}.}
\label{fig:coarsening-E}
\end{figure}

\subsection{Lid-driven Cavity}
In this case, we investigate the lid-driven cavity problem. The domain is set up as $\Omega=[0, L_x] \times [0, L_y]$ with $L_x=L_y=1$, with shear on the top. This is related with the zero Dirichlet boundary condition for the velocity at the boundary, except at $y=1$ for which we propose
$\bu|_{y=1} = (1, 0)$. We choose the initial condition for the phase variable $$\phi(x, y, t=0) = \tanh \frac{ r - \sqrt{(x-0.5L_x)^2 + (y-0.5L_y)^2}}{2\varepsilon},$$ with $r=0.15$.
The boundary condition for the phase variable $\phi$ remains the same. We use the following parameters $\rho=1$, $\eta = 1$, $\lambda =0.01$, $\varepsilon=0.01$ and we use various surface tension parameter $\gamma$. We use mesh sizes $Nx=Ny=128$. The profiles of $\phi$ at various times are summarized in Figure \ref{fig:Cavity}. We observe that our proposed numerical algorithms work well to accurately solve the Cahn-Hilliard-Navier-Stokes system, and investigate complicated two phase fluid flow.

\begin{figure}[H]
\subfigure[$\phi$ at $t=0, 1, 2,3$]{
\includegraphics[width=0.24\textwidth]{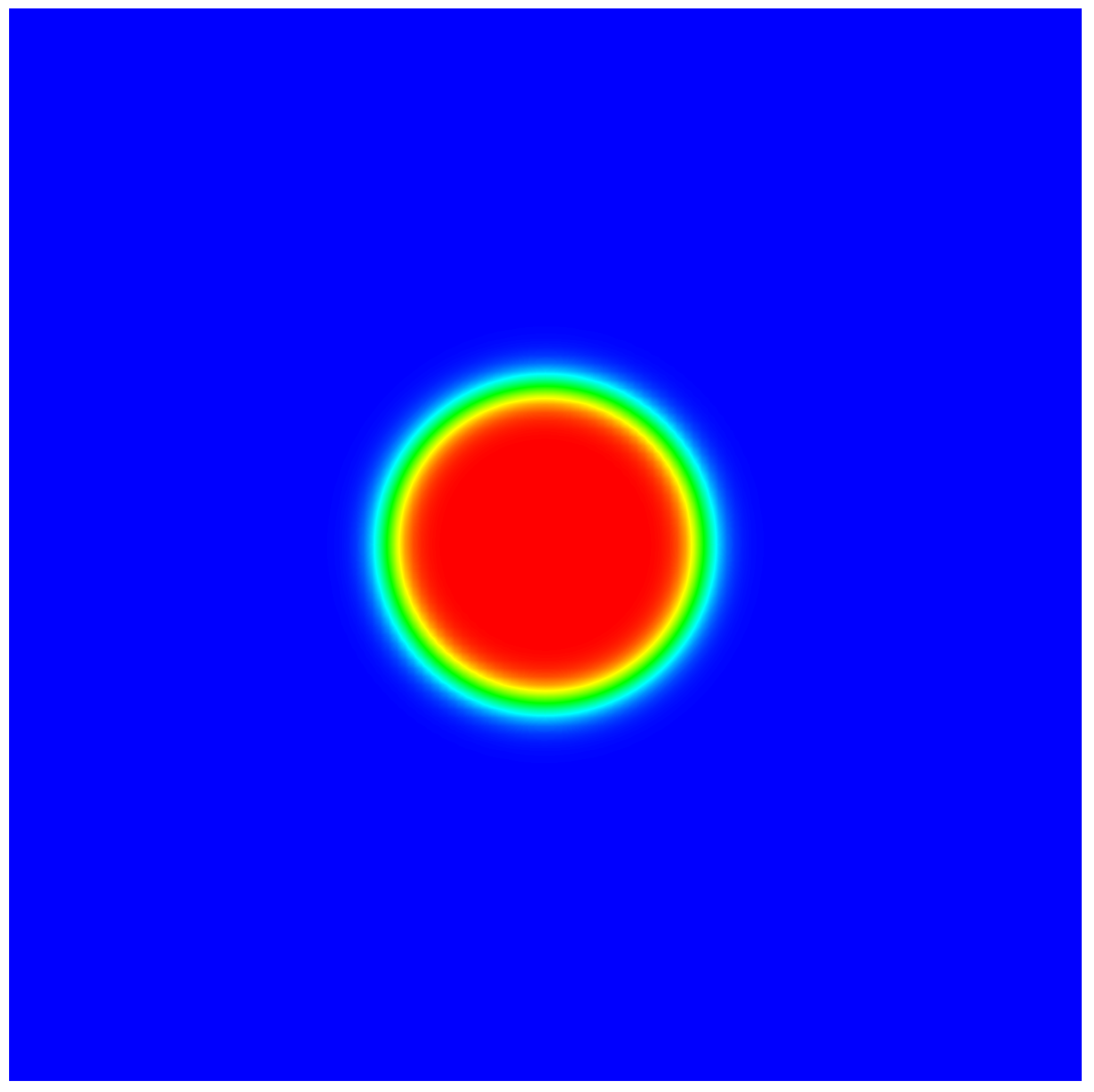}
\includegraphics[width=0.24\textwidth]{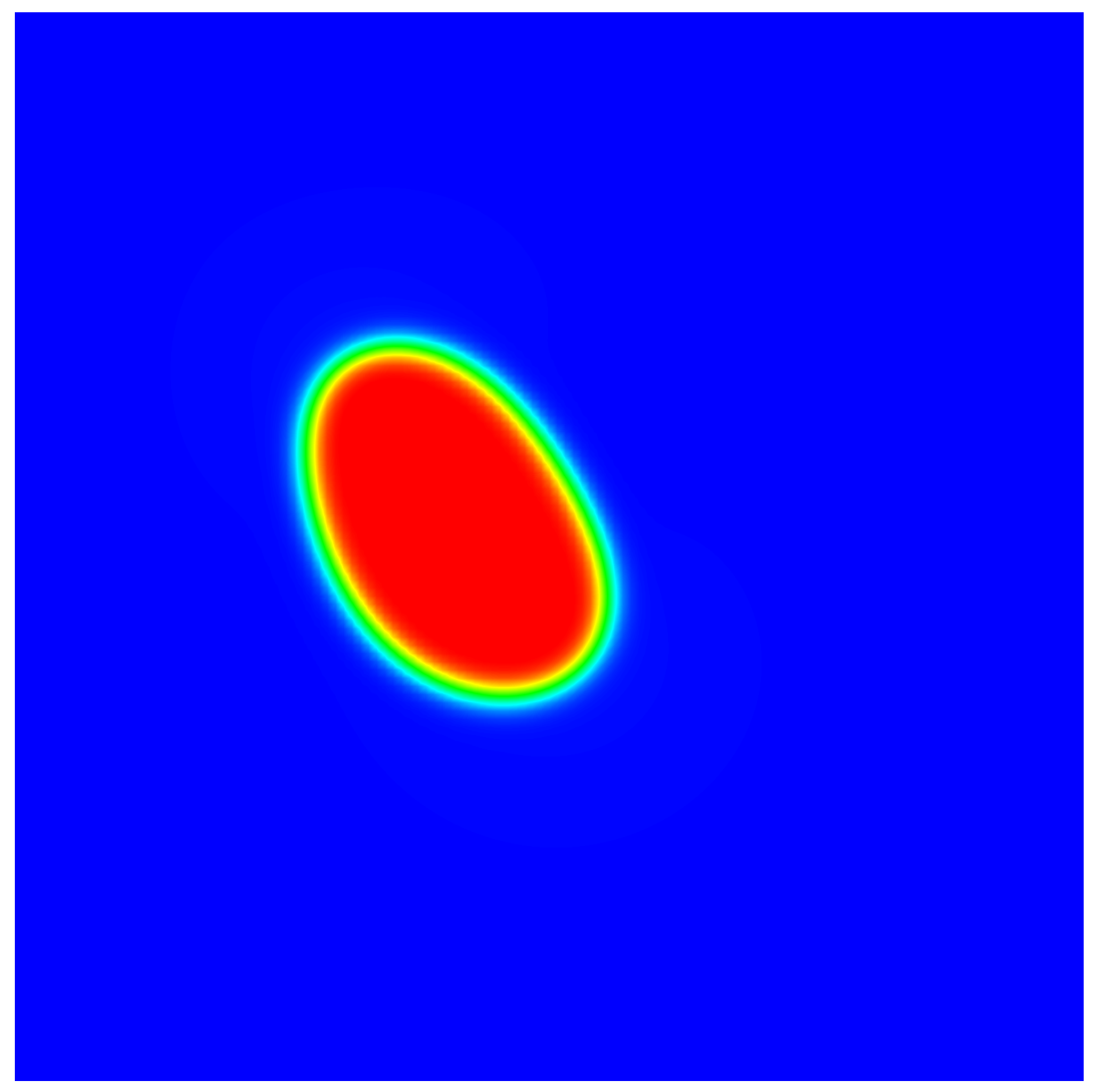}
\includegraphics[width=0.24\textwidth]{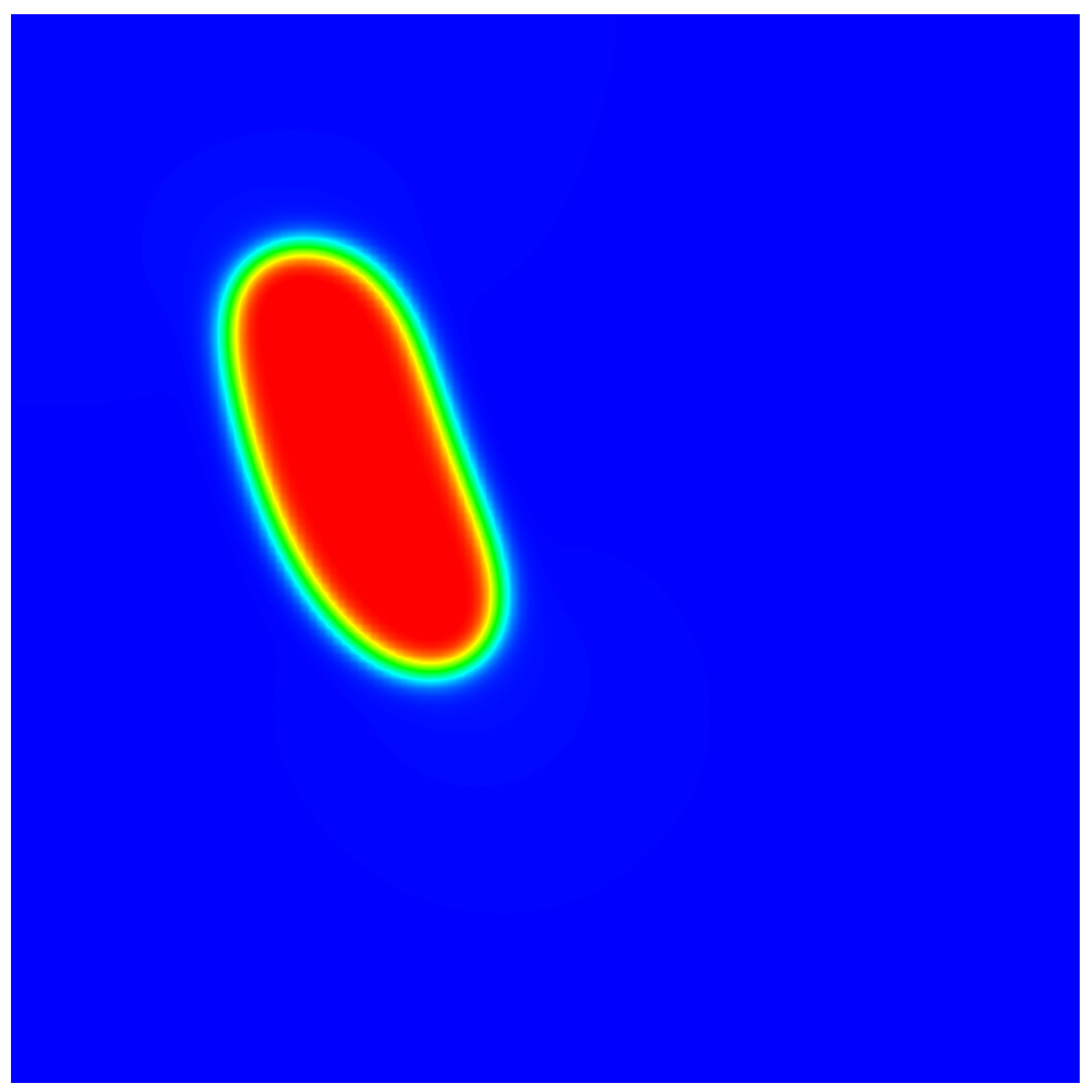}
\includegraphics[width=0.24\textwidth]{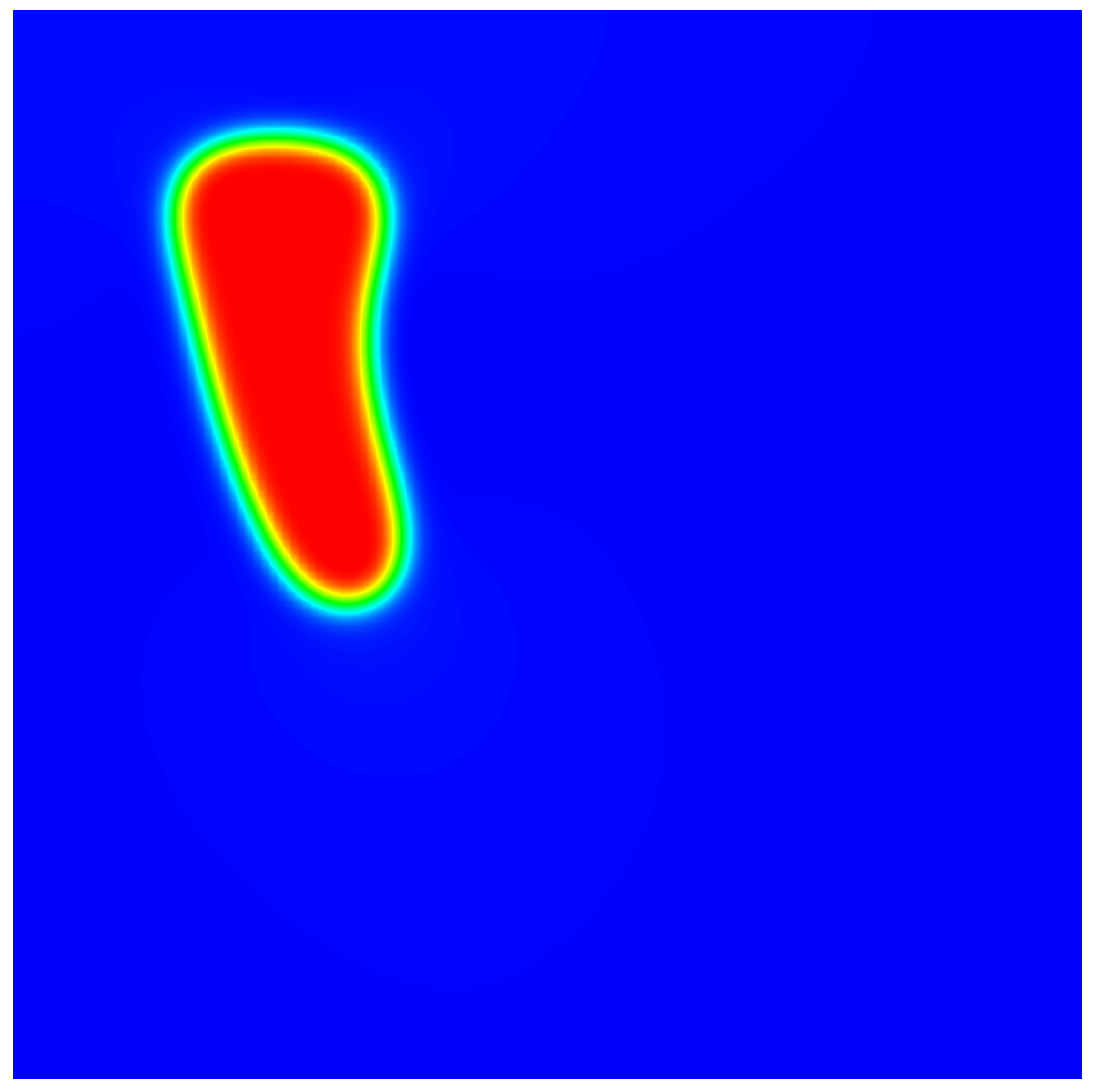}}

\subfigure[$\phi$ at $t=5,8,10,11$]{
\includegraphics[width=0.24\textwidth]{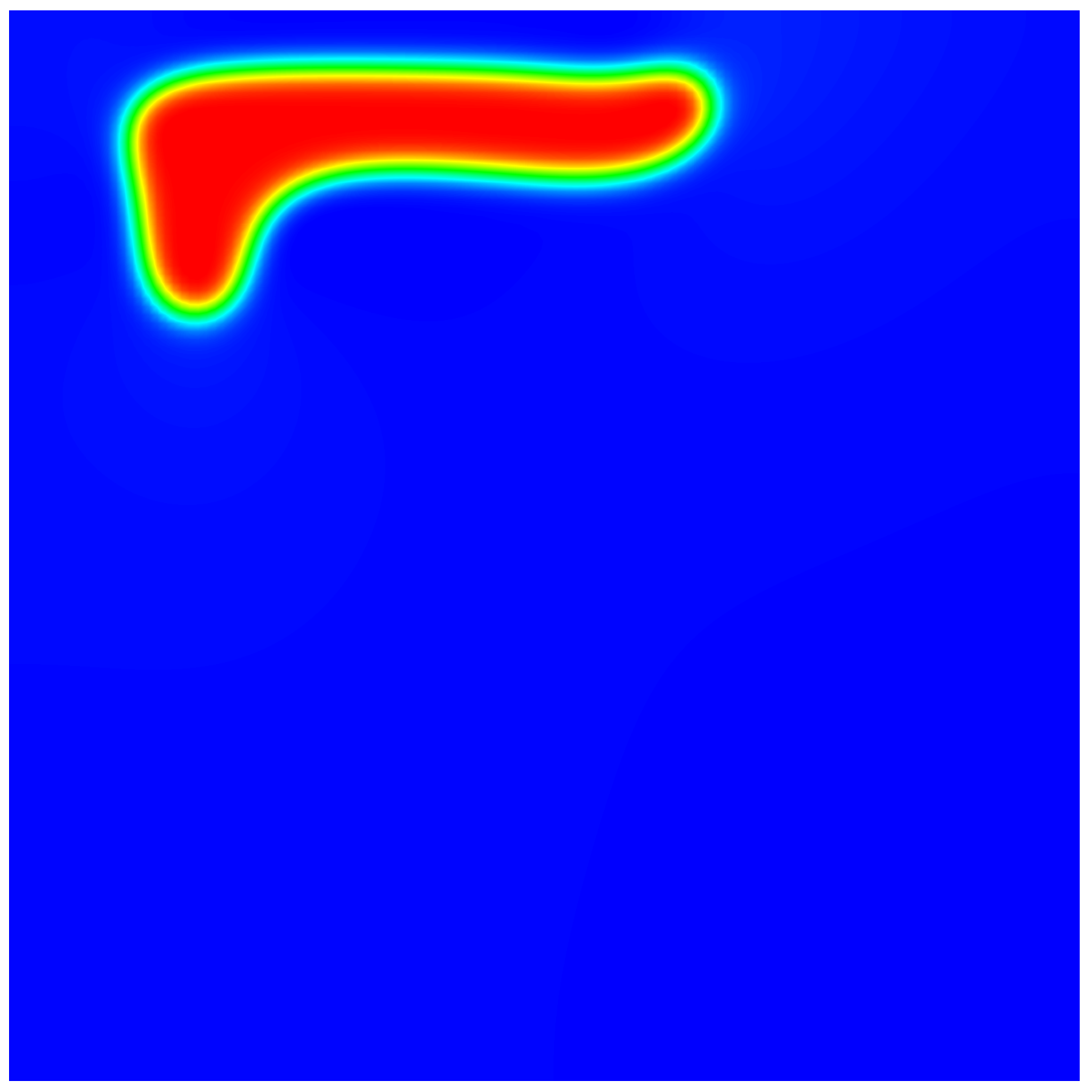}
\includegraphics[width=0.24\textwidth]{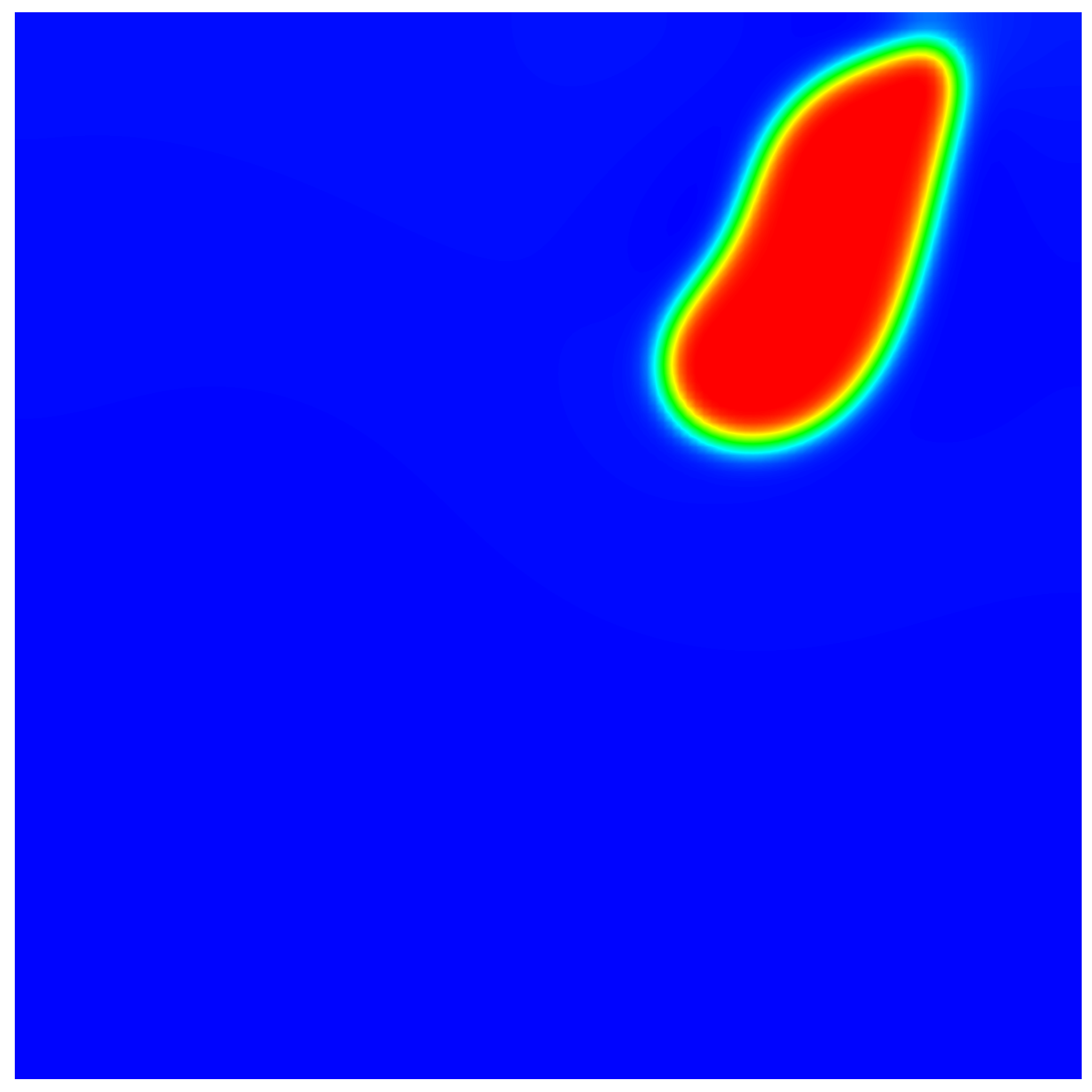}
\includegraphics[width=0.24\textwidth]{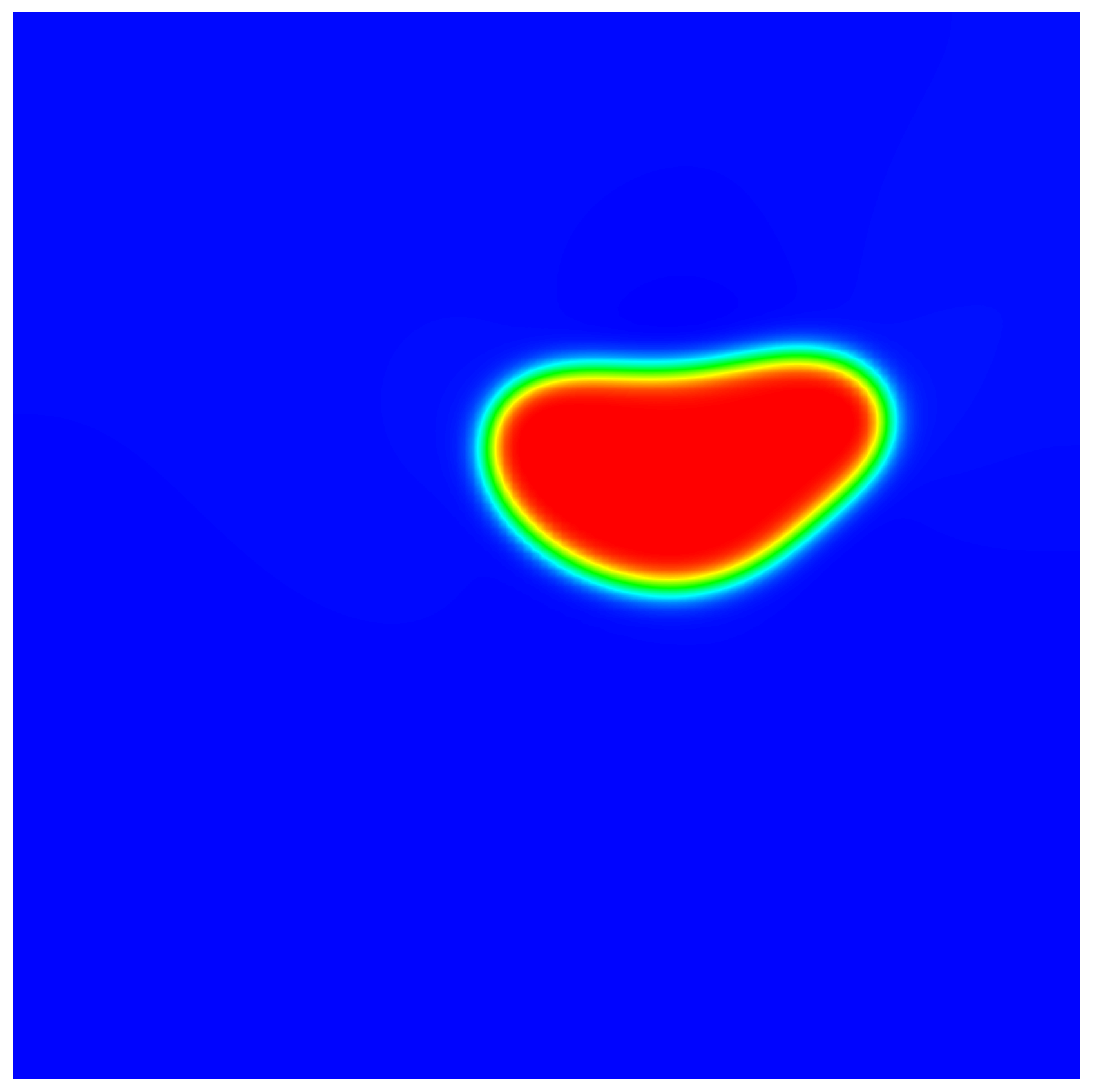}
\includegraphics[width=0.24\textwidth]{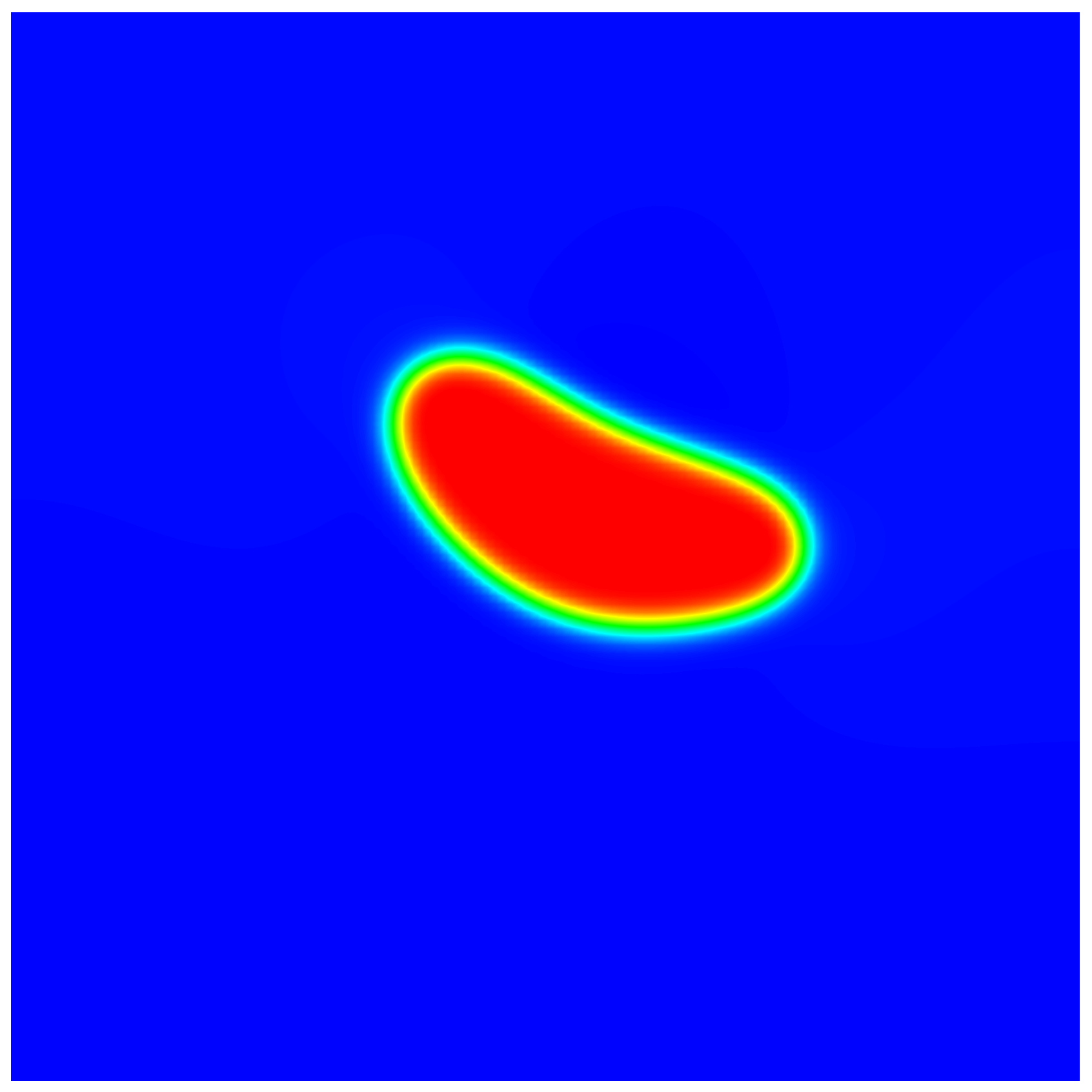}}
\caption{drop moving in  a lid-driven cavity flow. In this figure, the profiles of the phase variable $\phi$ are various times are shown. The drop is driven by the lid-driven cavity flow.}
\label{fig:Cavity}
\end{figure}

\section{Conclusion} \label{sect:conclusion}
How to design decoupled numerical algorithms for the well-known Cahn-Hilliard-Navier-Stokes (CHNS) system has been a long-standing problem. Many attempts are documented in published literature. So far, only first-order decoupled and energy stable schemes for the CHNS system is available.  Some recent attempts on designing second-order decoupled schemes using the SAV strategy also showing promising results. However, the schemes resulted from the SAV strategy only preserve a modified energy law, where the discrete laws are formulated using the auxiliary variables. Its connection with the discrete energy law using the original variables is not clear. 

In this paper, we are the first group to come up with a second-order decoupled numerical scheme for the CHNS that is energy stable. Our idea is mainly based on a reformulation of the CHNS system into a constraint gradient flow form such that the operator splitting techniques can be utilized without destroying the discrete energy laws. With this idea, we propose three variants of decoupled and second-order numerical algorithms. All of them are shown to be energy stable. They are efficient since only several simple elliptic equations shall be solved at every time step,  instead of solving a fully-coupled system. Their second-order accuracy is verified numerically. Furthermore, we also conduct several benchmark numerical simulations to justify the effectiveness of the proposed decoupled numerical schemes. 

Meanwhile, the idea introduced in this paper is not limited to decoupling the Navier-Stokes equation and the Cahn-Hilliard equation. It is widely applicable to a variety of hydrodynamics phase-field models. Its further extensions to these models and other thermodynamic-hydrodynamic models will be pursued in our later research.

\section{Acknowledgments}
Jia Zhao would like to acknowledge the support from National Science Foundation with grant NSF-DMS-1816783.
Jia Zhao would also like to acknowledge NVIDIA Corporation for the donation of a Quadro P6000 GPU for conducting some of the numerical simulations in this paper.

\bibliographystyle{unsrt}

\end{document}